\DeclareMathOperator{\cl}{C\ell} %real Clifford
\DeclareMathOperator{\ccl}{\mathbb{C}\ell} %complex Clifford
\DeclareMathOperator{\End}{End}
\DeclareMathOperator{\Op}{Op}
\DeclareMathOperator{\Spin}{Spin}
\DeclareMathOperator{\symb}{symb}
\DeclareMathOperator{\Ind}{Ind}
\DeclareMathOperator{\proj}{proj}
\DeclareMathOperator{\ad}{ad}
\DeclareMathOperator{\id}{id}
\DeclareMathOperator{\Pin}{Pin}
\DeclareMathOperator{\Or}{O}
\DeclareMathOperator{\SO}{SO}
\newcommand{\fsl}[1]{{\slashed{#1}}} % For Dirac op, spinors disctributions
\numberwithin{equation}{section}
\newtheorem{theor}{Theorem}[section]
\newtheorem{prop}[theor]{Proposition}
\newtheorem{lem}[theor]{Lemma}
\begin{document}
%\allowdisplaybreaks

\newcommand{\arXivNumber}{2012.09625}

\renewcommand{\PaperNumber}{049}

\FirstPageHeading

\ShortArticleName{Symmetry Breaking Differential Operators for Tensor Products}

\ArticleName{Symmetry Breaking Differential Operators\\ for Tensor Products of Spinorial Representations}

\Author{Jean-Louis CLERC and Khalid KOUFANY}
\AuthorNameForHeading{J.-L.~Clerc and K.~Koufany}
\Address{Universit\'e de Lorraine, CNRS, IECL, F-54000 Nancy, France}
\Email{\href{mailto:jean-louis.clerc@univ-lorraine.fr}{jean-louis.clerc@univ-lorraine.fr}, \href{mailto:khalid.koufany@univ-lorraine.fr}{khalid.koufany@univ-lorraine.fr}}

\ArticleDates{Received January 12, 2021, in final form May 06, 2021; Published online May 13, 2021}

\Abstract{Let $\mathbb S$ be a Clifford module for the complexified Clifford algebra $\ccl(\mathbb R^n)$, $\mathbb S'$ its dual, $\rho$ and $\rho'$ be the corresponding representations of the spin group $\Spin(n)$. The group $G= \Spin(1,n+1)$ is a (twofold) covering of the conformal group of $\mathbb R^n$. For $\lambda, \mu\in \mathbb C$, let $\pi_{\rho, \lambda}$ (resp.~$\pi_{\rho',\mu}$) be the spinorial representation of $G$ realized on a (subspace of) $C^\infty(\mathbb R^n,\mathbb S)$ (resp.~$C^\infty(\mathbb R^n,\mathbb S')$). For $0\leq k\leq n$ and $m\in \mathbb N$, we construct a symmetry breaking differential operator $B_{k;\lambda,\mu}^{(m)}$ from $C^\infty(\mathbb R^n \times \mathbb R^n,\mathbb{S}\,\otimes\, \mathbb{S}')$ into $C^\infty(\mathbb R^n, \Lambda^*_k(\mathbb R^n) \otimes \mathbb{C})$ which intertwines the representations $\pi_{\rho, \lambda}\otimes \pi_{\rho',\mu} $ and $\pi_{\tau^*_k,\lambda+\mu+2m}$, where $\tau^*_k$ is the representation of $\Spin(n)$ on~the space $\Lambda^*_k(\mathbb R^n) \otimes \mathbb{C}$ of complex-valued alternating $k$-forms on $\mathbb{R}^n$.}

\Keywords{Clifford algebra; spinors; tensor product; conformal analysis; symmetry breaking differential operators}

\Classification{43A85; 58J70; 33J45}\vspace{-1ex}

\section{Introduction}\vspace{-1ex}

In the last years there had been a lot of work on \emph{symmetry breaking differential operators} (SBDO for short), initiated by a program designed by T. Kobayashi (see~\cite{k} and~\cite{kp16} for more information on the subject). The present authors have already contributed to the construction of some SBDO (see~\cite{bc, bck2, bck1, c19}). In the present paper, we construct such operators in the context of tensor product of two spinorial principal series representations of the conformal spin group of~$\mathbb R^n$ (a two-fold covering of the Lorentz group $\SO_0(1, n+1))$.

The method we follow has been named \emph{source operator method} by the first author (see~\cite{c19} for a systematic presentation). An essential ingredient is the \emph{Knapp--Stein operator} for the spinorial series, which is presented along new lines in the \emph{ambient space} approach (Section~\ref{S33}). The construction of the source operator requires some Fourier analysis on $\mathbb R^n$ and in this paper we develop an approach through an \emph{ad hoc} symbolic calculus, which eases the computations (Section~\ref{S43}). As a result, an explicit expression for the source operator (of degree $4$ with polynomial coefficients) is obtained (see~\eqref{Elambdamu}).

Once the source operator is computed, the sequel is standard and yields a family of constant coefficients bi-differential operators which are covariant for the action of the conformal spinor group. In complement, a recurrence formula on these operators is obtained. An explicit formula is obtained for the \lq\lq simplest\rq\rq\ SBDO (scalar-valued and of degree~2).

When the dimension $n$ is equal to $1$, the operators thus constructed coincide with the classical
\emph{Rankin--Cohen brackets} of even degree, see~\eqref{B_0dim1}.

\section{Clifford modules}\vspace{-1ex}

This section contains what is necessary to know about Clifford algebras and their modules in~order to read this article, without any claim to originality. We use~\cite{dss} and~\cite{bgv} as main references.

\subsection{The Clifford algebra and the spin group}

Let $(E, \langle \cdot\,,\,\cdot\rangle)$ be a Euclidean vector space of dimension $n$. The \emph{Clifford algebra} $\cl(E)$ is the algebra over $\mathbb R$ generated by the vector space $E$ and the relations
\begin{gather*}
xy +yx=-2\langle x,y\rangle\qquad \text{for}\quad x, y \in E,
\end{gather*}
where $-2\langle x,y\rangle$ is identified with $-2\langle x,y\rangle \mathbf{1}$ and $\mathbf{1}$ being the algebra identity element.

There is a natural action of the Clifford algebra $\cl(E)$ on the exterior algebra $\Lambda(E)$. For $x\in E $ and $\omega\in \Lambda(E)$, let $\varepsilon(x)\omega$ be the exterior product of $x$ with $\omega$, and let $\iota(x)\omega$ be the contraction of $\omega$ with the covector $\langle x, \cdot\rangle$. The Clifford action of a vector $x\in E$ on $\Lambda(E)$ is~defined by
\begin{gather*}
c(x) \omega = \varepsilon(x) \omega - \iota(x) \omega.
\end{gather*}
The classical formula
\begin{gather*}\varepsilon(x) \iota(y) + \iota(x) \varepsilon(y) = \langle x,y\rangle
\end{gather*}
implies that
\begin{gather*}
c(x)c(y)+c(y)c(x) = -2\langle x,y\rangle
\end{gather*}
and by the universal property of the Clifford algebra, the action $c$ can be extended to $\cl(E)$.

Associated to this action is the \emph{symbol map} $\sigma \colon \cl(E) \to \Lambda(E)$ given by
\begin{gather*}
\sigma(a) = c(a) \mathbf{1}\qquad \text{for}\quad a \in \cl(E).
\end{gather*}
The symbol map can be shown to be an isomorphism, and its inverse $\gamma \colon \Lambda(E) \to \cl(E)$ is called the \emph{quantization map}, see~\cite{bgv} for more information. For $I=\{ i_1, i_2,\dots,i_k\}$ where $1\leq i_1<i_2<\dots<i_k\leq n$, let $e_I = e_{i_1}\wedge e_{i_2}\wedge\dots\wedge e_{i_k}$. Then the family $\{e_I\}$, where $I$ runs through all possible subsets of $\{1,2,\dots,n\}$ form a basis of $\Lambda(E)$. In this basis the quantization map is given by
\begin{gather}\label{gamma}
\gamma(e_I) =e_{i_1}e_{i_2}\cdots e_{i_k}.
\end{gather}

The orthogonal group $\Or(E)$ acts on E and there is a natural extension of this action both to~$\Lambda(E)$ and to $\cl(E)$. Both $\sigma$ and $\gamma$ are isomorphisms of $\Or(E)$-modules.

 The \emph{conjugation} $\alpha$ is the unique anti-involution of $\cl(E)$
such that for $x\in E$, $\alpha(x) = -x$. Notice that
\begin{gather*}
x\in E,\qquad \vert x\vert = 1 \qquad \text{implies} \qquad \alpha(x) = x^{-1}.
\end{gather*}
The {pin group} $\Pin(E)$ is defined as the multiplicative subset of $\cl(E)$ given by
\begin{gather*}
\Pin(E) = \{g\in \cl(E),\, g= x_1x_2\cdots x_{k},\, \vert x_j\vert = 1 \text{ for } 1\leq j\leq k\} ,
\end{gather*}
the inverse of the element $g=x_1x_2\cdots x_{k}$ being the element
 \begin{gather*}
g^{-1} = \alpha(g) .
\end{gather*}
The {spin group} $\Spin(E)$ is defined similarly as
\begin{gather*}
\Spin(E) = \{g\in \cl(E),\, g= x_1x_2\cdots x_{2k},\, \vert x_j\vert = 1 \text{ for } 1\leq j\leq 2k\}.
\end{gather*}
Let $x\in E$ such that $\vert x\vert = 1$. Then for any $y\in E$, $xyx^{-1}$ belongs to $E$ and
{\sloppy\begin{gather*}
xyx^{-1} = -s_x y,
\end{gather*}
where $s_x$ is the orthogonal symmetry with respect to the hyperplane perpendicular to $x$. As~a~con\-sequence, if $g\in \Pin(E)$, then for any $x\in E$, $gxg^{-1}\in E$ and the map $\tau_g \colon x\mapsto gxg^{-1}$ belongs to~$\Or(E)$. If moreover $g\in \Spin(E)$, then $\tau_g$ belongs to~$\SO(E)$.

}

\begin{prop} The map $g\mapsto \tau_g$ induces homomorphisms
\begin{gather*}
\tau \colon\ \Pin(E) \to \Or(E), \qquad \tau \colon\ \Spin(E) \to\SO(E),
\end{gather*}
 which are twofold coverings.
\end{prop}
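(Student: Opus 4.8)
The plan is to split the statement into three separate claims: that $g\mapsto\tau_g$ is a group homomorphism, that it is surjective onto $\SO(E)$ (resp.\ $\Or(E)$), and that its kernel is $\{\pm\mathbf 1\}$; once these are in place the covering assertion is purely formal. The homomorphism property is immediate: for $g,h\in\Pin(E)$ and $x\in E$ one has $(gh)x(gh)^{-1}=g\bigl(hxh^{-1}\bigr)g^{-1}$, so $\tau_{gh}=\tau_g\circ\tau_h$, and the inclusions $\tau_g(E)\subseteq E$ recalled just above make this meaningful. Continuity (in fact smoothness) of $\tau$ holds because conjugation is polynomial in the coordinates of $g\in\cl(E)$, while $\Pin(E)$ and $\Spin(E)$ are closed (indeed compact) subgroups of the group of units of $\cl(E)$, hence Lie groups.

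For surjectivity I would invoke the Cartan--Dieudonn\'e theorem: every element of $\Or(E)$ is a product $s_{x_1}\cdots s_{x_r}$ of at most $n$ hyperplane reflections through unit vectors, and it lies in $\SO(E)$ precisely when $r$ may be taken even. Since $\tau_x=-s_x$ for a unit vector $x$, a product of an \emph{even} number of generators $x_1,\dots,x_{2k}$ has the minus signs cancel in pairs, so $\tau_{x_1\cdots x_{2k}}=s_{x_1}\circ\cdots\circ s_{x_{2k}}$; letting the reflection word range over all even products shows $\tau\colon\Spin(E)\to\SO(E)$ is onto. For $\tau\colon\Pin(E)\to\Or(E)$ one argues the same way, using additionally a single extra generator (or the volume element $e_1\cdots e_n\in\Pin(E)$) to reach the determinant $-1$ component of $\Or(E)$; here one must keep track of the parity of $n$ together with the sign in $\tau_x=-s_x$, which is the one piece of genuinely delicate bookkeeping in the whole argument.

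The kernel is computed algebraically. If $g\in\Pin(E)$ satisfies $\tau_g=\id_E$, then $g$ commutes with every $x\in E$ and hence, as $E$ generates the algebra, with all of $\cl(E)$; thus $g$ lies in the centre. That centre is $\mathbb R\mathbf 1$ when $n$ is even and $\mathbb R\mathbf 1\oplus\mathbb R\,e_1\cdots e_n$ when $n$ is odd, but in the odd case the extra generator $e_1\cdots e_n$ is odd for the $\mathbb Z/2$-grading, so an element of $\Spin(E)\subseteq\cl^0(E)$ lying in the centre is in $\mathbb R\mathbf 1$ in all cases. Writing $g=t\mathbf 1$ and using $g^{-1}=\alpha(g)=t\mathbf 1$ forces $t^2=1$, i.e.\ $g=\pm\mathbf 1$; the $\Pin(E)$ case is handled by the same computation with the expected extra attention when $n$ is odd. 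Finally, $\tau$ is a surjective morphism of Lie groups with discrete kernel of order $2$, hence a local diffeomorphism and therefore a two-sheeted covering. The only substantive inputs are the Cartan--Dieudonn\'e theorem for surjectivity and the description of the centre of $\cl(E)$ for the kernel, and I expect the sign and parity bookkeeping in the $\Pin$--$\Or$ part to be the main thing that needs care.
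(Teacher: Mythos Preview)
The paper does not actually prove this proposition: it is stated as a background fact in the preliminary section, with the references \cite{dss} and \cite{bgv} given for details, so there is no argument in the paper to compare your proposal against. Your outline is precisely the standard textbook proof one finds in those references, and the $\Spin(E)\to\SO(E)$ half is correct as written.

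The caveat you flag for the $\Pin$ case, however, is more than bookkeeping. With the paper's \emph{untwisted} definition $\tau_g(x)=gxg^{-1}$, a single unit vector $v$ acts as $\tau_v=-s_v$, whose determinant is $(-1)^{n+1}$. When $n$ is odd this equals $1$, so every $\tau_g$ with $g\in\Pin(E)$ lands in $\SO(E)$ and $\tau\colon\Pin(E)\to\Or(E)$ is \emph{not} surjective. Correspondingly, for $n$ odd the volume element $e_1\cdots e_n$ is central in $\cl(E)$ and belongs to $\Pin(E)$, so the kernel is $\{\pm\mathbf 1,\pm e_1\cdots e_n\}$, of order four rather than two. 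Thus for odd $n$ the $\Pin$ half of the statement, with this choice of $\tau$, is in fact false; the standard remedy in the literature is to use the twisted adjoint action $x\mapsto \beta(g)xg^{-1}$ via the grading automorphism $\beta$, which the paper does not adopt. Your proof sketch cannot close this gap because the gap lies in the statement itself, not in your argument.
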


\subsection{Clifford module and its dual}

Let $\mathbb E$ be the complexification of $E$ and extend the inner product on $E$ to a symmetric $\mathbb C$-bi\-li\-near form. Denote by $\ccl(E)$ the complex Clifford algebra of $\mathbb E$, which can be identified with $\cl(E)\otimes \mathbb C$.

A \emph{Clifford module} $(\mathbb S, \rho)$ is a complex vector space $\mathbb S$ together with a (left) action $\rho$ of $\ccl(E)$ on $\mathbb S$. By restriction, the action $\rho$ yields representations of the groups $\Pin(E)$ or $\Spin(E)$, also denoted by $\rho$. As $\Pin(E)$ is compact, there exists an inner product $\langle\,\cdot \,,\cdot\rangle$ on $\mathbb S$ for which the action of the group $\Pin(E)$ is unitary. Now for any $v\in E$ and for any $s,t\in \mathbb S$
\begin{gather*}\label{rho(v)}
\langle \rho(v)s, t\rangle= - \langle s, \rho(v)t\rangle .
\end{gather*}
In fact, it suffices to prove the formula for $v\in E$ such that $\vert v\vert =1$. But then $\rho(v)^2 = -1$, so that
\begin{gather*}
\langle \rho(v)s, t\rangle=-\langle \rho(v)s, \rho(v) \rho(v) t\rangle= - \langle s, \rho(v)t\rangle
\end{gather*}
using the unitarity of $\rho(v)$ for $v\in \Pin(E)$.

The dual space $\mathbb S'$ is also a Clifford module with the action $\rho'$ given by
\begin{gather*}
x\in E,\quad t'\in \mathbb S',\qquad \rho'(x)\, t'= -\,t'\circ \rho(x),
\end{gather*}
and then extended to a representation of $\ccl(E)$. The restriction of $\rho'$ to $\Spin(E)$ (still denoted by $\rho'$) coincides with the contragredient representation of $\rho$.

 Denote the duality between $\mathbb S$ and $\mathbb S'$ by $(s, t')$, for $s\in \mathbb S$ and $t'\in \mathbb S'$. Then
\begin{gather}
\forall x\in E,\quad s\in \mathbb S,\quad t'\in\mathbb S', \qquad (\rho(x) s,t') = -(s,\rho'(x)t'),\nonumber
\\[.5ex]
%\label{1rhorho'2}
\forall g\in \Spin(E),\quad s\in \mathbb S,\quad t'\in\mathbb S',\qquad (\rho(g) s,\rho'(g)t') = (s,t').\label{1rhorho'1}
\end{gather}

\subsection[Decomposition of the tensor product SxS']
{Decomposition of the tensor product $\boldsymbol{\mathbb S\otimes \mathbb S'}$}

 Let $\Lambda^*(E)$ be the dual of $\Lambda(E)$ and let $\Lambda^*(E)\otimes \mathbb C$ be its complexification, which can be regarded as the space of all complex multilinear alternating forms on $E$.

Define $\Psi \colon \mathbb S\otimes \mathbb S'\to\Lambda^*(E)\otimes \mathbb C$ by the following formula, for $s\in \mathbb S, t'\in \mathbb S'$ and $\omega\in \Lambda(E)$
\begin{gather*}
\Psi(s\otimes t') (\omega) = (\rho\big(\gamma(\omega)\big)s,t')
\end{gather*}
or more explicitly (see~\eqref{gamma}), for $\omega = e_{i_1}\wedge \cdots \wedge e_{i_k}$,
\begin{gather*}
\Psi(s\otimes t') \big(e_{i_1}\wedge \cdots \wedge e_{i_k} \big) = \big( \rho(e_{i_1}) \cdots \rho(e_{i_k})\, s,t'\big),
\end{gather*}
where $1\leq i_1<i_2<\cdots<i_k\leq n$.
\begin{prop} The map $\Psi \colon \mathbb S \otimes \mathbb S' \to \Lambda^*(E) \otimes \mathbb C$ intertwines the representation $\rho\otimes \rho'$ and the natural representation $\tau^*$ of \,$\Spin(E)$ on $\Lambda^*(E)\otimes \mathbb C$.
\end{prop}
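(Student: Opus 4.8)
The plan is to unwind the definition of $\Psi$ on both sides of the claimed identity $\Psi\circ(\rho\otimes\rho')=\tau^*\circ\Psi$ and reduce everything to the statement that conjugation by a spin element implements the corresponding rotation on the whole Clifford algebra, compatibly with the quantization map $\gamma$. Fix $g\in\Spin(E)$, $s\in\mathbb S$, $t'\in\mathbb S'$ and $\omega\in\Lambda(E)$. Evaluating the left-hand side at $\omega$ gives
\[
\Psi\big((\rho(g)\otimes\rho'(g))(s\otimes t')\big)(\omega)=\big(\rho(\gamma(\omega))\rho(g)s,\rho'(g)t'\big).
\]
Applying the invariance relation \eqref{1rhorho'1} with $s$ replaced by $\rho(g)^{-1}\rho(\gamma(\omega))\rho(g)s$, and using that $\rho$ is an algebra homomorphism together with $g^{-1}=\alpha(g)$, this equals $\big(\rho\big(g^{-1}\gamma(\omega)g\big)s,t'\big)$.

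The key step I would isolate is the algebraic lemma: for $g\in\Spin(E)$ and $a\in\cl(E)$ one has $g\,a\,g^{-1}=\tau_g\cdot a$, where $\tau_g\in\SO(E)$ acts on $\cl(E)$ via the natural extension of its action on $E$. Indeed, both $a\mapsto g\,a\,g^{-1}$ and $a\mapsto\tau_g\cdot a$ are algebra automorphisms of $\cl(E)$, and they agree on the generating subspace $E$ by the very definition of $\tau_g$; since $E$ generates $\cl(E)$, they coincide. Because $\gamma$ is an isomorphism of $\Or(E)$-modules and $\tau_{g^{-1}}=\tau_g^{-1}$ (as $\tau$ is a homomorphism), this yields $g^{-1}\gamma(\omega)g=\tau_{g^{-1}}\cdot\gamma(\omega)=\gamma\big(\tau_g^{-1}\omega\big)$.

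Combining, the left-hand side evaluated at $\omega$ becomes $\big(\rho\big(\gamma(\tau_g^{-1}\omega)\big)s,t'\big)=\Psi(s\otimes t')\big(\tau_g^{-1}\omega\big)$. On the other hand, $\tau^*$ is by definition the natural (contragredient) action of $\Spin(E)$ on the dual space $\Lambda^*(E)\otimes\mathbb C$, so $\big(\tau^*(g)\Psi(s\otimes t')\big)(\omega)=\Psi(s\otimes t')\big(\tau_g^{-1}\omega\big)$ as well. Since the two expressions agree for every $\omega\in\Lambda(E)$, the identity $\Psi\circ(\rho\otimes\rho')=\tau^*\circ\Psi$ follows.

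I expect the only genuinely substantive point to be the algebraic lemma that conjugation by $g$ coincides with the extended rotation action on all of $\cl(E)$, and in particular its compatibility with $\gamma$; the rest is bookkeeping with the duality $(\cdot\,,\cdot)$ and \eqref{1rhorho'1}. One could sidestep the lemma by verifying the intertwining only for $g=v_1v_2$ with $v_1,v_2\in E$, $|v_1|=|v_2|=1$ (using $vxv^{-1}=-s_xv$) and invoking multiplicativity of both $\rho\otimes\rho'$ and $\tau^*$, but the conceptual argument above seems cleaner.
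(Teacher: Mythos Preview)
Your proof is correct and follows essentially the same route as the paper: both evaluate $\Psi(\rho(g)s\otimes\rho'(g)t')$ on a test form, use the invariance relation \eqref{1rhorho'1} to strip off the $\rho'(g)$, and reduce to the fact that $g^{-1}\gamma(\omega)g=\gamma(\tau_g^{-1}\omega)$. The only cosmetic difference is that the paper checks this last identity by hand on basis monomials $e_{i_1}\wedge\cdots\wedge e_{i_k}$ via $\rho(x)\rho(g)=\rho(g)\rho(\tau(g^{-1})x)$, whereas you phrase it as the abstract lemma that conjugation by $g$ equals the extended $\tau_g$-action on $\cl(E)$ together with the stated $\Or(E)$-equivariance of $\gamma$.
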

\begin{proof}
Let $g\in \Spin(E)$. Recall that for any $x\in E$, $\tau(g) x = gxg^{-1}$, so that
\begin{gather*}
\rho(x) \rho(g) = \rho(g) \rho\big(g^{-1}xg\big) = \rho(g) \rho\big(\tau\big(g^{-1}\big) x)\big)
\end{gather*}
and hence, for $1\leq i_1<i_2<\cdots <i_k\leq n$
\begin{gather*}
\rho(e_{i_1}) \cdots \rho(e_{i_k}) \rho(g) = \rho(g) \rho\big( \tau\big(g^{-1}\big) e_{i_1}\big)\cdots \rho\big( \tau\big(g^{-1}\big) e_{i_k}\big).
\end{gather*}
so that for $s\in \mathbb S$, $t'\in \mathbb S'$
\begin{gather*}
\Psi(\rho(g)s\otimes\rho'(g)t')\big( e_{i_1}\wedge \cdots \wedge e_{i_k}\big) = \big(\rho(e_{i_1}) \cdots \rho(e_{i_k}) \rho(g) s,\rho'(g)t'\big)
\\ \hphantom{\Psi(\rho(g)s\otimes\rho'(g)t')\big( e_{i_1}\wedge \cdots \wedge e_{i_k}\big)}
 {}= \big( \rho(g) \rho\big( \tau\big(g^{-1}\big) e_{i_1}\big)\cdots \rho\big( \tau\big(g^{-1}\big) e_{i_k}\big)s, \rho'(g) t'\big)
 \\ \hphantom{\Psi(\rho(g)s\otimes\rho'(g)t')\big( e_{i_1}\wedge \cdots \wedge e_{i_k}\big)}
{} = \big( \rho\big( \tau\big(g^{-1}\big) e_{i_1}\big)\cdots \rho\big( \tau\big(g^{-1}\big) e_{i_k}\big)s, t'\big)
 \\ \hphantom{\Psi(\rho(g)s\otimes\rho'(g)t')\big( e_{i_1}\wedge \cdots \wedge e_{i_k}\big)}
{} = \tau(g)^*\Psi(s\otimes t') (e_{i_1}\wedge \dots \wedge e_{i_k})
\end{gather*}
we thus get
\begin{gather*}
\Psi(\rho(g)s\otimes \rho'(g)t') = \tau(g)^*\big( \Psi (s\otimes t')\big).
 \tag*{\qed}
\end{gather*}
\renewcommand{\qed}{}
\end{proof}

The space $\Lambda^*(E)\otimes \mathbb C$ decomposes further under the action of the group $\Spin(E)$ (which reduces to an action of $\SO(E)$) and in fact
\begin{gather*}
\Lambda^*(E)\otimes \mathbb C= \bigoplus_{k=0}^n \Lambda^*_k(E)\otimes \mathbb C,
\end{gather*}
where $\Lambda^*_k(E)$ is the space of alternating $k$-forms on $E$.
For $0\leq k\leq n$ let
\begin{gather*}
\Psi^{(k)} \colon\quad \mathbb S\otimes \mathbb S' \to \Lambda^*_k(E)\otimes \mathbb C,\qquad
\Psi^{(k)}= \proj_k\circ \Psi,
\end{gather*}
the operator $\proj_k$ being the projector from $\Lambda^*(E) \otimes \mathbb C$ onto $\Lambda^*_k(E)\otimes \mathbb C$.

The following lemma will be needed in the proof of the next proposition.
\begin{lem}
Let $J=\{j_1,\ldots, j_k \}$ with $1\leq j_1<j_2<\cdots <j_k\leq n$ and let
$e_J = e_{j_1}e_{j_2} \cdots e_{j_k}$. Then
\begin{gather}\label{sumeiei}
 \sum_{i=1}^{n}e_i e_J e_i = (-1)^{k-1} (n-2k) e_J .
\end{gather}
\end{lem}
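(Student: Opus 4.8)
The plan is to compute the left-hand side of \eqref{sumeiei} by splitting the sum into those indices $i$ that occur in $J$ and those that do not, and to evaluate each contribution separately using the Clifford relations $e_ie_j = -e_je_i$ for $i\neq j$ and $e_i^2 = -1$. The key elementary observation is that moving a single generator $e_i$ past the product $e_J = e_{j_1}\cdots e_{j_k}$ produces a sign $(-1)^k$ when $i\notin J$ (we commute $e_i$ past each of the $k$ factors, each time picking up a factor $-1$), so that $e_i e_J e_i = (-1)^k e_J e_i e_i = (-1)^k e_J(-1) = (-1)^{k+1} e_J$. There are $n-k$ such indices, contributing $(n-k)(-1)^{k+1} e_J$.

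Next I would treat an index $i = j_\ell \in J$. Here I bring $e_{j_\ell}$ to the front of $e_J$: it must be commuted past the $\ell-1$ generators $e_{j_1},\dots,e_{j_{\ell-1}}$, giving a sign $(-1)^{\ell-1}$, and then $e_{j_\ell}^2 = -1$ lets us remove the pair, so $e_{j_\ell}e_J = (-1)^{\ell-1}(-1)\,e_{j_1}\cdots\widehat{e_{j_\ell}}\cdots e_{j_k} = (-1)^{\ell}\,e_{J\setminus\{j_\ell\}}$. Multiplying on the right by $e_{j_\ell}$ and commuting it back to position $\ell$ past the remaining $k-1$ generators costs another $(-1)^{k-1}$, and then reinserting $e_{j_\ell}^2 = -1$ at position $\ell$ reconstructs $e_J$ up to sign; carefully tracking these signs gives $e_{j_\ell}e_J e_{j_\ell} = (-1)^{k}\,e_J$ independently of $\ell$. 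Summing over the $k$ elements of $J$ contributes $k(-1)^{k} e_J$.

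Adding the two contributions gives
\begin{gather*}
\sum_{i=1}^n e_i e_J e_i = (n-k)(-1)^{k+1} e_J + k(-1)^{k} e_J = (-1)^{k+1}(n-k-k) e_J = (-1)^{k-1}(n-2k) e_J,
\end{gather*}
using $(-1)^{k+1} = (-1)^{k-1}$. This is precisely \eqref{sumeiei}.

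The computation is entirely mechanical; the only real pitfall is bookkeeping the signs in the case $i\in J$, where one passes $e_{j_\ell}$ through part of the word, uses $e_{j_\ell}^2=-1$, and passes it back. A clean way to avoid error is to verify the two small cases $k=0$ (empty $J$, where the formula reads $\sum_i e_i e_i = -n = (-1)^{-1}n$) and $k=1$ (where $\sum_i e_i e_j e_i = e_j^3 + \sum_{i\neq j} e_i e_j e_i = -e_j + (n-1)e_j = (n-2)e_j$), and, if desired, to organize the general case as an induction on $k$ by peeling off the last generator $e_{j_k}$ and reducing to $J' = \{j_1,\dots,j_{k-1}\}$. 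I expect the sign bookkeeping in the $i \in J$ term to be the main (and only) obstacle, and it is a minor one.
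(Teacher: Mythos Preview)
Your argument is correct and follows exactly the paper's approach: split the sum according to whether $i\in J$ or $i\notin J$, and in each case reduce $e_ie_Je_i$ to a multiple of $e_J$ via the Clifford relations, obtaining $(n-k)(-1)^{k-1}e_J+k(-1)^ke_J=(-1)^{k-1}(n-2k)e_J$. One small wording slip: in the case $i=j_\ell\in J$, after obtaining $(-1)^\ell e_{J\setminus\{j_\ell\}}$ you need only commute $e_{j_\ell}$ past the $k-\ell$ generators $e_{j_{\ell+1}},\dots,e_{j_k}$ (not $k-1$), and no further ``reinsertion of $e_{j_\ell}^2$'' is needed---this gives the factor $(-1)^{k-\ell}$ and hence the correct $(-1)^k e_J$ independent of $\ell$, as you state.
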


\begin{proof}
Let $1\leq i\leq n$. Assume first that $i\notin J$. Then
\begin{gather*}
e_ie_Je_i = e_i (e_{j_1} e_{j_2} \cdots e_{j_k}) e_i= (-1)^k (e_{j_1} e_{j_2} \cdots e_{j_k}) e_i e_i= (-1)^{k-1} e_J .
\end{gather*}
Assume on the contrary that $i=j_\ell$ for some $\ell, 1\leq \ell \leq k$. Then
\begin{gather*}
e_i (e_{j_1} e_{j_2} \cdots e_{j_{\ell-1}}) e_{j_\ell} (e_{j_{\ell+1}} \cdots e_{j_k} ) e_i
= (-1)^{\ell-1} (-1)^{k-\ell}(e_{j_1} e_{j_2} \cdots e_{j_{\ell-1}}) e_ie_{j_\ell} e_i(e_{j_{\ell+1}} \cdots e_{j_k} )
\\ \hphantom{e_i (e_{j_1} e_{j_2} \cdots e_{j_{\ell-1}}) e_{j_\ell} (e_{j_{\ell+1}} \cdots e_{j_k} ) e_i}
{}= (-1)^k e_J .
\end{gather*}
Hence
\begin{gather*}
\sum_{i=1}^{n} e_ie_Je_i = (n-k) (-1)^{k-1} e_J+ k(-1)^k e_J
= (n-2k)(-1)^{k-1} e_J . \tag*{\qed}
\end{gather*}
\renewcommand{\qed}{}
\end{proof}

Let $L$ be the operator on $\mathbb S \otimes \mathbb S' $ given by
\begin{gather*}
L(v\otimes w')= \sum_{i=1}^n \rho(e_i)v\otimes \rho'(e_i)w' .
\end{gather*}

\begin{prop} Let $1\leq k\leq n$. Then
\begin{gather}\label{rhorho'3}
\Psi^{(k)} \circ L = (-1)^k (n-2k) \Psi^{(k)} .
\end{gather}
\end{prop}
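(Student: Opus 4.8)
The plan is to test the asserted identity against the standard basis of $\Lambda_k(E)$. Since $\proj_k$ records nothing but the degree-$k$ component of a functional on $\Lambda(E)$, the identity \eqref{rhorho'3} will follow once I check that, for every multi-index $J=\{j_1,\dots,j_k\}$ with $1\le j_1<\dots<j_k\le n$ and all $v\in\mathbb S$, $w'\in\mathbb S'$,
\begin{gather*}
\Psi\big(L(v\otimes w')\big)(e_J) = (-1)^k(n-2k)\,\Psi(v\otimes w')(e_J),
\end{gather*}
where here $e_J = e_{j_1}\wedge\dots\wedge e_{j_k}$.

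First I would simply unwind the definitions of $L$ and of $\Psi$:
\begin{gather*}
\Psi\big(L(v\otimes w')\big)(e_J) = \sum_{i=1}^n \Psi\big(\rho(e_i)v\otimes\rho'(e_i)w'\big)(e_J) = \sum_{i=1}^n \big(\rho\big(\gamma(e_J)\big)\rho(e_i)v,\ \rho'(e_i)w'\big).
\end{gather*}
Next I would move the factor $\rho'(e_i)$ off the second argument by means of the relation $(\rho(x)s,t')=-(s,\rho'(x)t')$ for $x\in E$ noted above, and use that $\rho$ is an algebra homomorphism, to obtain
\begin{gather*}
\big(\rho\big(\gamma(e_J)\big)\rho(e_i)v,\ \rho'(e_i)w'\big) = -\big(\rho\big(e_i\,\gamma(e_J)\,e_i\big)v,\ w'\big).
\end{gather*}
Summing over $i$ and invoking the Lemma — with $\gamma(e_J)=e_{j_1}\cdots e_{j_k}$ being the Clifford element denoted $e_J$ there, so that $\sum_{i=1}^n e_i\,\gamma(e_J)\,e_i=(-1)^{k-1}(n-2k)\gamma(e_J)$ — this becomes
\begin{gather*}
\Psi\big(L(v\otimes w')\big)(e_J) = -(-1)^{k-1}(n-2k)\big(\rho\big(\gamma(e_J)\big)v,\ w'\big) = (-1)^k(n-2k)\,\Psi(v\otimes w')(e_J),
\end{gather*}
which is the desired equality; applying $\proj_k$ then yields \eqref{rhorho'3}.

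The argument is essentially a bookkeeping exercise once the Lemma is in hand, so I do not expect a genuine obstacle. The two points that need care are the sign when transferring $\rho(e_i)$ across the duality pairing via the contragredient action $\rho'$, and the preliminary reduction to basis $k$-forms — which is legitimate precisely because $\Psi^{(k)}=\proj_k\circ\Psi$ is determined by the values of $\Psi$ on $\Lambda_k(E)$, so that evaluating on the monomials $e_J$ with $|J|=k$ loses no information.
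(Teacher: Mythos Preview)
Your argument is correct and follows essentially the same route as the paper: evaluate on basis $k$-vectors $e_J$, use the duality relation $(\rho(e_i)s,t')=-(s,\rho'(e_i)t')$ to move the $e_i$ across the pairing, and then invoke the Lemma on $\sum_i e_i\gamma(e_J)e_i$. The only cosmetic difference is that the paper writes $\rho(e_{j_1})\cdots\rho(e_{j_k})$ explicitly rather than $\rho(\gamma(e_J))$.
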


\begin{proof}
Fix $v\in \mathbb S$ and $w'\in \mathbb S'$. Let $J=\{j_1,\ldots, j_k \}$ with $1\leq j_1<j_2<\cdots <j_k\leq n$ and let $e_J$ be the corresponding $k$-vector.
Then
\begin{gather*}
\Psi^{(k)} \big(L(v\otimes w')\big) (e_J) = \sum_{i=1}^n (\rho(e_{j_1}) \rho(e_{j_2}) \cdots \rho(e_{j_k}) \rho(e_i)v, \rho'(e_i)w'),
\end{gather*}
which by~\eqref{1rhorho'1} is transformed to
\begin{gather*}
\Psi^{(k)} \big(L(v\otimes w')\big) (e_J)=- \sum_{i=1}^n (\rho(e_i) \rho(e_{j_1}) \rho(e_{j_2}) \cdots \rho(e_{j_k}) \rho(e_i)v,w')
\\ \hphantom{\Psi^{(k)} \big(L(v\otimes w')\big) (e_J)}
{}= -\bigg(\rho\bigg(\sum_{i=1}^n e_ie_{j_1} e_{j_2} \cdots e_{j_k} e_i\bigg) v,w'\bigg),
\end{gather*}
and according to~\eqref{sumeiei} we have
\begin{gather*}
\Psi^{(k)} \big(L(v\otimes w')\big) (e_J) = (-1)^k(n-2k) (\rho(e_{j_1} e_{j_2} \cdots e_{j_k}) v,w')
\\ \hphantom{\Psi^{(k)} \big(L(v\otimes w')\big) (e_J)}
{}= (-1)^k (n-2k) \Psi^{(k)}(v\otimes w')(e_J),
\end{gather*}
hence, the conclusion follows.
\end{proof}

\subsection[Spinors and irreducible representations of Spin(E)]
{Spinors and irreducible representations of $\boldsymbol{\Spin(E)}$}

For the sake of completeness, we now discuss the irreducible Clifford modules and the corresponding representations of the spin group, known as \emph{spinor spaces}, see, e.g.,~\cite{dss, deligne, kn}.

When $n$ is even, say $n=2m$, there exists, up to equivalence a unique irreducible Clifford module $\mathbb S_{2m}$ of dimension $2^m$. As a representation of $\Spin(2m)$, $\mathbb S_{2m}$ splits into two irreducible non-equivalent representations, the half spinors spaces $\mathbb S^+_{2m}$ and $\mathbb S^-_{2m}$, each of dimension $2^{m-1}$.

When $n$ is odd, say $n=2m+1$, there exist two non-equivalent irreducible Clifford modules, of dimension $2^m$. As representations of the spin group $\Spin(2m+1)$, they are irreducible and equivalent, thus leading to a unique spinor space $\mathbb S_{2m+1}$.

Whether $n$ is even or odd, the dual of the Clifford module $\mathbb S_n$ is isomorphic to itself as a representation of the spin group $\Spin(n)$. In the even case, the half spinor space is either self dual or isomorphic to its opposite half spinor space, depending on $m$, but in any case,
$\mathbb S_{2m}\otimes \mathbb S_{2m}= \big(\mathbb S_{2m}^+\otimes \mathbb S_{2m}^+\big)\oplus \big(\mathbb S_{2m}^+\otimes \mathbb S_{2m}^-\big)\oplus \big(\mathbb S_{2m}^-\otimes \mathbb S_{2m}^+\big) \oplus \big( \mathbb S_{2m}^-\otimes \mathbb S_{2m}^-\big)$.

{\sloppy
The representation of the spin group $\Spin(n)$ on $\Lambda^*(\mathbb R^n)$ goes down to a representation of~$\SO(n)$ and decomposes as $\bigoplus_{k=0}^n \Lambda^*_k(\mathbb R^n)$. The Hodge operator yields an isomorphism \mbox{$\Lambda^*_k(\mathbb R^n) \simeq \Lambda^*_{n-k}(\mathbb R^n)$}. In~the odd case $\Lambda^*_k(\mathbb R^n)$ is irreducible for any $k$, whereas for $n=2m$, $\Lambda^*_k(\mathbb R^n)$ is irreducible except for $k=m$, and in fact, $\Lambda^*_m\big(\mathbb R^{2m}\big)$ splits in two irreducible non-equivalent representations.

}

 In the present article we chose to work with Clifford modules. The latter considerations show that it is clearly possible to deduce results for spinor or half spinor spaces, just by refining the decomposition under the action of the spin group.

\section{The conformal spin group and the spinorial representations}

In this section, we present the construction of the conformal spin group $\bf G$ of the space $E$, its conformal action on $E$ and the representations of $\bf G$ associated by induction of the Clifford modules. For convenience we identify $E$ with $\mathbb R^n$.

\subsection[The conformal spin group of Rn]{The conformal spin group of $\boldsymbol{\mathbb{R}^n}$}

Let ${\bf E} = \mathbb R^{1,n+1}$ be the real vector space of dimension $n+2$ equipped with the symmetric bilinear form given by
\begin{gather*}
{\bf Q}( {\mathbf x,\mathbf y }) = x_0y_0-x_1y_1-\dots -x_{n+1}y_{n+1} .
\end{gather*}
Denote by $\cl(\bf E)$ the corresponding Clifford algebra, generated by $\mathbf E$ and subject to the relation
\begin{gather*}
\mathbf x\mathbf y+ \mathbf y\mathbf x= 2\mathbf Q(\mathbf x, \mathbf y) .
\end{gather*}
Let $\boldsymbol \alpha$ be the conjugation of the Clifford algebra, i.e., the unique anti-involution of $\cl(\bf E)$ such that $ \boldsymbol \alpha(\mathbf x) = -\mathbf x$ for $\mathbf x\in\mathbf E$.
Let
$ {\mathbf G} = \Spin_0(1,n+1)$ be defined by
\begin{gather*}
\mathbf G=\big\{\mathbf v_1 \mathbf v_2\cdots \mathbf v_{2k},\,k\in \mathbb N,\, \mathbf v_j\in \mathbf E,\, \mathbf Q(\mathbf v_j) = \pm 1,\, \#\{j,\mathbf Q(\mathbf v_j) = -1\} \text{ even} \big\} .
\end{gather*}
Then $ {\mathbf G}$ is a connected Lie group,
 the inverse of an element $\bf g$ is equal to $\bf g^{-1} = \boldsymbol \alpha (g)$. For $\mathbf x\in \mathbf E$ and $\bf g\in \mathbf G$, the element $\bf g\mathbf x \boldsymbol \alpha(\bf g)$ belongs to $\mathbf E$ and the map $\boldsymbol \tau_{\bf g} \colon\mathbf x \mapsto\bf g\mathbf x \boldsymbol \alpha(\bf g)$ defines an~isometry of $(\mathbf E,\mathbf Q)$. Moreover, the map $ \bf g\mapsto\boldsymbol \tau_{\bf g}$ is a Lie group homomporphism from $\mathbf G$ onto $\SO_0({\bf E})\simeq \SO_0(1,n+1)$ which turns out to be a twofold covering (see~\cite{dss} for more details).

The Lie algebra $\mathbf {\mathfrak g}$ of $\mathbf G$ can be realized as the subspace $\cl^2(\mathbf E)$ of bivectors in $\cl(\bf E)$ spanned by $\{e_ie_j,\, 0\leq i<j\leq n+1\}$.
The Lie algebra $\mathfrak g$ is isomorphic to $\mathfrak o(1,n+1)$. The isomorphism of $\mathbf E$ given by
 \begin{gather*}
 e_0 \mapsto e_0,\qquad e_j \mapsto - e_j,\qquad
 1\leq j\leq n+1
 \end{gather*}
 can be extended as an involution of $\cl(\bf E)$ which, when restricted to
$\cl^2(\mathbf E)$ yields a Cartan involution $\theta$ of $\mathfrak g$. The corresponding decomposition of $\mathfrak g$ into eigenspaces of $\theta$ is given by $\mathfrak g = \mathfrak k\oplus \mathfrak s$, where
\begin{gather*}
\mathfrak k = \bigoplus_{1\leq i<j\leq n+1} \mathbb R\, e_ie_j,\qquad
\mathfrak s = \bigoplus_{j=1}^{n+1}\mathbb R\, e_0e_j .
\end{gather*}
A Cartan subspace $\mathfrak a$ of $\mathfrak s$ is given by
\begin{gather*}
\mathfrak a = \mathbb R H,\qquad \text{where}\quad H=e_0e_{n+1} .
\end{gather*}
Now let
\begin{gather*}
\mathfrak m = \sum_{1\leq i<j\leq n} \mathbb R\, e_ie_j,\qquad
\mathfrak n = \bigoplus_{j=1}^n \mathbb R\, e_j(e_0-e_{n+1}),\qquad \overline{\mathfrak n} = \bigoplus_{j=1}^n \mathbb R\, e_j(e_0+e_{n+1}) ,
\end{gather*}
and notice that
\begin{gather*}
[H,\mathfrak m] = 0,\qquad
\ad H_{\vert \mathfrak n} = +2, \qquad
\ad H_{\vert \overline{\mathfrak n}} = -2 .
\end{gather*}
Then
\begin{gather*}
\mathfrak g = \overline{\mathfrak n}\oplus \mathfrak m\oplus \mathfrak a\oplus \mathfrak n
\end{gather*}
is a Gelfand--Naimark decomposition of $\mathfrak g$. By elementary calculation, for $t\in \mathbb R$
\begin{gather*}
a_t:=\exp(te_0e_{n+1}) = \cosh t +\sinh t\, e_0\, e_{n+1} = e_0\big(\cosh t \,e_0+\sinh t \,e_{n+1}\big)
\end{gather*}
and for $y\in \mathbb R^n$
\begin{gather*}
n_y := \exp \frac{y(e_0-e_{n+1})}{2} = 1+\frac{y(e_0-e_{n+1})}{2}
\end{gather*}
and similarly, for $z\in \mathbb R^n$
\begin{gather*}
\overline n_z := \exp \frac{z(e_0+e_{n+1})}{2} = 1+\frac{z(e_0+e_{n+1})}{2} .
\end{gather*}
The analytic Lie subgroups of $\mathbf G$ associated to $\mathfrak a$, $\mathfrak n$, and $\mathfrak {\overline n}$ are isomorphic to their counterparts in $\SO_0(1,n+1)$ and hence are denoted respectively by $A$, $N$, $\overline N$.

The Cartan involution of $\mathfrak g$ can be lifted to a Cartan involution of $\mathbf G$. The fixed point set of~this involution is a maximal compact subgroup
\begin{gather*}
\mathbf K= \bigg\{\mathbf v_1\mathbf v_2\cdots \mathbf v_{2k},\, \mathbf v_j \in \bigoplus_{i=1}^{n+1} \mathbb R e_i,\, Q(\mathbf v_j) = -1, \, 1\leq j \leq 2k\bigg\},
\end{gather*}
isomorphic to $\Spin(n+1)$.
Let $\mathbf M$ be the centralizer of $A$ in $\mathbf K$ which is isomorphic to $\Spin(n)$.
Let $\mathbf M'$ be the normalizer of $A$ in $\mathbf K$. Then the Weyl group $\mathbf M'/\mathbf M$ has two elements. As~a~rep\-resentative of the non-trivial Weyl group element choose
\begin{gather*}
w= e_1e_{n+1}
\end{gather*}
and observe in fact that
\begin{gather*}
wHw^{-1} = \frac{1}{2}e_1e_{n+1}\,e_0 e_{n+1}\, e_{n+1}e_1 = -\frac{1}{2}e_0e_{n+1}= -H.
\end{gather*}

\subsection{The Gelfand--Naimark decomposition}
To the decomposition of $\mathfrak g$ is associated a (partial) decomposition of the group $\mathbf G$, often called the Gelfand--Naimark decomposition.
More precisely, the map
\begin{gather*}
\overline N\times \mathbf M\times A\times N \ni (\overline n, m, a, n) \mapsto \overline n man\in \mathbf G
\end{gather*}
is injective and its image is a dense open subset of full measure in $\mathbf G$. Conversely, let $g\in \mathbf G$ and assume that $g$ belongs to the image. Then there are unique elements $\overline n(g)\in \overline N$, $m(g)\in \mathbf M$, $a(g)\in A$ and $n(g)\in N$ such that
\begin{gather*}
g=\overline n(g) m(g) a(g) n(g) .
\end{gather*}
The following result will be needed in the sequel.

\begin{prop} \label{Gelfanddec}
 Let $x\in \mathbb R^n$, and assume that $x\neq 0$. Let $x' = e_1xe_1$. Then the following iden\-tity holds:
\begin{gather}\label{GN-decomp}
w^{-1} \overline n_x
=\overline n_{\frac{x'}{\vert x\vert^2}} \bigg({-}e_1\frac{x}{\vert x\vert}\bigg) a_{\ln \vert x\vert}\
n_{\frac{x}{\vert x\vert^2}} .
\end{gather}
In particular,
\begin{gather*}
m\big(w^{-1} \overline n_x\big) = -e_1\frac{x}{\vert x\vert},\qquad
\ln a\big(w^{-1}\overline n_x\big)=\ln \vert x\vert .
\end{gather*}
\end{prop}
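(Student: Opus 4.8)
The plan is to verify the identity \eqref{GN-decomp} by a direct computation in the Clifford algebra $\cl(\mathbf E)$, using the explicit formulas for $a_t$, $n_y$, $\overline n_z$ and $w$ given above. First I would recall that $w^{-1} = \boldsymbol\alpha(w) = \boldsymbol\alpha(e_1 e_{n+1}) = e_{n+1}e_1$ (up to a sign coming from $\boldsymbol\alpha(e_j)=-e_j$; in fact $\boldsymbol\alpha(e_1e_{n+1}) = \boldsymbol\alpha(e_{n+1})\boldsymbol\alpha(e_1) = e_{n+1}e_1$), and write the left-hand side as
\begin{gather*}
w^{-1}\overline n_x = e_{n+1}e_1\bigg(1 + \frac{x(e_0+e_{n+1})}{2}\bigg).
\end{gather*}
The strategy is then to expand the conjectured right-hand side, collect it into the form $1 + (\text{terms})$, and match. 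The element $-e_1\frac{x}{|x|}$ lies in $\mathbf M$ because, writing $u = x/|x| \in \mathbb R^n$ a unit vector, the product $e_1 u$ is an even product of unit vectors of negative $\mathbf Q$-norm, hence in $\mathbf K$, and it commutes with $A$; this identifies the $\mathbf M$-component once the decomposition is established.

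The key computational steps, in order: (1) substitute $a_{\ln|x|} = |x|^{1/2}\big(\text{something}\big)$ — more precisely use $a_t = \cosh t + \sinh t\, e_0 e_{n+1}$ with $e^t = |x|$, so $\cosh t = \tfrac12(|x| + |x|^{-1})$, $\sinh t = \tfrac12(|x| - |x|^{-1})$; (2) expand $\overline n_{x'/|x|^2}(-e_1 u)a_{\ln|x|} n_{x/|x|^2}$ from the right, using repeatedly the Clifford relations $e_i e_j = -e_j e_i$ for $i\neq j$, $e_0^2 = 1$, $e_j^2 = -1$ for $j\geq 1$, and for $y,z\in\mathbb R^n$ the identities $yz + zy = -2\langle y,z\rangle$; (3) use that $(e_0 - e_{n+1})(e_0 + e_{n+1}) = e_0^2 - e_0 e_{n+1} + e_{n+1}e_0 - e_{n+1}^2 = 1 + 1 + 2e_{n+1}e_0 = 2 - 2e_0 e_{n+1}$ wait — more carefully $(e_0+e_{n+1})^2 = e_0^2 + e_0 e_{n+1} + e_{n+1}e_0 + e_{n+1}^2 = 1 - 1 = 0$, and similarly $(e_0 - e_{n+1})^2 = 0$, while $(e_0-e_{n+1})(e_0+e_{n+1}) = 1 - (-1) + (e_0 e_{n+1} + e_{n+1}e_0) = 2$ since $e_0 e_{n+1} + e_{n+1}e_0 = 2\mathbf Q(e_0,e_{n+1}) = 0$. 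These nilpotency/orthogonality relations will make most cross-terms vanish. Then $\ln a(\cdot)=\ln|x|$ and $m(\cdot) = -e_1 u$ follow by reading off the uniqueness of the Gelfand--Naimark factors.

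The main obstacle I anticipate is bookkeeping: keeping track of the precise signs produced by moving $e_1$ and $e_{n+1}$ past the vector $x = \sum x_j e_j$ (note $e_1 x = -x e_1 - 2x_1$, since $e_1 x + x e_1 = -2\langle e_1, x\rangle = -2x_1$), and correctly handling the term $x' = e_1 x e_1 = -(x - 2x_1 e_1)e_1 e_1\cdots$ — in fact $e_1 x e_1 = (-xe_1 - 2x_1)e_1 = -x e_1^2 - 2x_1 e_1 = x - 2x_1 e_1$, which is the reflection of $x$ in the $e_1$-axis sign-flipped appropriately; one must verify $|x'| = |x|$ and that $x'\in\mathbb R^n$ so that $\overline n_{x'/|x|^2}$ is well-defined. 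A clean way to organize the check is to multiply \eqref{GN-decomp} on the left by $w = e_1 e_{n+1}$ and on the right by $n_{x/|x|^2}^{-1} = n_{-x/|x|^2}$ and $a_{\ln|x|}^{-1} = a_{-\ln|x|}$, reducing everything to the single identity
\begin{gather*}
\overline n_x\, n_{-x/|x|^2}\, a_{-\ln|x|} = w\,\overline n_{x'/|x|^2}\,\big({-}e_1 u\big),
\end{gather*}
and then expanding both sides as elements $1 + (\text{degree-}2) + (\text{degree-}4)$ of $\cl(\mathbf E)$ and comparing degree by degree; the unit-vector normalization $|x|^{-2}x'\cdot$ is exactly what is needed for the degree-2 parts to agree. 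Alternatively, one can map everything down to $\SO_0(1,n+1)$ via $\boldsymbol\tau$ and recognize the identity as the standard Bruhat decomposition of $w^{-1}\overline n_x$ in the Lorentz group (the conformal inversion composed with a translation), then lift the $\pm$ ambiguity using that both sides are continuous in $x$ and agree at one point — but the direct Clifford computation is more self-contained.
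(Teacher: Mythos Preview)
Your approach---direct expansion of both sides in $\cl(\mathbf E)$ and comparison---is exactly what the paper does: it reduces the left-hand side to $-\tfrac{1}{2}e_1x - \tfrac{1}{2}e_1xe_0e_{n+1} - e_1e_{n+1}$ and then expands the four-factor right-hand side using that $e_1x$ commutes with $e_0e_{n+1}$ together with the identity $x'e_1x = |x|^2 e_1$. One caution on your scratch work: $(e_0-e_{n+1})(e_0+e_{n+1}) = 2 + 2e_0e_{n+1}$, not $2$, since the cross term is $e_0e_{n+1} - e_{n+1}e_0 = 2e_0e_{n+1}$ rather than the anticommutator.
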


\begin{proof}
First
\begin{gather*}
w^{-1} \overline n_x= e_{n+1}e_1\bigg(1+\frac{1}{2}x(e_0+e_{n+1})\bigg)
 =-\frac{1}{2}e_1x-\frac{1}{2}e_1xe_0e_{n+1}-e_1e_{n+1} .
\end{gather*}
The right side of the identity~\eqref{GN-decomp} is equal to
\begin{gather*}
 \bigg(\!1+\frac{x'(e_0+e_{n+1})}{2\vert x \vert^2}\bigg)
 \bigg(\!{-}e_1\frac{x}{\vert x\vert}\bigg)\bigg(\frac{1}{2}\bigg(\!\vert x\vert +\frac{1}{\vert x\vert}\bigg) + \frac{1}{2}\bigg(\!\vert x\vert -\frac{1}{\vert x\vert}\bigg)e_0e_{n+1}\bigg)
 \bigg(\!1+\frac{x(e_0-e_{n+1})}{2\vert x \vert^2} \bigg)
\end{gather*}
whereas the left-hand side is obtained by a standard computation, using in particular the fact that $e_1x$ commutes with $e_0 e_{n+1}$ and the relation $x'e_1x = \vert x\vert^2 e_1$.
\end{proof}

 The left action of $\mathbf G$ on $\mathbf G/\mathbf P$ can be transferred to a rational action (not everywhere defined) on $\overline {\mathfrak n} \simeq \mathbb R^n$,
more explicitly,
\begin{gather*}
g(\bar n_x)= \bar n( g\bar n_x),
\end{gather*}
when it is defined and we simply denote the action of $\mathbf G$ on $\mathbb R^n$ by $g(x)$.
 In particular, the action of $\mathbf M$ on $\mathbb R^n$ is given by
\begin{gather*}
m\in \mathbf M,\qquad x\in \mathbb R^n, \qquad x\mapsto mxm^{-1},
\end{gather*}
the action of $A$ is given by
\begin{gather*}
a_t\in A,\qquad x\in \mathbb R^n,\qquad x\mapsto {{\rm e}^{-2t} x},
\end{gather*}
and the action of $\overline N$ is given by
\begin{gather*}
n_v\in \mathbb \overline N,\qquad x\in \mathbb R^n,\qquad x\mapsto x+v.
\end{gather*}

\subsection[The representation induced from a Clifford module and the associated Knapp--Stein operators]
{The representation induced from a Clifford module \\and the associated Knapp--Stein operators}\label{S33}

Let $(\mathbb S, \rho)$ be a Clifford module for the Clifford algebra $\ccl(E)$. The restriction of $\rho$ to the spin group $\mathbf M$ yields a representation of $\mathbf M$, still denoted by $\rho$.

For $\lambda\in \mathbb C$, let $\chi_\lambda$ be the character of $A$ given by
\begin{gather*}
\chi_\lambda(a_t) = {\rm e}^{2t\lambda}\qquad \text {for}\quad t\in \mathbb R .
\end{gather*}
Now consider the representation
 of $\mathbf P= \mathbf M AN$ given by
\begin{gather*}
\rho\otimes \chi_\lambda\otimes 1,
\end{gather*}
and let
\begin{gather*}
\pi_{\rho,\lambda} = \Ind_{\mathbf P}^{\mathbf G} \rho\otimes \chi_\lambda\otimes 1
\end{gather*}
be the associated induced representation from $\mathbf P$ to $\mathbf G$. Let $\mathbb S_{\rho, \lambda}$ be the associated bundle $\bf G\times_{\rho,\lambda} \mathbb S$ over $\bf G/\bf P$ and let $\mathcal H_{\rho, \lambda}$ be the space of smooth sections of $ \mathbb S_{\rho, \lambda}$. The natural action of $\bf G$ on $\mathbb S_{\rho, \lambda}$
gives a realization of $\pi_{\rho, \lambda}$ on $\mathcal H_{\rho, \lambda}$.

Another realization of the representation $\pi_{\rho, \lambda}$, more fitted for calculations is the \emph{non-compact picture}, see \cite[Chapter VII]{kn}. In this model, the representation is given by
\begin{gather*}
\pi_{\rho,\lambda} (g) F (\overline n) = {\chi_\lambda\big(a\big(g^{-1}\overline n\big)\big)^{-1}} \rho\big(m\big(g^{-1}\overline n\big)\big)^{-1} F\left(g^{-1}(\overline n)\right)\!,
\end{gather*}
where $F$ is a smooth $\mathbb S$-valued function on $\overline N$.

Consider now the representation $w\rho$ of $\mathbf M$ defined by
\begin{gather*}
\forall m\in \mathbf M,\qquad (w\rho)(m) = \rho\big(w^{-1} m w\big) .
\end{gather*}
\begin{prop} \label{wrhoeqrho}
The representation $w \rho$ is equivalent to $ \rho$. More precisely, for all $m\in \mathbf M$
\begin{gather}\label{wrho}
 \rho(-e_1)\circ w \rho(m) = \rho(m)\circ \rho(-e_1) .
\end{gather}
\end{prop}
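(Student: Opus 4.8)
The plan is to turn the operator identity~\eqref{wrho} into an identity inside the Clifford algebra $\cl(\mathbf E)$ and then apply the algebra homomorphism $\rho$. First I would record the two elementary facts that $w^{-1}=\boldsymbol\alpha(w)=e_{n+1}e_1$ and that $e_{n+1}^2=\mathbf Q(e_{n+1},e_{n+1})=-1$.

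The heart of the argument is the claim that for every $m\in\mathbf M$ one has
\begin{gather*}
w^{-1}\,m\,w=-e_1\,m\,e_1,
\end{gather*}
an identity which a priori lives in $\cl(\mathbf E)$ but whose right-hand side actually belongs to the subalgebra $\cl(E)\subset\ccl(E)$ generated by $e_1,\dots,e_n$. To prove it I would use that $\mathbf M\cong\Spin(n)$ consists of products of an even number of unit vectors lying in $\bigoplus_{j=1}^{n}\mathbb R\,e_j$; since $\mathbf Q(e_j,e_{n+1})=0$ for $1\leq j\leq n$, each such vector anticommutes with $e_{n+1}$, hence $m$ commutes with $e_{n+1}$, and likewise $e_1\,m\,e_1$ (a product of an even number of vectors from the same subspace) commutes with $e_{n+1}$. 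Therefore $w^{-1}mw=e_{n+1}(e_1 m e_1)e_{n+1}=(e_1 m e_1)\,e_{n+1}^2=-e_1 m e_1$. The one point deserving attention here is precisely that $w^{-1}mw$ lands in $\cl(E)$: the homomorphism $\rho$ is defined on $\ccl(E)$ but not on the bigger algebra generated by $\mathbf E$, so without this observation the very expression $w\rho(m)=\rho(w^{-1}mw)$ would be meaningless.

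Once the algebraic identity is established, the rest is automatic. Applying $\rho$ and using $e_1^2=-\mathbf 1$ in $\cl(E)$, I obtain
\begin{gather*}
\rho(-e_1)\circ w\rho(m)=\rho\big((-e_1)(w^{-1}mw)\big)=\rho\big(e_1^2\,m\,e_1\big)=\rho(-m\,e_1)=\rho(m)\circ\rho(-e_1),
\end{gather*}
which is exactly~\eqref{wrho}. Finally, since $\rho(e_1)^2=\rho\big(e_1^2\big)=-\id$, the operator $\rho(-e_1)$ is invertible, so~\eqref{wrho} exhibits it as an isomorphism intertwining $w\rho$ with $\rho$; hence the two representations of $\mathbf M$ are equivalent. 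I do not anticipate any genuine difficulty: the computation is a short manipulation in the Clifford algebra, the only subtlety being the domain-of-definition remark above.
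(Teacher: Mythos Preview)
Your argument is correct and follows essentially the same route as the paper: both compute $w^{-1}mw=e_{n+1}e_1me_1e_{n+1}=-e_1me_1$ in $\cl(\mathbf E)$ using that $e_{n+1}$ commutes with $m$ (and with $e_1me_1$), then apply $\rho$ and multiply by $\rho(-e_1)=\rho(e_1)^{-1}$. Your explicit remark that $w^{-1}mw$ must first be shown to lie in $\cl(E)$ before $\rho$ can be applied is a welcome clarification that the paper leaves implicit.
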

\begin{proof}
Recall that $w=e_1e_{n+1}$ so that for any $m\in \mathbf M$
\begin{gather*}
w\rho(m) = \rho(e_{n+1} e_1me_1 e_{n+1}) .
\end{gather*}
As $e_{n+1}$ anticommutes with $e_1$ and commutes with $m$, this implies
\begin{gather*}
w\rho(m) = \rho(e_1m(-e_1))= \rho(e_1) \rho(m) \rho(-e_1)\end{gather*}
from which~\eqref{wrho} follows by left multiplication by $ \rho(-e_1)= \rho(e_1)^{-1}$.
\end{proof}

Now form the induced representation
\begin{gather*}
\pi_{w \rho, n-\lambda} =\Ind_\mathbf P^\mathbf G (w\rho\otimes \chi_{n-\lambda}\otimes 1) .
\end{gather*}
The Knapp--Stein operators $J_{\rho, \lambda}$ are intertwining operators between $\pi_{ \rho, \lambda}$ and $\pi_{w \rho, n-\lambda}$, see again~\cite{kn} for general information on these operators. Using the equivalence between $w \rho$ and $ \rho$, introduce the operators
\begin{gather*}
I_{ \rho, \lambda} = \rho(-e_1)\circ J_{ \rho, \lambda} .
\end{gather*}

\begin{prop} For any $g\in \mathbf G$,
\begin{gather*}
I_{ \rho,\lambda}\circ \pi_{ \rho,\lambda}(g) = \pi_{ \rho, n-\lambda}(g) \circ I_{ \rho,\lambda} .
\end{gather*}
\end{prop}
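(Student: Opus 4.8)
The plan is to deduce the intertwining property of $I_{\rho,\lambda}$ directly from the known intertwining property of the Knapp--Stein operator $J_{\rho,\lambda}$ together with the equivalence \eqref{wrho} between $w\rho$ and $\rho$ established in Proposition~\ref{wrhoeqrho}. First I would recall that, by definition of the Knapp--Stein operator, $J_{\rho,\lambda}$ intertwines $\pi_{\rho,\lambda}$ with $\pi_{w\rho,n-\lambda}$, i.e., $J_{\rho,\lambda}\circ\pi_{\rho,\lambda}(g) = \pi_{w\rho,n-\lambda}(g)\circ J_{\rho,\lambda}$ for all $g\in\mathbf G$. The only thing then to check is that composing on the left with $\rho(-e_1)$ transports $\pi_{w\rho,n-\lambda}$ to $\pi_{\rho,n-\lambda}$, i.e., that $\rho(-e_1)$ realizes a $\mathbf G$-equivariant isomorphism of the induced modules $\Ind_\mathbf P^\mathbf G(w\rho\otimes\chi_{n-\lambda}\otimes 1)$ and $\Ind_\mathbf P^\mathbf G(\rho\otimes\chi_{n-\lambda}\otimes 1)$.

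The key step is therefore the following lemma-type observation: an intertwining operator $\Phi\colon\mathbb S\to\mathbb S$ of $\mathbf M$-representations between $w\rho$ and $\rho$ (here $\Phi=\rho(-e_1)$, satisfying $\Phi\circ(w\rho)(m)=\rho(m)\circ\Phi$ by \eqref{wrho}) induces, by postcomposition on sections (equivalently on the $\mathbb S$-valued functions in the non-compact picture), an intertwining operator between the corresponding induced representations with the same $A$- and $N$-data. In the non-compact picture this is transparent: writing both representations via the formula $\pi_{\sigma,\nu}(g)F(\overline n) = \chi_\nu(a(g^{-1}\overline n))^{-1}\,\sigma(m(g^{-1}\overline n))^{-1}\,F(g^{-1}(\overline n))$, and setting $(\widetilde\Phi F)(\overline n) = \Phi(F(\overline n))$, one computes
\begin{gather*}
\widetilde\Phi\big(\pi_{w\rho,n-\lambda}(g)F\big)(\overline n)
= \chi_{n-\lambda}\big(a\big(g^{-1}\overline n\big)\big)^{-1}\,\Phi\circ(w\rho)\big(m\big(g^{-1}\overline n\big)\big)^{-1}\,F\big(g^{-1}(\overline n)\big).
\end{gather*}
Using $\Phi\circ(w\rho)(m)^{-1}=\rho(m)^{-1}\circ\Phi$ (the inverse of \eqref{wrho}), the right-hand side equals
\begin{gather*}
\chi_{n-\lambda}\big(a\big(g^{-1}\overline n\big)\big)^{-1}\,\rho\big(m\big(g^{-1}\overline n\big)\big)^{-1}\,\Phi\big(F\big(g^{-1}(\overline n)\big)\big)
= \big(\pi_{\rho,n-\lambda}(g)\,\widetilde\Phi F\big)(\overline n),
\end{gather*}
which is exactly $\widetilde\Phi\circ\pi_{w\rho,n-\lambda}(g)=\pi_{\rho,n-\lambda}(g)\circ\widetilde\Phi$. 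Since $\widetilde\Phi$ acts on the values by $\rho(-e_1)$, we have $\widetilde\Phi = \rho(-e_1)\circ(\,\cdot\,)$, and composing with $J_{\rho,\lambda}$ gives $I_{\rho,\lambda}=\widetilde\Phi\circ J_{\rho,\lambda}$, whence
\begin{gather*}
I_{\rho,\lambda}\circ\pi_{\rho,\lambda}(g)
= \widetilde\Phi\circ J_{\rho,\lambda}\circ\pi_{\rho,\lambda}(g)
= \widetilde\Phi\circ\pi_{w\rho,n-\lambda}(g)\circ J_{\rho,\lambda}
= \pi_{\rho,n-\lambda}(g)\circ I_{\rho,\lambda}.
\end{gather*}

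I expect the only genuinely delicate point to be bookkeeping rather than substance: one must be careful that $\rho(-e_1)$ intertwines $w\rho$ with $\rho$ in the direction needed after taking inverses of $m$, and that the character $\chi_{n-\lambda}$ and the trivial $N$-action are literally unchanged by the construction — both are immediate since $\Phi$ only touches the $\mathbf M$-factor. Strictly speaking one should also note that $\widetilde\Phi$ maps the domain of $J_{\rho,\lambda}$ (sections, or functions in the appropriate symmetry space) into the correct target space; this is automatic because $\rho(-e_1)$ is a linear isomorphism of $\mathbb S$ commuting with nothing extra. Thus the proof reduces to the short computation above, invoking Proposition~\ref{wrhoeqrho} and the defining intertwining property of $J_{\rho,\lambda}$.
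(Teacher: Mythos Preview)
Your proof is correct and follows essentially the same approach as the paper: the paper asserts (citing functoriality of induction) that $\rho(-e_1)\circ\pi_{w\rho,\mu}(g)=\pi_{\rho,\mu}(g)\circ\rho(-e_1)$ and then chains this with the Knapp--Stein intertwining relation exactly as you do. Your explicit verification in the non-compact picture of the step the paper calls ``by induction'' is a welcome expansion rather than a different argument.
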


\begin{proof}
First, by induction, Proposition~\ref{wrhoeqrho} implies for any $\mu\in \mathbb C$
\begin{gather*}
\rho(-e_1) \circ \pi_{w \rho,\mu}(g) = \pi_{ \rho,\mu}(g)\circ \rho(-e_1) .
\end{gather*}
Hence
\begin{gather*}
I_{\rho,\lambda}\circ\pi_{\rho,\lambda}(g) = \rho(-e_1) \circ J_{\rho,\lambda}\circ \pi_{\rho,\lambda}(g) = \rho(-e_1)\circ \pi_{w \rho, n-\lambda}(g) \circ J_{\rho,\lambda}
\\ \hphantom{I_{ \rho,\lambda}\circ\pi_{\rho,\lambda}(g) }
{}=\pi_{ \rho,n-\lambda}(g) \circ \rho(-e_1)\circ J_{\rho,\lambda} = \pi_{\rho, n-\lambda}(g) \circ I_{\rho,\lambda}. \tag*{\qed}
\end{gather*}
\renewcommand{\qed}{}
\end{proof}

The expression of the corresponding Knapp--Stein operator in the non-compact picture is given by
\begin{gather*}
J_{ \rho,\lambda} F(\overline n_x) = \int_{\mathbb R^n} {\rm e}^{-( 2n-2\lambda)\ln a(w^{-1} \overline n_y)} \rho\big(m\big(w^{-1} \overline n_y\big)
\big) F(\overline n_{x+y})\,{\rm d}y ,
\end{gather*}
which, using Proposition~\ref{Gelfanddec}, can be rewritten more explicitly as
\begin{gather*}
J_{ \rho,\lambda} F(\overline n_x) = \int_{\mathbb R^n} \vert y\vert^{-2n+2\lambda} \rho\bigg({-}e_1\frac{y}{\vert y\vert}\bigg)F(\overline n_{x+y})\,{\rm d}y .
\end{gather*}
In turn, the operator $I_{\rho, \lambda} = \rho(-e_1)\circ J_\lambda$ is given by
\begin{gather*}\label{KS}
 I_{\rho,\lambda} F( \overline n_x) = \int_{\mathbb R^n} \vert y\vert^{-2n+2\lambda} \rho\bigg(\frac{y}{\vert y \vert}\bigg) F(\overline n_{x-y})\,{\rm d}y ,
\end{gather*}
after the change of variables $y\mapsto y'=-y$. The Knapp--Stein operator $I_{ \rho,\lambda}$ is thus shown to be a~convolution operator on $\overline N$, or otherwise said over $\mathbb R^n$. Notice that these operators were already introduced and studied in~\cite{co}.

Now consider simultaneously $\mathbb S$ and its dual $\mathbb S'$. For $\lambda, \mu \in \mathbb C$ the corresponding induced representations are
\begin{gather*}
\pi_\lambda = \Ind_{\bf P}^{\bf G} \rho\otimes\chi_\lambda\otimes 1, \qquad
\pi'_\mu = \Ind_{\bf P}^{\bf G} \rho'\otimes \chi_\mu\otimes 1 .
\end{gather*}
Simplifying the notation, the corresponding intertwining operators	are
\begin{gather}
I_\lambda f(x) = \int_{\mathbb R^n}\vert y\vert^{-2n+2\lambda}\rho\bigg( \frac{y}{\vert y\vert}\bigg) f(x-y)\,{\rm d}y , \nonumber
\\[1ex]
I_\mu' f(x) = \int_{\mathbb R^n} \vert y\vert^{-2n+2\mu}\rho' \bigg( \frac{y}{\vert y\vert}\bigg)f(x-y)\,{\rm d}y .\label{defKS}
\end{gather}

Finally, consider the \lq\lq outer\rq\rq\ tensor product $\rho\otimes \rho'$ as a representation of $\bf M\times \bf M$ and, for~$\lambda, \mu\in \mathbb C$ form the tensor product representation
\begin{gather*}
\pi_\lambda \otimes \pi'_\mu= \Ind_{\bf P\times \bf P}^{\bf G\times \bf G}(\rho\otimes\chi_\lambda\otimes 1)\otimes (\rho'\otimes \chi_\mu\otimes 1) .
\end{gather*}

\begin{prop}
The operator $I_\lambda \otimes I'_\mu$ intertwines the representations $\pi_\lambda \otimes \pi'_\mu$ and $\pi_{n-\lambda} \otimes \pi'_{n-\mu}$ of $\bf G\times \bf G$.
\end{prop}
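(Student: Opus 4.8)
The plan is to deduce the statement from the single-factor intertwining property of the Knapp--Stein operators established just above, together with the elementary fact that an outer tensor product of intertwining maps intertwines the outer tensor product of the representations.

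First I would observe that $(\mathbb S',\rho')$ is itself a Clifford module for $\ccl(E)$, so every construction of Section~\ref{S33} applies with $\rho$ replaced by $\rho'$. In particular, the analogue of Proposition~\ref{wrhoeqrho} holds: the computation $w\rho'(m)=\rho'(e_{n+1}e_1me_1e_{n+1})=\rho'(e_1)\rho'(m)\rho'(-e_1)$ uses only that $e_{n+1}$ anticommutes with $e_1$ and commutes with $\mathbf M$, so $w\rho'\simeq\rho'$ via $\rho'(-e_1)$, and hence, by the same proof as the (unlabelled) Proposition above establishing $I_{\rho,\lambda}\circ\pi_{\rho,\lambda}(g)=\pi_{\rho,n-\lambda}(g)\circ I_{\rho,\lambda}$, the operator $I'_\mu=I_{\rho',\mu}$ satisfies $I'_\mu\circ\pi'_\mu(g)=\pi'_{n-\mu}(g)\circ I'_\mu$ for all $g\in\mathbf G$ (recall $\pi'_\mu=\pi_{\rho',\mu}$, $\pi'_{n-\mu}=\pi_{\rho',n-\mu}$). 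This provides the two ``building blocks'': $I_\lambda$ intertwines $\pi_\lambda$ with $\pi_{n-\lambda}$, and $I'_\mu$ intertwines $\pi'_\mu$ with $\pi'_{n-\mu}$.

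Next I would combine them. For $(g_1,g_2)\in\mathbf G\times\mathbf G$ and elementary tensors $f_1\otimes f_2$ in the space of $\pi_\lambda\otimes\pi'_\mu$,
\begin{gather*}
(I_\lambda\otimes I'_\mu)\circ\big(\pi_\lambda(g_1)\otimes\pi'_\mu(g_2)\big)(f_1\otimes f_2)
= \big(I_\lambda\pi_\lambda(g_1)f_1\big)\otimes\big(I'_\mu\pi'_\mu(g_2)f_2\big)
\\
{}= \big(\pi_{n-\lambda}(g_1)I_\lambda f_1\big)\otimes\big(\pi'_{n-\mu}(g_2)I'_\mu f_2\big)
= \big(\pi_{n-\lambda}(g_1)\otimes\pi'_{n-\mu}(g_2)\big)\circ(I_\lambda\otimes I'_\mu)(f_1\otimes f_2),
\end{gather*}
where the middle equality uses the two single-factor identities. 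Since such elementary tensors span a dense subspace, the intertwining relation $(I_\lambda\otimes I'_\mu)\circ(\pi_\lambda\otimes\pi'_\mu)(g_1,g_2)=(\pi_{n-\lambda}\otimes\pi'_{n-\mu})(g_1,g_2)\circ(I_\lambda\otimes I'_\mu)$ holds on the whole space.

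The argument is essentially formal, so there is no serious obstacle; the only care required is bookkeeping. One should fix a range of $(\lambda,\mu)$ for which the convolution integrals in~\eqref{defKS} converge (and then extend by meromorphic continuation in $(\lambda,\mu)$), so that $I_\lambda\otimes I'_\mu$, $\pi_\lambda\otimes\pi'_\mu$ and $\pi_{n-\lambda}\otimes\pi'_{n-\mu}$ are simultaneously defined on a common dense domain; and one should keep in mind that $\mathbf G\times\mathbf G$ acts factorwise on $C^\infty(\mathbb R^n\times\mathbb R^n,\mathbb S\otimes\mathbb S')$, so that $\pi_\lambda(g_1)\otimes\pi'_\mu(g_2)$ really is the value of the outer tensor product at $(g_1,g_2)$. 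In short, the proposition is just the ``product'' of the two intertwining properties already proved.
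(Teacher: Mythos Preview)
Your argument is correct and is exactly the reasoning the paper implicitly relies on: the proposition is stated there without proof, as an immediate consequence of the single-factor intertwining relations for $I_\lambda$ and $I'_\mu$ combined via the outer tensor product. Your remark that the whole of Section~\ref{S33} applies verbatim to the Clifford module $(\mathbb S',\rho')$ is the only point worth making explicit, and you did so.
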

The diagonal subgroup of $\bf G \times \bf G$ will be denoted simply by $\bf G$, and viewed as acting diagonally on $\mathbb R^n\times \mathbb R^n$. Needless to say, the previous proposition implies that $I_\lambda \otimes I'_\mu$ is an intertwining operator for the action of $\bf G$ on $C^\infty(\mathbb R^n\times \mathbb R^n, \mathbb S\otimes\mathbb S')$ by the \lq\lq inner\rq\rq\ tensor product $\pi_\lambda\otimes \pi'_\mu$.

\section{The source operator}

\subsection{Definition of the source operator and the main theorem}

Let $\mathcal M$ be the operator on $C^\infty(\mathbb R^n\times \mathbb R^n, \mathbb S\otimes\mathbb S')$ defined for $F$ a smooth function on $\mathbb R^n\times \mathbb R^n$ with values in $\mathbb S\otimes \mathbb S'$ by
\begin{gather*}
\mathcal{M} F(x,y) = \vert x-y\vert^2 F(x,y) .
\end{gather*}

\begin{prop} \label{covM}
 Let $\lambda, \mu\in \mathbb C$. Then for any $g\in \bf G$,
 \begin{gather*}
 \mathcal{M} \circ\big(\pi_\lambda(g) \otimes \pi'_\mu(g)\big) = \big(\pi_{\lambda-1}(g)\otimes \pi'_{\mu-1}(g)\big)\circ \mathcal{M} .
 \end{gather*}
\end{prop}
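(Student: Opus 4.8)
The plan is to reduce the claim to a pointwise computation on the function $(x,y)\mapsto |x-y|^2$, using the explicit formula for the tensor product representation in the non-compact picture together with the transformation rule for the ``conformal square distance'' under the diagonal $\mathbf G$-action. First I would recall that, in the non-compact picture, for $g\in\mathbf G$ one has
\begin{gather*}
\big(\pi_\lambda(g)\otimes\pi'_\mu(g)\big)F(x,y)
= \chi_\lambda\big(a\big(g^{-1}\overline n_x\big)\big)^{-1}\chi_\mu\big(a\big(g^{-1}\overline n_y\big)\big)^{-1}
\big(\rho\otimes\rho'\big)\big(m\big(g^{-1}\overline n_x\big),m\big(g^{-1}\overline n_y\big)\big)^{-1}F\big(g^{-1}(x),g^{-1}(y)\big).
\end{gather*}
Applying $\mathcal M$ on the left multiplies this by $|x-y|^2$, whereas applying $\mathcal M$ first and then $\pi_{\lambda-1}(g)\otimes\pi'_{\mu-1}(g)$ produces the same expression but with $|x-y|^2$ replaced by $|g^{-1}(x)-g^{-1}(y)|^2$ and with the $A$-characters evaluated at shifted parameters $\lambda-1,\mu-1$ instead of $\lambda,\mu$. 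Since the $\mathbf M$-parts are identical on both sides, the proposition is equivalent to the single scalar identity
\begin{gather*}
|x-y|^2 = \chi_{-1}\big(a\big(g^{-1}\overline n_x\big)\big)^{-1}\,\chi_{-1}\big(a\big(g^{-1}\overline n_y\big)\big)^{-1}\,\big|g^{-1}(x)-g^{-1}(y)\big|^2,
\end{gather*}
i.e.\ that $|x-y|^2$ transforms as a conformal density of the appropriate bidegree; here $\chi_{-1}(a_t)={\rm e}^{-2t}$.

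Next I would establish this scalar identity. It suffices to check it on a set of generators of $\mathbf G$, namely $\overline N$, $A$, $\mathbf M$, and the Weyl inversion $w$, since these generate $\mathbf G$ and both sides are cocycle-like in $g$ (the product of the two $A$-factors and the Jacobian-type factor satisfies the chain rule along the $\mathbf G$-action, so it is enough to verify it on generators). For $\overline n_v$: the action is $x\mapsto x+v$ and $a(\overline n_v^{-1}\overline n_x)=a(\overline n_{x-v})=1$, so the identity reduces to $|x-y|^2=|(x-v)-(y-v)|^2$, which is trivial. For $a_t$: the action is $x\mapsto {\rm e}^{-2t}x$, and from $a_t^{-1}\overline n_x = \overline n_{{\rm e}^{2t}x}\,a_t$ (using $\overline N A$ normalization, $a_{-t}\overline n_x a_t=\overline n_{{\rm e}^{2t}x}$ up to the $\ad H$ weight; the precise power comes from $\ad H_{|\overline{\mathfrak n}}=-2$) one reads off $a(a_t^{-1}\overline n_x)=a_t$, so $\chi_{-1}(a_t)^{-2}\,|{\rm e}^{-2t}x-{\rm e}^{-2t}y|^2 = {\rm e}^{4t}\cdot{\rm e}^{-4t}|x-y|^2=|x-y|^2$. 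For $m\in\mathbf M$: the action is $x\mapsto mxm^{-1}$, the $A$-part is trivial, and $|mxm^{-1}-mym^{-1}|^2=|x-y|^2$ because $\tau_m\in\SO(n)$. The only substantial case is $w=e_1e_{n+1}$, where I would invoke Proposition~\ref{Gelfanddec}: it gives $m(w^{-1}\overline n_x)=-e_1\tfrac{x}{|x|}$ and $\ln a(w^{-1}\overline n_x)=\ln|x|$, hence $w$ acts on $\mathbb R^n$ (away from $0$) by the inversion $x\mapsto x'/|x|^2$ where $x'=e_1xe_1$ (note $|x'|=|x|$), and $\chi_{-1}(a(w^{-1}\overline n_x))^{-1}=|x|^{2}$. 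Then the required identity becomes the classical formula
\begin{gather*}
\bigg|\frac{x'}{|x|^2}-\frac{y'}{|y|^2}\bigg|^2 = \frac{|x'-y'|^2}{|x|^2|y|^2} = \frac{|x-y|^2}{|x|^2|y|^2},
\end{gather*}
where the first equality is the standard identity for inversions and the second uses that $x\mapsto x'=e_1xe_1$ is an orthogonal transformation of $\mathbb R^n$ (indeed $s_{e_1}$ up to sign), so it preserves $|x-y|^2$. Rearranging gives exactly $|x-y|^2=|x|^2|y|^2\,|w^{-1}(x)-w^{-1}(y)|^2 = \chi_{-1}(a(w^{-1}\overline n_x))^{-1}\chi_{-1}(a(w^{-1}\overline n_y))^{-1}|w^{-1}(x)-w^{-1}(y)|^2$.

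The main obstacle is purely bookkeeping: getting the $\mathbf M$-factors on the two sides of the asserted operator identity to match exactly so that they cancel. Since $\mathcal M$ is scalar (multiplication by a function, commuting with the $\mathbb S\otimes\mathbb S'$-valued factor $(\rho\otimes\rho')(m(g^{-1}\overline n_x),m(g^{-1}\overline n_y))^{-1}$), this is automatic, and the whole proof collapses to the scalar conformal-weight identity above, whose only non-trivial instance is the Weyl element and is handled by Proposition~\ref{Gelfanddec} plus the elementary inversion identity. One should also remark that the shift $\lambda\mapsto\lambda-1$, $\mu\mapsto\mu-1$ is precisely what absorbs the extra factor $|x|^2|y|^2$ produced by $w$, which is the conceptual content of the statement: multiplication by $|x-y|^2$ raises the induction parameters by one on each factor.
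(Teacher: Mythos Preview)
Your proof is correct and follows essentially the same route as the paper: both reduce the operator identity to the conformal covariance law for $|x-y|^2$ (the paper's equation~\eqref{cov1}), verify it on the generators $\overline N$, $\mathbf M$, $A$, $w$, and appeal to Proposition~\ref{Gelfanddec} for the only nontrivial case $w$. Two cosmetic slips worth fixing: in the $A$-check you should have $a\big(a_t^{-1}\overline n_x\big)=a_{-t}$ and $a_t^{-1}(x)=e^{2t}x$ (your two sign errors cancel, so the conclusion is unaffected), and in the final sentence $\mathcal M$ \emph{lowers} the parameters by one, not raises them.
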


\begin{proof} The result is a consequence of the following \emph{covariance property} of the function $\vert x-y\vert^2$ under a conformal transformation $g\in \bf G$
\begin{gather}\label{cov1}
\vert g(x)-g(y)\vert^2 = {\rm e}^{-2\ln a(g,\, x)} \vert x-y\vert^2 {\rm e}^{-2\ln a(g,\, y)} ,
\end{gather}
where $a(g,x) = a(g\bar{n}_x)$. This is equivalent to the more classical formula
\begin{gather}\label{cov2}
\vert g(x)-g(y)\vert^2 = \kappa(g,x)\, \vert x-y\vert^2\, \kappa(g,y),
\end{gather}
where $\kappa(g,x)$ stands for the conformal factor of $g$ at $x$. The equivalence of~\eqref{cov1} and~\eqref{cov2} comes from the relation
\begin{gather*}
\kappa(g,x) = \chi_1\big(a(g,x) ^{-1}\big) = {\rm e}^{-2\ln a(g,x)}.
\end{gather*}
 The group $\mathbf G$ is generated by $\overline N$, $\mathbf M$, $A$ and $w$. The equality is easy to verify for $g\in \overline N$, $\mathbf M$, and $A$ and follows for $w$ from the Gelfand--Naimark decomposition obtained in Proposition~\ref{Gelfanddec}.
 The proof of the intertwining property is then straightforward. In fact, let $g\in \mathbf G$ and $F\in C^\infty(\mathbb R^n\times \mathbb R^n, \mathbb S\otimes \mathbb S')$. Then
 \begin{gather*}
 \mathcal{M}\circ \big(\pi_\lambda(g)\otimes \pi'_\mu(g)\big)F (x,y)
= \vert x-y\vert^2 \mathrm{e}^{-2\lambda \ln a(g^{-1},\,x)} \mathrm{e}^{-2\mu \ln a(g^{-1},\,y)}
\\ \hphantom{ \mathcal{M}\circ \big(\pi_\lambda(g)\otimes \pi'_\mu(g)\big)F (x,y)=}
{}\times \rho\big(m\big(g^{-1} \overline n_x\big)\big)\otimes \rho\big(m\big(g^{-1}\overline n_y\big)\big) F\big(g^{-1}(x), g^{-1}(y)\big)
 \end{gather*}
 and by using~\eqref{cov1} this can be transformed as
\begin{gather*}
\mathcal{M}\circ \big(\pi_\lambda(g)\otimes \pi'_\mu(g)\big)F (x,y)
= \big\vert g^{-1}(x) -g^{-1}(y) \big\vert^2 \mathrm{e}^{-2(\lambda-1) \ln a(g^{-1},\,x)}\mathrm{e}^{-2(\mu-1) \ln a(g^{-1},\,y)}
\\ \hphantom{\mathcal{M}\circ \big(\pi_\lambda(g)\otimes \pi'_\mu(g)\big)F (x,y)= }
{}\times \rho\big(m\big(g^{-1} \overline n_x\big)\big)
\otimes \rho\big(m\big(g^{-1}\overline n_y\big)\big)F\big(g^{-1}(x), g^{-1}(y)\big)
\\ \hphantom{\mathcal{M}\circ \big(\pi_\lambda(g)\otimes \pi'_\mu(g)\big)F (x,y)}
{} = (\pi_{\lambda-1}(g) \otimes \pi_{\mu-1}(g)) \mathcal{M} F(x,y) .
 \tag*{\qed}
\end{gather*}
\renewcommand{\qed}{}
\end{proof}

 There is a version of these results in the compact picture. As we are mostly interested in~the non-compact picture, we only sketch the argument and refer to \cite[Proposition 1.1]{bc} for more details. The space $\mathbf G/\mathbf P$ can be identified with the Euclidean unit sphere $S^n$ in $\mathbb R^{n+1}$, the classical stereographic projection from $S^n$ into $\mathbb R^n$ corresponds to the map $gP \mapsto \overline n(g)$. The~group~$\mathbf G$ acts conformally on $S^n$, the function $\vert \widetilde x-\widetilde y\vert_{\mathbb R^{n+1}}^2$ defined for $( \widetilde x,\widetilde y)\in S^n\times S^n$ satisfies a covariance relation under the action of $\mathbf G$, similar to~\eqref{cov2}, where $\kappa(g,x)$ is now replaced by the conformal factor of $g$ at $\widetilde x$. For any $\lambda, \mu\in \mathbb C$, the multiplication by the function $\vert \widetilde x-\widetilde y\vert_{\mathbb R^{n+1}}^2$ induces an~ope\-ra\-tor $\mathcal M_{\lambda, \mu}\colon \mathcal H_\lambda \otimes \mathcal H'_\mu\to \mathcal H_{\lambda-1} \otimes \mathcal H'_{\mu-1}$
 which intertwines $\pi_\lambda\otimes \pi'_{\mu}$ and $\pi_{\lambda-1}\otimes \pi'_{\mu-1}$ and is expressed in the local chart $\overline N\to \bf G/ \bf P$ by the operator~$\mathcal{M}$.

 Now let us consider the operator $E_{\lambda, \mu}$ defined by the following diagram
$$
 \begin{CD}
 \mathcal H_{\lambda}\otimes \mathcal H_{\mu}@>E_{\lambda,\mu} >>\mathcal H_{\lambda+1}\otimes \mathcal H_{\mu+1}\\
 @VI_\lambda\otimes I'_\mu V V
 @AA I_{n-\lambda-1}\otimes I'_{n-\mu-1}A\\
 \mathcal H_{n-\lambda}\otimes \mathcal H_{n-\mu}@>\mathcal{M}_{\lambda, \mu}> >\mathcal H_{n-\lambda-1} \otimes \mathcal H_{n-\mu-1}\\
 \end{CD}
$$

\begin{theor}\label{maintheorem}
 The operator $E_{\lambda, \mu}$ is a \emph{differential operator} on the bundle $\mathbb S_{\rho,\lambda} \times \mathbb S_{\rho'\!,\mu}$, which satisfies, for any $g\in \bf G$
 \begin{gather*}
 E_{\lambda, \mu} \circ \big(\pi_\lambda(g)\otimes \pi'_\mu(g)\big) =\big(\pi_{\lambda+1}(g) \otimes \pi'_{\mu+1}(g)\big)\circ E_{\lambda, \mu}.
 \end{gather*}
\end{theor}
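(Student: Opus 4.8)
The plan is to establish the two assertions of the theorem separately, using the defining diagram for $E_{\lambda,\mu}$. The intertwining property is the easy part and follows by a diagram chase. Indeed, $E_{\lambda,\mu} = \big(I_{n-\lambda-1}\otimes I'_{n-\mu-1}\big)^{-1}\circ \mathcal{M}_{\lambda,\mu}\circ\big(I_\lambda\otimes I'_\mu\big)$, where each of the three arrows in the composition is $\mathbf G$-equivariant: the vertical arrow $I_\lambda\otimes I'_\mu$ by the Proposition on Knapp--Stein operators for the tensor product (intertwining $\pi_\lambda\otimes\pi'_\mu$ with $\pi_{n-\lambda}\otimes\pi'_{n-\mu}$), the bottom arrow $\mathcal{M}_{\lambda,\mu}$ by Proposition~\ref{covM} (in its compact-picture form, intertwining $\pi_{n-\lambda}\otimes\pi'_{n-\mu}$ with $\pi_{n-\lambda-1}\otimes\pi'_{n-\mu-1}$), and the right arrow $I_{n-\lambda-1}\otimes I'_{n-\mu-1}$ again by the Knapp--Stein Proposition (intertwining $\pi_{n-\lambda-1}\otimes\pi'_{n-\mu-1}$ with $\pi_{\lambda+1}\otimes\pi'_{\mu+1}$). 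Composing the three equivariance relations and inverting the last one gives exactly $E_{\lambda,\mu}\circ\big(\pi_\lambda(g)\otimes\pi'_\mu(g)\big) = \big(\pi_{\lambda+1}(g)\otimes\pi'_{\mu+1}(g)\big)\circ E_{\lambda,\mu}$ for all $g\in\mathbf G$. One small point to address: the operator $I_{n-\lambda-1}\otimes I'_{n-\mu-1}$ need not be literally invertible for all $\lambda,\mu$, so the argument should be phrased either for generic $\lambda,\mu$ (where the composition $\big(I_{n-\lambda-1}\otimes I'_{n-\mu-1}\big)\circ\big(I_\lambda\otimes I'_\mu\big)$ is a nonzero scalar, by the standard Knapp--Stein composition/functional-equation identity), and then extended to all $\lambda,\mu$ by the fact that $E_{\lambda,\mu}$ will be shown to depend polynomially (holomorphically) on the parameters.

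The substantive part is showing that $E_{\lambda,\mu}$ is a \emph{differential} operator. The strategy is the Fourier-analytic one characteristic of the source operator method: realize everything on $\mathbb R^n\times\mathbb R^n$ in the non-compact picture, where $I_\lambda$ and $I'_\mu$ act as convolution operators with kernels $|y|^{-2n+2\lambda}\rho(y/|y|)$ and $|y|^{-2n+2\mu}\rho'(y/|y|)$ respectively (see \eqref{defKS}), and $\mathcal M$ acts as multiplication by $|x-y|^2$. Passing to the Fourier transform in both $\mathbb R^n$ variables, convolution becomes multiplication by the (matrix-valued, Clifford) Fourier symbol of the Knapp--Stein kernel, which is classically a homogeneous expression of the form $c(\lambda)\,|\xi|^{2\lambda-2n}$ times a Clifford element linear in $\xi/|\xi|$ (this is the Riesz/Knapp--Stein symbol computation, done in the ambient-space approach of Section~\ref{S33}); and multiplication by $|x-y|^2 = |x|^2 - 2\langle x,y\rangle + |y|^2$ becomes a second-order differential operator in $(\xi,\eta)$ with polynomial coefficients. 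Therefore, on the Fourier side, $\widehat{E_{\lambda,\mu}}$ is the composition: multiply by the symbol of $I_\lambda\otimes I'_\mu$, apply the second-order differential operator coming from $\mathcal M$, then multiply by the symbol of $I_{n-\lambda-1}\otimes I'_{n-\mu-1}$. The product of the two Knapp--Stein symbols is, after the constant-factor bookkeeping, a rational—indeed, after simplification, \emph{polynomial}—expression in $(\xi,\eta)$ with coefficients depending on $\lambda,\mu$: the powers $|\xi|^{2\lambda-2n}$ and $|\xi|^{2(n-\lambda-1)-2n} = |\xi|^{-2\lambda-2}$ combine with the extra $|\xi|^2$, $|\eta|^2$ and cross terms produced when the differential operator from $\mathcal M$ hits these homogeneous factors, and all negative powers of $|\xi|$, $|\eta|$ cancel. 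The key algebraic input making the cancellation of the singular factors work cleanly is the relation $\rho(y/|y|)^2 = -1$ (equivalently $\rho(\xi)\rho'(\xi)$ reduces via the Clifford relations), together with the identity \eqref{sumeiei} / the operator $L$ relation \eqref{rhorho'3} governing how the Clifford factors from the two sides interact. Once $\widehat{E_{\lambda,\mu}}$ is identified as multiplication by a polynomial in $(\xi,\eta)$ (with $\mathbb S\otimes\mathbb S'$-endomorphism coefficients, polynomial in $\lambda,\mu$), transforming back shows $E_{\lambda,\mu}$ is a bi-differential operator with polynomial coefficients — and in fact of degree $4$, as announced around \eqref{Elambdamu}.

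The main obstacle, and where the real work lies, is the explicit symbol computation: one must compute precisely the Fourier symbol of the Clifford-valued Knapp--Stein kernel $|y|^{-2n+2\lambda}\rho(y/|y|)$ (getting the exact meromorphic constant $c(\lambda)$ and confirming the symbol is $\rho(\xi/|\xi|)$ up to $|\xi|$-powers and a scalar), and then carry out the composition on the Fourier side carefully enough to verify that \emph{every} non-polynomial factor cancels. This is exactly the point where the paper introduces its \emph{ad hoc} symbolic calculus (Section~\ref{S43}) to organize the bookkeeping; in particular one needs to track how the second-order operator from $\mathcal M$ — which includes $\Delta_\xi$, $\Delta_\eta$ and the mixed $\langle\nabla_\xi,\nabla_\eta\rangle$ term — acts on products of the form $|\xi|^{a}\rho(\xi)\otimes|\eta|^{b}\rho'(\eta)$, using the Clifford anticommutation relations and \eqref{sumeiei} to collapse the resulting Clifford words. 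Modulo this computation (which I would present via the symbolic calculus rather than by brute force), the differential-operator claim follows, and combined with the diagram chase above it yields the covariance relation for $E_{\lambda,\mu}$; the residual dependence on generic versus special parameters is handled by holomorphy in $(\lambda,\mu)$.
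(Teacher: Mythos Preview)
Your high-level plan---intertwining via the defining diagram, differential property via a Fourier/symbol computation showing that the homogeneous singularities cancel---matches the paper's strategy. But two points need correction, and the paper's route differs from yours in a way worth noting.

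First, a minor misreading of the diagram: the right vertical arrow points \emph{upward}, so $E_{\lambda,\mu}=(I_{n-\lambda-1}\otimes I'_{n-\mu-1})\circ\mathcal M_{\lambda,\mu}\circ(I_\lambda\otimes I'_\mu)$ with no inverse; the chase is then even simpler than you wrote.

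Second, and more seriously, your assertion that ``$\widehat{E_{\lambda,\mu}}$ is multiplication by a polynomial in $(\xi,\eta)$'' is wrong as stated. The explicit formula~\eqref{Elambdamu} shows $E_{\lambda,\mu}$ has \emph{non-constant} coefficients (the terms $|x-y|^2\Delta_x\!\otimes\!\Delta_y$, $\rho(x-y)\fsl D_x\!\otimes\!\Delta_y$, etc.), so its Fourier conjugate is itself a genuine second-order differential operator in $(\xi,\zeta)$, not a multiplier. What you must show is that the \emph{symbol} in the sense of Section~\ref{S43} is polynomial in the full set of variables $(x,y,\xi,\zeta)$; equivalently, after expanding your composition (multiplier)$\circ$(Laplacian-type operator from $\mathcal M$)$\circ$(multiplier) on the Fourier side via Leibniz, the non-polynomial homogeneous factors must cancel \emph{term by term}, not collapse to a pure multiplier.

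Finally, the paper does not compute the triple composition you describe. It instead proves the two-versus-two ``main formula''~\eqref{estrr}, namely $(\fsl{\mathcal R}_s\otimes\fsl{\mathcal R}'_t)\circ\mathcal M=c(s,t)\,F_{s,t}\circ(\fsl{\mathcal R}_{s+2}\otimes\fsl{\mathcal R}'_{t+2})$, from which $F_{s,t}$ is read off directly as a differential operator (Propositions~\ref{symb-F-s-t} and the one following); setting $s=-2\lambda-2$, $t=-2\mu-2$ gives $E_{\lambda,\mu}=F_{s,t}$. This is cleaner than your triple product because only \emph{one} Clifford--Riesz symbol is differentiated, and formulas~\eqref{partial1},~\eqref{partial2} do all the work. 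The covariance is then \emph{not} obtained by the bare diagram chase but proved separately (Theorems~\ref{covE1th} and~\ref{thcovE2}): one passes to the infinitesimal level, shows the defect $A_{\lambda,\mu}(X)$ kills $\widetilde I_{n-\lambda}\otimes\widetilde I'_{n-\mu}$, and invokes a cancellation lemma (a polynomial-coefficient differential operator annihilating a convolution with a.e.-invertible symbol must vanish). The paper explains why this detour is needed: in the non-compact picture the smooth vectors do not interact well with the Fourier transform, so the diagram chase, while morally correct, is not directly rigorous there.
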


 This is the main theorem of the article. The operator is named the \emph{source operator} as it is the key to the construction of the SBDO as we will show later. The fact that $E_{\lambda,\mu}$ is $\bf G$-intertwining is a consequence of the definition. The fact that it is a differential operator is much more subtle and will be shown by working in the non-compact picture. There is however some difficulty when using the non-compat picture, due to the fact that the space of $C^\infty$ vectors in~the non-compact picture is not very manageable, especially when using the Fourier transform on~$\mathbb R^n$. Hence we~have to use a slightly different path to construct and explicitly calculate the local expression of~the operator $E_{\lambda, \mu}$. Coming back to the compact picture, for generic $\lambda$ (resp.~$\mu$) the Knapp--Stein operator $I_\lambda$ is invertible, and up to a non-zero constant multiple, its inverse is equal to $I_{n-\lambda}$. So~that the operator $E_{\lambda, \mu}$ (up to constant multiple) satisfies the relation
\begin{gather*}
\big(I_{n-\lambda-1} \otimes I_{n-\mu-1}\big) \circ \mathcal{M} = E_{\lambda, \mu} \circ \big(I_{n-\lambda} \otimes I_{ n-\mu}\big) .
\end{gather*}
This is the way we will introduce and calculate the expression of the source operator in the non-compact picture (cf.~Section~\ref{S44}).

\subsection{Riesz distributions for Clifford modules}\label{S42}
Up to this point, the intertwining operators are only formally defined and we need to look more carefully to the convolution kernels of the Knapp--Stein operators.

First recall the classical \emph{Riesz distributions}. For $s\in \mathbb C$, the \emph{Riesz distribution} $r_s$ on $\mathbb R^n$ is given by
\begin{gather*}
r_s(x) = \vert x\vert^s .
\end{gather*}
More precisely, for $\Re(s)>-n$, the function $r_s$ is locally integrable and has moderate growth at infinity, so that $r_s$ is a well-defined tempered distribution. The family of distributions thus defined can be extended analytically in the parameter $s\in \mathbb C$, with poles at $ -n-2k$, $k\in \mathbb N$.

Let $(\mathbb S, \rho)$ be a Clifford module and for $s\in \mathbb C$ define the associated \emph{Clifford--Riesz distribu\-tion}~by
\begin{gather}\label{CliffordRiesz}
\fsl{r}_s(x) = \vert x\vert^s \rho\bigg(\frac{x}{\vert x\vert}\bigg)= \vert x\vert^{s-1} \rho(x) .
\end{gather}
Let $E_j = \rho(e_j)$, $1\leq j\leq n$. Then, for $x=\sum_{j=1}^n x_je_j$, $\rho(x) = \sum_{j=1}^n x_j E_j$. Use the identity
\begin{gather*}
x_j \vert x\vert^{s-1} = \frac{1}{s+1} \frac{\partial}{\partial x_j} \big(\vert x\vert^{s+1}\big) ,
\end{gather*}
to conclude that
\begin{gather*}
\fsl{r}_s(x) = \frac{1}{s+1} \sum_{j=1}^n \frac{\partial \,r_{s+1}}{\partial x_j} (x) E_j .
\end{gather*}
From this expression it is easy to deduce the next statement.
\begin{prop} The family $\fsl{r}_s$ defined by~\eqref{CliffordRiesz} is a meromorphic family of $\End(\mathbb S)$-valued tempered distributions with poles at $s=-n-1-2k$, $k\in \mathbb N$.
\end{prop}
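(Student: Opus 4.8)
The plan is to exploit the explicit formula
\[
\fsl{r}_s(x) = \frac{1}{s+1} \sum_{j=1}^n \frac{\partial \,r_{s+1}}{\partial x_j}(x)\, E_j
\]
established just above the statement, which expresses the Clifford--Riesz distribution in terms of the classical Riesz family $r_{s+1}$. Since the $E_j = \rho(e_j)$ are fixed endomorphisms of $\mathbb S$ (not depending on $s$), the meromorphy of $\fsl{r}_s$ as an $\End(\mathbb S)$-valued tempered distribution reduces immediately to the meromorphy of the scalar family $s \mapsto r_{s+1}$ together with the explicit prefactor $\tfrac{1}{s+1}$. The partial derivatives $\partial/\partial x_j$ are continuous operators on the space of tempered distributions, hence commute with analytic continuation, so $\partial r_{s+1}/\partial x_j$ is meromorphic in $s$ with poles exactly where $r_{s+1}$ has poles.

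The key steps, in order: First I recall the classical fact (stated in the excerpt) that $s \mapsto r_s$ extends to a meromorphic family of tempered distributions with simple poles at $s = -n-2k$, $k \in \mathbb N$; consequently $s \mapsto r_{s+1}$ is meromorphic with poles at $s = -n-1-2k$, $k \in \mathbb N$. Second, I apply $\partial/\partial x_j$, which preserves this pole structure (differentiation cannot create new poles and, since the residue of $r_{s+1}$ at each pole is a nonzero multiple of a derivative of the Dirac delta, differentiation does not kill the poles either). Third, I multiply by $\tfrac{1}{s+1}$: since $s=-1$ is not among the points $-n-1-2k$ for $n \geq 2$, and for the degenerate small-$n$ cases one checks directly that the numerator $\sum_j \partial r_{s+1}/\partial x_j \otimes E_j$ actually vanishes at $s=-1$ (there $r_2(x)=|x|^2$ and $\partial_j |x|^2 = 2x_j$, so $\fsl r_{-1}(x) = \tfrac12 \sum_j 2x_j E_j \cdot \tfrac{1}{?}$ — rather, one simply notes $\fsl r_{-1}(x) = |x|^{-2}\rho(x)$ is the honest locally integrable function for $n\geq 3$), the factor $\tfrac{1}{s+1}$ introduces no pole. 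Finally, assembling, $\fsl{r}_s = \tfrac{1}{s+1}\sum_j (\partial_j r_{s+1}) E_j$ is a finite sum of meromorphic $\End(\mathbb S)$-valued distribution-families, hence meromorphic, with poles contained in $\{-n-1-2k : k \in \mathbb N\}$.

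I expect the only delicate point to be verifying that the poles are not cancelled by the scalar factor $\tfrac{1}{s+1}$ and, conversely, that they are genuinely present (the statement asserts poles \emph{at} these points, not merely \emph{at most} there). For the non-cancellation, one checks that $s=-1$ lies outside the candidate pole set for the relevant dimensions, or equivalently that $\fsl r_{-1}$ is a well-defined distribution directly. For genuineness, one computes the residue: at $s = -n-1-2k$ the residue of $r_{s+1}$ at the corresponding point of its parameter is a nonzero constant times $\Delta^k \delta_0$, so the residue of $\fsl r_s$ is a nonzero constant times $\sum_j \partial_j(\Delta^k\delta_0)\, E_j = \rho(\nabla)\Delta^k \delta_0$, which is nonzero since $\rho(\nabla) = \sum_j E_j \partial_j$ is injective on distributions (its square is $-\Delta$, an injective operator up to the obvious kernel which is avoided here). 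This residue computation is routine once one has the classical residues of the Riesz family in hand, so the proof is short.
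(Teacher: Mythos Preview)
Your approach is exactly what the paper intends: it merely remarks that the formula $\fsl{r}_s = \frac{1}{s+1}\sum_j (\partial_j r_{s+1})\,E_j$ makes the statement ``easy to deduce'' from the classical Riesz theory, and you have spelled out that deduction. One small cleanup: your treatment of the potential pole at $s=-1$ is muddled (you momentarily compute $r_2$ where $r_0$ is meant, and local integrability of $|x|^{-2}\rho(x)$ already holds for $n\geq 2$, not only $n\geq 3$); the cleanest way through is to note that $\fsl{r}_s$ is given by a locally integrable, tempered function for $\Re(s)>-n$, so $s=-1$ is automatically regular for $n\geq 2$, while for $n=1$ the odd distribution $\operatorname{sgn}(x)\,|x|^s$ is well known to have poles only at $s=-2,-4,\dots$.
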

Further properties of these distributions will be needed in the sequel. Parts of the present results were already obtained in~\cite{co} and in~\cite{fos}.
\begin{prop}
\begin{gather}%\label{partial1}
\partial_j\, \fsl{r}_s(x) = \big((s-1)x_j-\rho(e_jx)\big) \fsl{r}_{s-2}(x),\label{partial1}
\\
\Delta \fsl{r}_s(x) = (s-1)(s+n-1)\, \fsl{r}_{s-2}(x).\label{partial2}
\end{gather}

\end{prop}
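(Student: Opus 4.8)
The plan is to prove both identities by direct computation starting from the second expression in~\eqref{CliffordRiesz}, namely $\fsl{r}_s(x) = \vert x\vert^{s-1}\rho(x)$, and to exploit the fact that $\rho(x) = \sum_j x_j E_j$ is linear in~$x$ while the only source of nontrivial differentiation is the scalar factor $\vert x\vert^{s-1}$. The basic building block is the elementary formula $\partial_j \vert x\vert^{s-1} = (s-1) x_j \vert x\vert^{s-3}$, valid as an identity of meromorphic families of tempered distributions (away from the poles) by the same analytic-continuation argument already used to define the Riesz family $r_s$. I would first record this, then treat~\eqref{partial1} and~\eqref{partial2} in turn.

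For~\eqref{partial1}, applying the Leibniz rule gives
\begin{gather*}
\partial_j \fsl{r}_s(x) = \partial_j\big(\vert x\vert^{s-1}\big)\rho(x) + \vert x\vert^{s-1}\partial_j\big(\rho(x)\big)
= (s-1) x_j \vert x\vert^{s-3}\rho(x) + \vert x\vert^{s-1} E_j .
\end{gather*}
In the first term $\vert x\vert^{s-3}\rho(x) = \fsl{r}_{s-2}(x)$, so that term is $(s-1)x_j\,\fsl{r}_{s-2}(x)$. For the second term I would write $\vert x\vert^{s-1} E_j = \vert x\vert^{s-3}\big(\vert x\vert^2 E_j\big)$ and use the Clifford relation $\rho(x)\rho(x) = -\vert x\vert^2$, i.e.\ $\vert x\vert^2 = -\rho(x)^2 = -\rho(x)\rho(e_j)\rho(e_j)^{-1}\cdot(\cdots)$; more simply, from $\rho(e_j)\rho(x) + \rho(x)\rho(e_j) = -2 x_j$ one gets $\vert x\vert^2 E_j = -\rho(x)\rho(e_j x)$ after a short manipulation using $\rho(e_j x) = \rho(e_j)\rho(x)$ and $\rho(x x) = -\vert x\vert^2$. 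Substituting $\vert x\vert^{s-3}\rho(x) = \fsl{r}_{s-2}(x)$ turns this into $-\rho(e_j x)\,\fsl{r}_{s-2}(x)$, and combining the two terms yields $\partial_j \fsl{r}_s(x) = \big((s-1)x_j - \rho(e_j x)\big)\fsl{r}_{s-2}(x)$, which is~\eqref{partial1}. The one point requiring care is the order of the factors, since $\rho(x)$ and $\fsl{r}_{s-2}(x)$ are operator-valued; keeping $\rho(e_j x)$ on the left throughout resolves this.

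For~\eqref{partial2} I would compute $\Delta \fsl{r}_s = \sum_{j=1}^n \partial_j \partial_j \fsl{r}_s$ by applying~\eqref{partial1} twice. Differentiating $\partial_j\fsl{r}_s = \big((s-1)x_j - \rho(e_j x)\big)\fsl{r}_{s-2}$ once more in $x_j$, the Leibniz rule produces three contributions: $(s-1)\delta_{jj}\fsl{r}_{s-2}$ from $\partial_j\big((s-1)x_j\big)$, the term $-\rho(e_j e_j)\fsl{r}_{s-2} = \fsl{r}_{s-2}$ from $\partial_j\big(-\rho(e_j x)\big)$ (using $\rho(e_j)^2 = -1$), and $\big((s-1)x_j - \rho(e_j x)\big)\partial_j\fsl{r}_{s-2} = \big((s-1)x_j-\rho(e_j x)\big)^2\fsl{r}_{s-4}$ from differentiating the second factor. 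Summing over $j$, the first group gives $n(s-1)\fsl{r}_{s-2}$, the second gives $n\,\fsl{r}_{s-2}$, and the cross term expands, using $\sum_j x_j \rho(e_j x) = \rho(x)\rho(x)\cdot(\text{scalar reshuffling}) $ and $\sum_j \rho(e_j x)^2$, to a multiple of $\vert x\vert^2\fsl{r}_{s-4} = \fsl{r}_{s-2}$; collecting the coefficients gives $(s-1)(s+n-1)\fsl{r}_{s-2}$. The main obstacle is precisely the bookkeeping in this last sum: one must carefully evaluate $\sum_{j=1}^n \rho(e_j x)^2$ and $\sum_{j=1}^n x_j\,\rho(e_j x)$ as elements of $\End(\mathbb S)$, where the Clifford relations and the identity $\sum_j \rho(e_j)\rho(y)\rho(e_j) = (n-2)\rho(y)$ for $y\in E$ (a degree-one instance of the Lemma proved earlier in the excerpt, with $E$ Euclidean) are the key inputs. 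Alternatively — and this is the cleaner route I would actually present — one derives~\eqref{partial2} from the scalar identity $\Delta r_{s+1} = (s+1)(s+n-1) r_{s-1}$ together with the expression $\fsl{r}_s = \frac{1}{s+1}\sum_j (\partial_j r_{s+1}) E_j$ given just above the Proposition, since $\Delta$ commutes with $\partial_j$; this reduces~\eqref{partial2} to the classical Laplacian formula for Riesz distributions and avoids the Clifford bookkeeping altogether.
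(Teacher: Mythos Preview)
Your approach to \eqref{partial1} is exactly the paper's: Leibniz rule on $\vert x\vert^{s-1}\rho(x)$, then rewrite $\vert x\vert^{s-1}\rho(e_j)$ using $\rho(x)^2=-\vert x\vert^2$ to factor out $\fsl r_{s-2}$. One slip: your intermediate identity $\vert x\vert^2 E_j=-\rho(x)\rho(e_jx)$ is false (compute $\rho(x)\rho(e_j)\rho(x)$ via $xe_j=-e_jx-2x_j$ and you will find an extra $2x_j\rho(x)$). The correct version, which is what the paper uses and what actually yields the left placement you insist on, is $\vert x\vert^2\rho(e_j)=-\rho(e_jx)\rho(x)$; with this the substitution $\vert x\vert^{s-3}\rho(x)=\fsl r_{s-2}$ goes through cleanly.

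For \eqref{partial2} your first route is again the paper's: differentiate once more and sum over~$j$, reducing everything to $\sum_j x_j^2=\vert x\vert^2$, $\sum_j x_j e_j=x$, and $\sum_j e_jxe_j=(n-2)x$. Note a typo: the second application of \eqref{partial1} gives $\big((s-1)x_j-\rho(e_jx)\big)\big((s-3)x_j-\rho(e_jx)\big)\fsl r_{s-4}$, not a square with $(s-1)$ repeated. Your alternative route, by contrast, is \emph{not} in the paper and is genuinely slicker: writing $\fsl r_s=\frac{1}{s+1}\sum_j(\partial_j r_{s+1})E_j$ and using $\Delta r_{s+1}=(s+1)(s+n-1)r_{s-1}$ together with $[\Delta,\partial_j]=0$ gives $\Delta\fsl r_s=(s+n-1)\sum_j(\partial_j r_{s-1})E_j=(s-1)(s+n-1)\fsl r_{s-2}$ with no Clifford bookkeeping at all. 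This bypasses the $\sum_j e_jxe_j$ identity entirely, at the cost of invoking the classical scalar formula; the paper's direct computation is self-contained but longer.
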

\begin{proof} First
\begin{gather*}
\partial_j \left(\vert x\vert^{s-1} \rho(x)\right) = \left((s-1) x_j \vert x\vert^{s-3}\right)\rho(x)+ \vert x\vert^{s-1} \rho(e_j)
\\ \hphantom{\partial_j \left(\vert x\vert^{s-1} \rho(x)\right) }
{}= \left((s-1) x_j \vert x\vert^{s-3}\right)\rho(x) - \vert x\vert^{s-3} \rho(e_j)\rho(x) \rho(x)
\\ \hphantom{\partial_j \left(\vert x\vert^{s-1} \rho(x)\right) }
{}= \big((s-1)x_j-\rho(e_jx)) \vert x\vert^{s-3}\rho(x) ,
\end{gather*}
and~\eqref{partial1} follows.

Next, using~\eqref{partial1}
\begin{gather*}
\partial_j^2\fsl{r}_s(x) = \big((s\!-1)\!-\rho(e_je_j)\big) \vert x\vert^{s\!-3}\rho(x)
\!+\!\big((s\!-1)x_j\!-\rho(e_jx)\big) \big((s\!-3) x_j\!-\rho(e_jx)\big) \vert x\vert^{s\!-5} \rho(x)
\\ \hphantom{\partial_j^2\fsl{r}_s(x)}
{}= s\vert x \vert^{s-3} \rho(x) + \big( (s-1)(s-3) x_j^2 -(2s-4)x_j\rho(e_jx)+\rho(e_jxe_jx)\big)\vert x\vert^{s-5}\rho(x) .
\end{gather*}
Now sum over $j$ from $j=1$ to $j=n$ and use that
\begin{gather*}
\sum_{j=1}^n x_j^2 = \vert x\vert^2, \qquad
\sum_{j=1}^n x_je_j = x,\qquad
\sum_{j=1}^n e_jxe_j = (n-2) x
\end{gather*}
to get
\begin{gather*}
\Delta \fsl{r}_s(x) = \big(ns\vert x \vert^2+(s-1)(s-3) \vert x\vert^2+(2s-4)\vert x\vert^2-(n-2) \vert x\vert^2\big)\vert x \vert^{s-5}\rho(x)
\\ \hphantom{\Delta \fsl{r}_s(x)}
{}=\big(s^2+(n-2)s-n+1\big) \vert x\vert^{s-3}\rho(x) ,
\end{gather*}
and~\eqref{partial2} follows.
\end{proof}

We will also need the Fourier transform of the Riesz distributions. The Fourier transform of a function $f$ on $E$ is defined by the formula
\begin{gather*}
\mathcal Ff(\xi) = \widehat f(\xi) = \int_E {\rm e}^{-{\rm i}\langle x,\xi\rangle} f(x)\,{\rm d}x.
\end{gather*}
The Fourier transform is an isomorphism of $\mathcal S(E)$ onto $\mathcal S(E)$, the definition of the Fourier transform can be extended by duality to the space of tempered distributions $\mathcal S'(E)$. For $V$ a
 finite-dimensional vector space, denote by $\mathcal S(E,V)$ (resp.~$\mathcal S'(E, V)$) the space of
 $V$--valued Schwartz functions (resp.~tempered distributions). The Fourier transform can also be extended to these spaces.

Recall the following classical result for the usual Riesz distributions, see, e.g.,~\cite{gs}.
\begin{prop}
The Fourier transform of the Riesz distribution $r_s$ is given by
\begin{gather*}
\mathcal F (r_s) (\xi) = c_s r_{-s-n}(\xi) ,
\end{gather*}
where $\displaystyle c_s= 2^{s+n} \pi^{\frac{n}{2}} \frac{\Gamma\big(\frac{s+n}{2}\big)}{\Gamma\big({-}\frac{s}{2}\big)}$.
\end{prop}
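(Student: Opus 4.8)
The plan is to reduce the statement to the classical scalar computation and carry it out in three moves: first pin down the \emph{shape} of $\mathcal F(r_s)$ by symmetry and homogeneity, then compute the \emph{constant} by pairing against a Gaussian, then extend to all $s$ by analytic continuation. I expect none of the steps to be truly hard; the only care needed is in the bookkeeping of poles and in ruling out a contribution supported at the origin.

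First I would work in the strip $-n<\Re(s)<0$, where $r_s$ is an honest tempered distribution. Since the Fourier transform commutes with the action of $\Or(E)$ and sends a distribution homogeneous of degree $s$ to one homogeneous of degree $-s-n$, the distribution $\mathcal F(r_s)$ is $\Or(E)$-invariant and homogeneous of degree $-s-n$. On $\mathbb R^n\setminus\{0\}$ such a distribution must coincide with a scalar multiple of $r_{-s-n}$, and for $s$ in this strip there is no nonzero distribution supported at the origin and homogeneous of degree $-s-n$ (a derivative $\partial^\alpha\delta$ is homogeneous of degree $-n-|\alpha|\le-n$, whereas $\Re(-s-n)>-n$). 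Hence $\mathcal F(r_s)=c_s\,r_{-s-n}$ for a scalar $c_s$ depending only on $s$ and $n$.

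Next I would evaluate $c_s$ by testing against the Gaussian $\varphi(x)={\rm e}^{-\vert x\vert^2}$, whose Fourier transform is $\widehat\varphi(\xi)=\pi^{n/2}{\rm e}^{-\vert\xi\vert^2/4}$. Using the defining relation $\langle\mathcal F(r_s),\varphi\rangle=\langle r_s,\widehat\varphi\rangle$ on tempered distributions, one gets
\begin{gather*}
c_s\int_{\mathbb R^n}\vert\xi\vert^{-s-n}{\rm e}^{-\vert\xi\vert^2}\,{\rm d}\xi=\pi^{n/2}\int_{\mathbb R^n}\vert x\vert^{s}{\rm e}^{-\vert x\vert^2/4}\,{\rm d}x .
\end{gather*}
In polar coordinates one has $\int_{\mathbb R^n}\vert x\vert^{a}{\rm e}^{-t\vert x\vert^2}\,{\rm d}x=\tfrac12\,\mathrm{vol}\big(S^{n-1}\big)\, t^{-(a+n)/2}\,\Gamma\big(\tfrac{a+n}{2}\big)$; substituting $a=-s-n,\ t=1$ on the left and $a=s,\ t=\tfrac14$ on the right, the common factor $\tfrac12\,\mathrm{vol}\big(S^{n-1}\big)$ cancels and one is left with $c_s\,\Gamma\big({-}\tfrac s2\big)=2^{s+n}\pi^{n/2}\,\Gamma\big(\tfrac{s+n}{2}\big)$, which is precisely the claimed value of $c_s$; the power $2^{s+n}=4^{(s+n)/2}$ is exactly what the mismatch between the two Gaussian widths produces.

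Finally I would argue that both sides of $\mathcal F(r_s)=c_s\,r_{-s-n}$ are meromorphic families of $\mathcal S'(\mathbb R^n)$-valued functions of $s\in\mathbb C$: the left-hand side because $r_s$ is, with poles at $s=-n-2k$; and on the right-hand side the poles of $r_{-s-n}$ at $s=2k$ are cancelled by the zeros of $1/\Gamma({-}s/2)$, while the factor $\Gamma\big(\tfrac{s+n}{2}\big)$ reinstates exactly the poles at $s=-n-2k$. Since the identity has been established on the nonempty open strip $-n<\Re(s)<0$, it propagates to all of $\mathbb C$ by uniqueness of meromorphic continuation. As anticipated, the only slightly delicate points are this pole bookkeeping together with the verification that the homogeneity argument leaves no room for a term concentrated at the origin; everything else is the standard polar-coordinates $\Gamma$-integral.
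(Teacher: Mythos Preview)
Your argument is correct and is in fact one of the standard derivations of this identity: the $\Or(n)$-invariance and homogeneity pin down $\mathcal F(r_s)$ up to a scalar on the strip $-n<\Re(s)<0$, the Gaussian pairing fixes the scalar, and meromorphic continuation does the rest. The Gaussian computation checks out with the paper's Fourier convention, and your remark ruling out a $\delta$-type contribution at the origin is the right way to close the homogeneity step.

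As for the comparison: the paper does not actually prove this proposition. It is stated there as a classical fact with a reference to Gel'fand--Shilov (``Recall the following classical result for the usual Riesz distributions, see, e.g.,~[gs]''), and is only used as input for the Fourier transform of the Clifford--Riesz distribution in the next proposition. So you have supplied a self-contained proof where the paper simply quotes the literature; your approach is exactly the kind of argument one finds in the cited source.
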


\begin{prop} The Fourier transform of $\fsl{r}_s$ is given by
\begin{gather}\label{Fourierspin}
\mathcal F \fsl{r}_s = \fsl{c}_s \fsl{r}_{-s-n},
\end{gather}
where
$
\displaystyle\fsl{c}_s= -{\rm i}2^{s+n}\pi^{\frac{n}{2}} \frac{\Gamma\big(\frac{s+n+1}{2}\big)}{\Gamma\big({-}\frac{s-1}{2}\big)} .
$
\end{prop}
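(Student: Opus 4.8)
The plan is to compute the Fourier transform of $\fsl{r}_s(x) = \vert x\vert^{s-1}\rho(x)$ by reducing it to the already-known Fourier transform of the scalar Riesz distributions $r_t$. Recall from the proposition just above that $\fsl{r}_s = \frac{1}{s+1}\sum_{j=1}^n \partial_j r_{s+1}\, E_j$, where $E_j = \rho(e_j)$. Since the Fourier transform turns $\partial_j$ into multiplication by $\mathrm{i}\xi_j$, we get
\begin{gather*}
\mathcal F\fsl{r}_s(\xi) = \frac{1}{s+1}\sum_{j=1}^n (\mathrm{i}\xi_j)\, \mathcal F(r_{s+1})(\xi)\, E_j = \frac{\mathrm{i}\, c_{s+1}}{s+1}\left(\sum_{j=1}^n \xi_j E_j\right) r_{-s-1-n}(\xi),
\end{gather*}
where $c_{s+1} = 2^{s+1+n}\pi^{n/2}\,\Gamma\big(\frac{s+1+n}{2}\big)/\Gamma\big(-\frac{s+1}{2}\big)$. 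Now $\sum_j \xi_j E_j = \rho(\xi)$, and $\vert\xi\vert^{-s-1-n}\rho(\xi) = \vert\xi\vert^{-s-n}\rho(\xi/\vert\xi\vert) = \fsl{r}_{-s-n}(\xi)$ by the very definition \eqref{CliffordRiesz}. Hence $\mathcal F\fsl{r}_s = \fsl{c}_s\,\fsl{r}_{-s-n}$ with $\fsl{c}_s = \dfrac{\mathrm{i}\,c_{s+1}}{s+1}$.

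It then remains to simplify $\fsl{c}_s = \dfrac{\mathrm{i}}{s+1}\, 2^{s+1+n}\pi^{n/2}\, \dfrac{\Gamma\big(\frac{s+n+1}{2}\big)}{\Gamma\big(-\frac{s+1}{2}\big)}$ into the stated form $-\mathrm{i}\, 2^{s+n}\pi^{n/2}\,\Gamma\big(\frac{s+n+1}{2}\big)/\Gamma\big(-\frac{s-1}{2}\big)$. This is pure Gamma-function bookkeeping: using $\Gamma\big(-\frac{s+1}{2}\big) = \Gamma\big(\big({-}\frac{s-1}{2}\big) - 1\big) = \dfrac{\Gamma\big(-\frac{s-1}{2}\big)}{-\frac{s-1}{2}-1} = \dfrac{\Gamma\big(-\frac{s-1}{2}\big)}{-\frac{s+1}{2}}$, so that $\dfrac{1}{s+1}\cdot\dfrac{1}{\Gamma(-\frac{s+1}{2})} = \dfrac{1}{s+1}\cdot\dfrac{-\frac{s+1}{2}}{\Gamma(-\frac{s-1}{2})} = \dfrac{-1/2}{\Gamma(-\frac{s-1}{2})}$; combined with $2^{s+1+n}\cdot\frac12 = 2^{s+n}$ and the factor $\mathrm{i}\cdot(-1) = -\mathrm{i}$ absorbed from the sign, this yields exactly the claimed constant. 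One should also note in passing that both sides are meromorphic families of tempered distributions (the left side by the preceding proposition, the right side by the same token), so the identity, once verified for $\Re(s)$ in a suitable strip where everything converges classically, extends by analytic continuation; the poles match because $\fsl{c}_s$ has poles precisely where $\Gamma\big(\frac{s+n+1}{2}\big)$ does, i.e.\ at $s = -n-1-2k$.

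The computation is entirely routine; there is no real obstacle. The only point requiring a modicum of care is the interchange of Fourier transform with the differentiation identity $\fsl{r}_s = \frac{1}{s+1}\sum_j \partial_j r_{s+1} E_j$ at values of $s$ where $s+1$ is a pole of something or $r_{s+1}$ itself needs analytic continuation — but since all the objects involved are meromorphic families of tempered distributions and the Fourier transform is continuous on $\mathcal S'(E,\End(\mathbb S))$, the identity holds by analytic continuation from the region $-n < \Re(s) < 0$ (say) where $r_{s+1}$ is an honest locally integrable function and the manipulations are justified directly.
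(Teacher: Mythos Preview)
Your proof is correct and follows essentially the same approach as the paper: reduce $\fsl{r}_s$ to a linear combination of scalar Riesz distributions via $\rho(x)=\sum_j x_jE_j$ and invoke the known formula $\mathcal F r_t = c_t\, r_{-t-n}$. The only difference is cosmetic: you use the identity $\fsl{r}_s=\tfrac{1}{s+1}\sum_j(\partial_j r_{s+1})E_j$ and the rule $\mathcal F(\partial_j f)=\mathrm{i}\xi_j\widehat f$, arriving at $\fsl c_s=\tfrac{\mathrm{i}\,c_{s+1}}{s+1}$, whereas the paper writes $\fsl{r}_s(x)=\sum_j x_j\vert x\vert^{s-1}E_j$ directly and uses the dual rule $\mathcal F(x_j f)=\mathrm{i}\partial_{\xi_j}\widehat f$, arriving at $\fsl c_s=\mathrm{i}(-s+1-n)\,c_{s-1}$; both expressions simplify to the stated constant by one application of the functional equation $z\Gamma(z)=\Gamma(z+1)$.
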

\begin{proof}
For any $j$, $1\leq j\leq n$, a basic formula for the Fourier transform yields
\begin{gather*}
\mathcal F\big(x_j \vert x\vert^{s-1}\big)(\xi) = {\rm i}\frac{\partial}{\partial \xi_j} \big(\mathcal F \vert x\vert^{s-1}\big)(\xi)
= {\rm i} c_{s-1} \frac{\partial}{\partial \xi_j} \big(\vert \xi\vert^{-s+1-n}\big)
\\ \hphantom{\mathcal F\big(x_j \vert x\vert^{s-1}\big)(\xi)}
{}= {\rm i}(-s+1-n) c_{s-1} \xi_j\vert \xi\vert^{-s-1-n}= \fsl{c}_s \xi_j \vert \xi\vert^{-s-n-1}
\end{gather*}
and~\eqref{Fourierspin} follows easily by using the linearity of $x\mapsto \rho(x)$.
\end{proof}

\subsection{A symbolic calculus}\label{S43}
This section describes in a general context a symbolic calculus, inspired by the calculus for the Weyl algebra or of the pseudo-differential calculus, but designed for our specific problems to be treated in the next section. In particular, classical pseudo-differential calculus requires regularity of the symbol in the cotangent variable $\xi$, whereas we have to handle symbols which are homogeneous in $\xi$, and hence not necessarily smooth (even singular) at $0$.

Let $E$ be a Euclidean vector space of dimension $n$, and $V$ be a finite-dimensional vector space.

Let $k\in \mathcal S'(E,\End(V))$. Then, for $f\in \mathcal S(E,V)$, the formula
\begin{gather*}
Kf(x) = \int_E k(x-y) f(y)\,{\rm d}y
\end{gather*}
defines a convolution operator $K$ which maps $\mathcal S(E,V)$ into $\mathcal S'(E,V)$.

As in the scalar case, these operators have a nice version through the Fourier transform, namely
\begin{gather*}
\widehat {Kf} (\xi) = \widehat k (\xi) \widehat f(\xi), \qquad \xi\in E ,
\end{gather*}
where $\widehat k \in \mathcal S'(E',V)$ is the Fourier transform of the distribution $k$.

Let $p(x)$ be an $\End(V)$-valued polynomial function on $E$. Then, for $f\in \mathcal S(E,V)$ the formula
\begin{gather*}
f \mapsto \big(x\mapsto p(x) f(x)\big)
\end{gather*}
defines an operator on $\mathcal S(E,V)$
 denoted by $f\mapsto pf$ and referred to as the \emph{multiplication operator} by
$p$. A multiplication operator can be extended to $\mathcal S'(E,V)$.

We will have to deal with operators from $\mathcal S(E,V)$ into $\mathcal S'(E,V)$ which are obtained by composing a convolution operator (say $K$) followed by a multiplication operator by an $\End(V)$-valued polynomial function (say $p$) on $E$. Such an operator will be denoted by $p\,K$. By definition, its \emph{symbol} is given by
\begin{gather*}
\symb(p\,K) (x,\xi) = p(x)\circ \widehat k(\xi),\qquad x\in E,\qquad \xi\in E'
\end{gather*}
viewed as the polynomial function on $E$ with values in $\mathcal S'(E,\End(V))$
\begin{gather*}
x\mapsto p(x)\circ \widehat k(\xi) .
\end{gather*}
We let $\Op(E,V)$ be the family of finite linear combinations of such operators. In other words, an element of $\Op(E,V)$ can be written in a unique way as
\begin{gather*}
\sum_\alpha x^\alpha A_\alpha K_\alpha,
\end{gather*}
where $\alpha=(\alpha_1,\alpha_2,\dots, \alpha_n)$ denotes an $n$-multi-index, $A_\alpha$ in $\End(V)$ and $K_\alpha$ is a convolution operator by a tempered $\End(V)$-valued distribution on $E$, with the tacit convention that only a~finite number of terms in the sum are non-zero. Then the symbol of such an operator is given~by
\begin{gather*}
\sum_\alpha x^\alpha A_\alpha \widehat k_\alpha(\xi) .
\end{gather*}
A constant coefficient $\End(V)$-valued differential operator on $E$ is an example of a convolution operator with a tempered distribution, namely a combination of derivatives of the Dirac distribution at $0\in E$.
In particular $\Op(E,V)$ contains the $\End(V)$-valued Weyl algebra on $E$, denoted by $\mathcal W(E,V)$, consisting of the differential operators on $E$ with $\End(V)$-valued polynomial coef\-ficients. Notice that these operators map $\mathcal S(E,V)$ into $\mathcal S(E,V)$ and $\mathcal S'(E,V)$ into $\mathcal S'(E,V)$. Recall the usual definition of the symbol of a differential operator, namely the $\End(V)$-valued polynomial function $\sigma_D$ on $E\times E'$ is given
\begin{gather*}
D {\rm e}^{{\rm i}\langle x,\xi\rangle} = \sigma_D(x,\xi){\rm e}^{{\rm i}\langle x,\xi\rangle} .
\end{gather*}
Then an elementary computation shows that $\sigma_D$ coincides with $\symb(D)$ (see~\cite{c19} for the scalar case). More explicitly, let $D=\sum_\alpha p_\alpha(x) \partial_x^\alpha$ be in $\mathcal W(E,V)$, where $p_\alpha$ is an $\End(V)$-valued polynomial. Then its symbol is given by
\begin{gather*}
\symb(D)(x,\xi) = \sum_\alpha p_\alpha(x)({\rm i}\xi)^\alpha .
\end{gather*}
Although $\Op(E,V)$ is not an algebra of operators, some compositions are possible. Although a~more general result could be stated, we consider only two cases, which will be enough for the present paper.

\begin{prop} %\label{Psymb1}
 Let $D$ be an $\End(V)$-valued differential operator on $E$, and let $K$ be a~convolution operator with a tempered distribution. Then $D\circ K$ belongs to $\Op(E, V)$, and its symbol is given by
\begin{gather}\label{symb1}
 \symb(D\circ K)(x,\xi) = \symb(D) (x,\xi) \circ \symb(K)(\xi) .
\end{gather}
\end{prop}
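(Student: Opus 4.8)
The plan is to reduce to a monomial differential operator and then compute the composition $D\circ K$ directly, never attempting to reorder $D$ past $K$. First I would observe that both sides of \eqref{symb1} are linear in $D$ (the symbol map is linear, and $(D_1+D_2)\circ K = D_1\circ K + D_2\circ K$), so, writing $D$ in the normal-ordered form $D=\sum_\alpha p_\alpha(x)\,\partial_x^\alpha$ used to \emph{define} $\symb(D)$, it suffices to establish the claim for a single term $D=p\,\partial_x^\alpha$ with $p$ an $\End(V)$-valued polynomial and $\alpha$ a multi-index; moreover, writing $p(x)=\sum_\gamma x^\gamma A_\gamma$ with $A_\gamma\in\End(V)$, it is enough to treat $D=x^\gamma A\,\partial_x^\alpha$.

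The one analytic input I would isolate is the following: if $K$ is convolution by $k\in\mathcal S'(E,\End(V))$, then $\partial_x^\alpha\circ K$ is again a convolution operator, with kernel $\partial_x^\alpha k\in\mathcal S'(E,\End(V))$, since $\partial_x^\alpha(k*f)=(\partial_x^\alpha k)*f$ for $f\in\mathcal S(E,V)$. Combined with the elementary rule $\widehat{\partial_x^\alpha k}(\xi)=({\rm i}\xi)^\alpha\,\widehat k(\xi)$ — valid in $\mathcal S'$ for the Fourier normalization $\widehat f(\xi)=\int_E {\rm e}^{-{\rm i}\langle x,\xi\rangle}f(x)\,{\rm d}x$ used here — this gives $\symb(\partial_x^\alpha\circ K)(\xi)=({\rm i}\xi)^\alpha\,\widehat k(\xi)$.

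Then for $D=p\,\partial_x^\alpha$ we have $D\circ K=p\cdot(\partial_x^\alpha\circ K)$, a multiplication operator by an $\End(V)$-valued polynomial composed after a convolution operator with the tempered distribution $\partial_x^\alpha k$; by definition this lies in $\Op(E,V)$, and it maps $\mathcal S(E,V)$ into $\mathcal S'(E,V)$ because differentiation and multiplication by polynomials both preserve $\mathcal S'(E,V)$. By the definition of the symbol of an operator of the form $p\,K$, its symbol is the polynomial function $x\mapsto p(x)\circ\widehat{\partial_x^\alpha k}(\xi)=p(x)({\rm i}\xi)^\alpha\,\widehat k(\xi)$. On the other hand $\symb(D)(x,\xi)=p(x)({\rm i}\xi)^\alpha$ and $\symb(K)(\xi)=\widehat k(\xi)$, so $\symb(D)(x,\xi)\circ\symb(K)(\xi)=p(x)({\rm i}\xi)^\alpha\circ\widehat k(\xi)$, which is the same expression; summing over the terms of $D$ yields \eqref{symb1}.

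There is no genuine obstacle here — the content is bookkeeping — but two points deserve a word of care: (i) one must use the normal-ordered (coefficients-on-the-left) form of $D$ for the reduction to be compatible with the chosen definition of $\symb(D)$; and (ii) the identities $\partial_x^\alpha(k*f)=(\partial_x^\alpha k)*f$ and $\widehat{\partial_x^\alpha k}=({\rm i}\xi)^\alpha\widehat k$ are being applied to $k$ which is only a tempered distribution, so one invokes the standard distributional versions. Note that no regularity of $\widehat k$ at $\xi=0$ is needed, since $\widehat k$ is only multiplied by the polynomial $({\rm i}\xi)^\alpha$ and never inverted.
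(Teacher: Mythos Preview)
Your proposal is correct and follows essentially the same route as the paper: reduce by linearity to a monomial $p(x)\,\partial_x^\alpha$, observe that $\partial_x^\alpha\circ K$ is convolution by $\partial_x^\alpha k$ with Fourier transform $({\rm i}\xi)^\alpha\widehat k(\xi)$, and then read off the symbol from the definition of $\Op(E,V)$. The paper's proof is more terse (it treats only $D=\partial_x^\alpha$ and says ``the general formula follows easily''), while you spell out the linearity reduction and the care points about distributional identities, but there is no substantive difference in method.
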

\begin{proof} Let $\alpha$ be an $n$-multi-index and let $k$ be the kernel of the convolution operator $K$. Then
$\partial_x^\alpha \circ K$ is the convolution operator with kernel $\partial_x^\alpha k(x)$. Hence
\begin{gather*}
\symb(\partial_x^\alpha \circ K) (\xi) = ({\rm i}\xi)^\alpha \widehat k(\xi) ,
\end{gather*}
which coincides with formula~\eqref{symb1} for this particular case. The general formula follows easily.
\end{proof}

\begin{prop} %\label{Psymb2}
 Let $p$ be an scalar-valued polynomial on $E$, and let $L$ be in $\Op(E,V)$. Then $L\circ p$ belongs to $\Op(E,V)$ and its symbol is given by
\begin{gather}\label{symb2}
\symb(L\circ p)(x,\xi) = \sum_\alpha\frac{1}{\alpha !}\, \partial_x^\alpha p\,(x) \bigg( \frac{1}{\rm i} \partial_\xi\bigg)^\alpha \symb(L)(x,\xi) .
\end{gather}
\end{prop}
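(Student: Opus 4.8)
Proof proposal for the composition formula \eqref{symb2}.

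The plan is to reduce to the case of a single monomial multiplication operator and then proceed by an induction on the degree, using a first-order commutation identity as the basic building block. First I would observe that both sides of \eqref{symb2} are linear in $p$ and in $L$, so it suffices to treat $p(x)=x_i$ a single coordinate and $L=x^\alpha A K$ a single generating term of $\Op(E,V)$, with $A\in\End(V)$ and $K$ the convolution operator with kernel $k$. The key computation is then to determine $L\circ x_i$, i.e.\ the operator $f\mapsto L\big(x\mapsto x_i f(x)\big)$. Since the multiplication by $x^\alpha A$ and the scalar $x_i$ at the end commute with everything except $K$, the heart of the matter is to understand $K\circ x_i$, the operator sending $f$ to $\big(k * (x_i f)\big)$.

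For that one writes, using $y_i=(y_i-x_i)+x_i$ inside the convolution integral,
\begin{gather*}
\big(K\circ x_i\big)f(x)=\int_E k(x-y)\,y_i\,f(y)\,{\rm d}y
= x_i\,(Kf)(x)+\int_E (y_i-x_i)\,k(x-y)\,f(y)\,{\rm d}y.
\end{gather*}
The second integral is the convolution of $f$ with the kernel $z\mapsto -z_i\,k(z)$, hence is again a convolution operator, whose Fourier symbol is $-\widehat{z_i k}(\xi)=\frac1{\rm i}\partial_{\xi_i}\widehat k(\xi)$ by the elementary rule $\mathcal F(z_i k)={\rm i}\,\partial_{\xi_i}\widehat k$. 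This yields
\begin{gather*}
\symb\big(K\circ x_i\big)(x,\xi)= x_i\,\widehat k(\xi)+\frac1{\rm i}\partial_{\xi_i}\widehat k(\xi),
\end{gather*}
which is exactly \eqref{symb2} for $L=K$ and $p(x)=x_i$ (the only nonzero terms of the Taylor expansion being $\alpha=0$ and $\alpha=e_i$). Re-attaching the left multiplication by $x^\alpha A$ (which commutes past $x_i$) gives the formula for $L=x^\alpha A K$, since $\partial_\xi$ acts only on the $\widehat k$ factor while $\partial_x^\alpha(x_i)$ produces precisely the two terms $x_i\cdot x^\alpha$ and $\alpha_i\,x^{\alpha-e_i}$ — and one checks that the $\frac1{\alpha!}\partial_x^\alpha p$ bookkeeping reproduces both.

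The general case $p$ of arbitrary degree then follows by iterating: writing $p$ as a sum of monomials and each monomial as a product $x_{i_1}\cdots x_{i_d}$, one applies the first-order identity $d$ times, at each stage composing on the right by one more coordinate. The only thing to verify is that the resulting multiple sum reassembles into the single Taylor sum $\sum_\alpha \frac1{\alpha!}\,\partial_x^\alpha p\,\big(\tfrac1{\rm i}\partial_\xi\big)^\alpha$; this is the same combinatorial identity that underlies the Leibniz/Taylor formula and the analogous scalar statement in \cite{c19}, and is purely formal once one notes that $\big(\tfrac1{\rm i}\partial_\xi\big)^\alpha$ commute among themselves and that $\partial_x$ never hits the convolution factor. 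I expect the main obstacle to be purely organizational rather than analytic: keeping the induction hypothesis in a form where the $\frac1{\alpha!}$ coefficients and the mixed $x$--$\xi$ derivatives combine correctly at each step, and making sure that at no point is smoothness of $\widehat k$ in $\xi$ used — only that $\widehat k$ is a tempered distribution on which polynomial-coefficient differential operators in $\xi$ act, which is legitimate. All the steps preserve membership in $\Op(E,V)$ since multiplication by polynomials in $x$ and $\xi$-derivatives of a tempered distribution again produce tempered distributions, so $L\circ p\in\Op(E,V)$ as claimed.
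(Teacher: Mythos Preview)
Your argument is correct and follows essentially the same route as the paper: reduce to the case $L=K$ a pure convolution operator, compute the symbol of $K\circ p$ (the paper simply invokes the scalar computation from \cite[Proposition~1.2]{c19}, whereas you unpack it explicitly via the degree-one case $p=x_i$ and an induction on the degree), and then observe that left-multiplication by $x^\alpha A$ is harmless because $\partial_\xi$ acts only on the $\widehat k$ factor. One small slip: your parenthetical claiming that ``$\partial_x^\alpha(x_i)$ produces precisely the two terms $x_i\cdot x^\alpha$ and $\alpha_i\,x^{\alpha-e_i}$'' is garbled --- you appear to be conflating the fixed exponent $\alpha$ in $L=x^\alpha A K$ with the summation index in \eqref{symb2}; the two surviving terms in the sum are simply $x_i\cdot x^\alpha A\,\widehat k(\xi)$ and $x^\alpha A\,\tfrac1{\rm i}\partial_{\xi_i}\widehat k(\xi)$, and no factor $\alpha_i\,x^{\alpha-e_i}$ arises.
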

\begin{proof} Assume first that $L=K$ is a convolution operator by a tempered distribution $k$. Then the symbol of the composition $K\circ p$ can be computed exactly as in the scalar case (see~\cite[Proposition~1.2]{c19}) and the result coincides with~\eqref{symb2}, due to the fact that we assume that $p$ is a~scalar-valued polynomial. The general case follows easily.
\end{proof}

\subsection{The main formula}\label{S44}

We now apply the symbolic calculus developed in the previous section to the construction of the source operator in the non-compact picture. In particular, we come back to the context and notation of Section~\ref{S42}.

For $s,t\in \mathbb C$, consider the \emph{normalized Clifford--Riesz convolution operators}
\begin{gather}
\fsl{\mathcal R}_s \colon\quad \mathcal S(\mathbb R^n, \mathbb S)\to \mathcal S'(\mathbb R^n, \mathbb S), \qquad
\fsl{\mathcal R}_s f = \fsl{c}_s^{-1}\fsl{r}_s \star f ,\nonumber
\\
\fsl{\mathcal R}'_t \colon\quad \mathcal S(\mathbb R^n, \mathbb S')\to \mathcal S'(\mathbb R^n, \mathbb S' ), \qquad
\fsl{\mathcal R}'_t f = \fsl{c}_t^{-1}\fsl{r}'_t \star f .\label{defCR}
\end{gather}
The technical reason for this normalization is that
\begin{gather*}
\symb(\fsl{\mathcal R}_s)(\xi) = \fsl{r}_{-n-s}(\xi)\, ,\qquad
\symb(\fsl{\mathcal R}'_t)(\zeta) = \fsl{r}'_{-n-t}(\zeta) ,
\end{gather*}
see~\eqref{Fourierspin}.

The family $\fsl{\mathcal R}_s$ depends meromorphically on the parameter $s$. Poles and residues were studied in~\cite{co}. An example of the residues is the \emph{Dirac operators}, which in our context comes in two versions, given by
\begin{gather*}\label{defDirac}
\fsl{D} = \sum_{j=1} \rho(e_j) \frac{\partial}{\partial x_j}, \qquad \fsl{D}' = \sum_{j=1} \rho'(e_j) \frac{\partial}{\partial x_j} .
\end{gather*}
As a consequence, they satisfy an intertwining property under the action of the conformal group, and similar results are valid for their powers, see~\cite{co}.

Further, consider the operator
\begin{gather*}
\fsl{\mathcal{M}}_{s,t} \colon\quad \mathcal S(\mathbb R^n\times \mathbb R^n, \mathbb S\otimes \mathbb S')\to \mathcal S'(\mathbb R^n\times \mathbb R^n, \mathbb S\otimes \mathbb S')
,\qquad \fsl{\mathcal{M}}_{s,t} = \big(\fsl{\mathcal R}_s\otimes \fsl{\mathcal R}'_t\big)\circ \mathcal M .
\end{gather*}
which clearly belongs to $\Op\big(\mathbb R^n\times \mathbb R^n, \mathrm{End}(\mathbb{S}\, \otimes\, \mathbb{S}')\big)$ as considered in Section~\ref{S43}.

\begin{prop} \label{symb-F-s-t} We have,
\begin{gather*}
\symb\,(\fsl{\mathcal M}_{s,t})(x,y,\xi,\zeta)= f_{s,t}(x,y, \xi,\zeta) \circ \big( \fsl{r}_{-s-n-2}(\xi) \otimes \fsl{r}'_{-t-n-2}(\zeta)\big),
\end{gather*}
where
\begin{gather*}
f_{s,t}(x,y,\xi,\zeta) = \vert x-y\vert^2 \vert \xi\vert^2 \otimes \vert \zeta\vert^2
 +2{\rm i}(s+n+1) \sum_{j=1}^n (x_j-y_j)\xi_j\otimes \vert \zeta\vert^2
 \\ \hphantom{f_{s,t}(x,y,\xi,\zeta)=}
{} +2{\rm i}(t+n+1) \sum_{j=1}^n (y_j-x_j) \vert \xi\vert^2 \otimes \zeta_j
 +2{\rm i} \big(\rho(x-y)\rho(\xi)\otimes \vert \zeta\vert^2\big)
 \\ \hphantom{f_{s,t}(x,y,\xi,\zeta)=}
 {}+2{\rm i} \big(\vert \xi\vert^2 \otimes \rho'(y-x)\rho'(\zeta)\big)
 -(s+1)(s+n+1) \id\otimes \vert \zeta\vert^2
 \\ \hphantom{f_{s,t}(x,y,\xi,\zeta)=}
{} -(t+1)(t+n+1) \vert \xi\vert^2 \otimes \id
 +2(s+n+1)(t+n+1) \sum_{j=1}^n \xi_j\otimes \zeta_j
 \\ \hphantom{f_{s,t}(x,y,\xi,\zeta)=}
{} +2(s+n+1) \sum_{j=1}^n \xi_j\otimes \rho'(e_j)\rho'(\zeta)
+2(t+n+1)\sum_{j=1}^n \rho(e_j)\rho(\xi)\otimes \zeta_j
\\ \hphantom{f_{s,t}(x,y,\xi,\zeta)=}
{} +2\sum_{j=1}^n \rho(e_j)\rho(\xi)\otimes \rho'(e_j)\rho'(\zeta) .
\end{gather*}
\end{prop}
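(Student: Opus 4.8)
The plan is to compute $\symb(\fsl{\mathcal M}_{s,t})$ by peeling off the three operations that make up $\fsl{\mathcal M}_{s,t} = (\fsl{\mathcal R}_s \otimes \fsl{\mathcal R}'_t) \circ \mathcal M$, using the composition rules of the symbolic calculus from Section~\ref{S43}. First I would record the starting point: by definition of $\symb$ and the Fourier transform formula~\eqref{Fourierspin}, the convolution operator $\fsl{\mathcal R}_s \otimes \fsl{\mathcal R}'_t$ has symbol $\fsl{r}_{-n-s}(\xi) \otimes \fsl{r}'_{-n-t}(\zeta)$, which depends only on the cotangent variables. The multiplication operator $\mathcal M$ is multiplication by the scalar-valued polynomial $p(x,y) = |x-y|^2$ on $\mathbb R^n \times \mathbb R^n$. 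So I would apply Proposition~\ref{symb2} (composition of an element of $\Op$ with multiplication by a scalar polynomial on the right) with $L = \fsl{\mathcal R}_s \otimes \fsl{\mathcal R}'_t$ and $p(x,y)=|x-y|^2$. Since $p$ has degree $2$, the Taylor expansion in~\eqref{symb2} terminates: only the multi-indices $\alpha$ with $|\alpha| \le 2$ contribute, and $\partial^\alpha p$ is nonzero precisely for $|\alpha| = 0$ ($p$ itself), $|\alpha|=1$ (the linear terms $\pm 2(x_j - y_j)$, with signs depending on whether the derivative is in the $x$ or $y$ block), and $|\alpha| = 2$ (the constants $+2$ on the diagonal of each block, $-2$ on the cross terms).

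Thus the whole computation reduces to evaluating $(\tfrac1{\mathrm i}\partial_\xi)^\alpha$ and $(\tfrac1{\mathrm i}\partial_\zeta)^\alpha$ applied to $\fsl r_{-n-s}(\xi)$ and $\fsl r'_{-n-t}(\zeta)$ for $|\alpha|\le 2$, and then reassembling the terms with the correct combinatorial coefficients $1/\alpha!$ and the correct $\partial^\alpha p$ factors. The key technical input is formula~\eqref{partial1}, namely $\partial_j \fsl r_\sigma(x) = \big((\sigma-1)x_j - \rho(e_j x)\big)\fsl r_{\sigma-2}(x)$, which I would apply with $\sigma = -n-s$ (so $\sigma - 1 = -s-n-1$) to get the first $\xi$-derivatives, and then apply it again to get the second derivatives, being careful to also differentiate the polynomial prefactor produced by the first application. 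I would also use the $\Delta$-formula~\eqref{partial2} to handle the pure Laplacian pieces $\sum_j \partial_j^2$ that arise from the $|\alpha|=2$ diagonal terms. All the resulting distributions are scalar multiples of $\fsl r_{-s-n-2}(\xi)$ (respectively $\fsl r'_{-t-n-2}(\zeta)$) times a polynomial in $(x,y,\xi,\zeta)$ or an $\End(\mathbb S)$-valued (resp.\ $\End(\mathbb S')$-valued) expression like $\rho(\xi)$ or $\rho(e_j)\rho(\xi)$; collecting everything gives exactly the stated $f_{s,t}$ factored against $\fsl r_{-s-n-2}(\xi)\otimes \fsl r'_{-t-n-2}(\zeta)$.

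The main obstacle is purely bookkeeping: the tensor-product structure means each monomial in $\partial^\alpha p$ splits across the $\mathbb S$ and $\mathbb S'$ factors, and when a first derivative acts in the $x$-block it produces both a polynomial term $(x_j - y_j)\xi_j$ and a Clifford term $\rho(e_j(x-y))$, while a second derivative (e.g.\ $\partial_{x_i}\partial_{x_j}$ or $\partial_{x_i}\partial_{y_j}$) produces cross terms of all four types — polynomial$\times$polynomial, polynomial$\times$Clifford, Clifford$\times$polynomial, Clifford$\times$Clifford — which must be matched against the ten displayed lines of $f_{s,t}$. One has to be especially careful about the signs coming from $\partial_{y_j} p = -2(x_j-y_j)$ versus $\partial_{x_j} p = +2(x_j - y_j)$, and about the shift $-n-s \mapsto -s-n-1$ in the coefficient from~\eqref{partial1}, which is the origin of the $(s+n+1)$ factors. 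A clean way to organize this is to first compute the symbol of $\fsl{\mathcal R}_s \circ (\text{mult.\ by }|x-y|^2)$ as a function of $x,y,\xi$ alone (treating $y$ as a parameter), obtaining an expression involving $|x-y|^2|\xi|^2$, $(s+n+1)\sum_j(x_j-y_j)\xi_j$, $\rho(x-y)\rho(\xi)$, and the scalar $(s+1)(s+n+1)$; then do the analogous one-sided computation for the $\mathbb S'$ factor with a sign flip in the linear term; and finally combine the two via the $\Op$ tensor structure, which introduces precisely the mixed terms $\sum_j \xi_j\otimes\zeta_j$, $\sum_j \xi_j \otimes \rho'(e_j)\rho'(\zeta)$, etc., with their stated coefficients $2(s+n+1)(t+n+1)$, $2(s+n+1)$, and so on. No genuinely new idea is needed beyond Propositions~\ref{symb1}--\ref{symb2} and equations~\eqref{partial1}--\eqref{partial2}; it is a finite, if lengthy, symbol computation.
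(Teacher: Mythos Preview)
Your proposal is correct and follows essentially the same route as the paper: record $\symb(\fsl{\mathcal R}_s\otimes \fsl{\mathcal R}'_t)=\fsl r_{-s-n}(\xi)\otimes\fsl r'_{-t-n}(\zeta)$, apply the composition rule~\eqref{symb2} to the scalar polynomial $p(x,y)=|x-y|^2$ (only $|\alpha|\le 2$ survives), and then simplify each of the resulting six groups of terms using~\eqref{partial1} and~\eqref{partial2}. The paper's proof does exactly this, writing out the Taylor expansion on $\mathbb R^n\times\mathbb R^n$ as the $|\alpha|=0$ term, the two first-order terms in $\xi$ and $\zeta$, the two Laplacian terms $\Delta_\xi$, $\Delta_\zeta$, and the mixed $\sum_j\partial_{\xi_j}\partial_{\zeta_j}$ term, and then appeals to~\eqref{partial1}--\eqref{partial2} without further detail; your ``one side at a time'' organization is just a bookkeeping variant of the same computation.
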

\begin{proof}
First,
\begin{gather*}
\symb(\fsl{\mathcal R}_s\otimes \fsl{\mathcal R}'_t)(\xi,\zeta) = \fsl{r}_{-s-n} (\xi)\otimes \fsl{r}'_{-t-n}(\zeta) .
\end{gather*}
Following~\eqref{symb2} the composition formula for the symbols yields
\begin{gather*}
\symb\big(\fsl{\mathcal R}_s\otimes \fsl{\mathcal R}'_t\circ \vert x-y\vert^2\big)
 = \vert x-y\vert^2\fsl{r}_{-s-n}(\xi)\otimes \fsl{r}'_{-t-n}(\zeta)
\\ \hphantom{\symb\big(\fsl{\mathcal R}_s\otimes \fsl{\mathcal R}'_t\circ \vert x-y\vert^2\big) =}
{}+ 2\sum_{j=1}^n (x_j-y_j)\bigg(\frac{1}{\rm i}\frac{\partial}{\partial \xi_j}\bigg) \fsl{r}_{-s-n}(\xi)\otimes \fsl{r}'_{-t-n}(\zeta)
\\ \hphantom{\symb\big(\fsl{\mathcal R}_s\otimes \fsl{\mathcal R}'_t\circ \vert x-y\vert^2\big) =}
{}+2\sum_{j=1}^n (y_j-x_j) \fsl{r}_{-s-n}(\xi)\otimes \bigg(\frac{1}{\rm i}\frac{\partial}{\partial \zeta_j} \bigg)\fsl{r}'_{-t-n}(\zeta)
\\ \hphantom{\symb\big(\fsl{\mathcal R}_s\otimes \fsl{\mathcal R}'_t\circ \vert x-y\vert^2\big) =}
{}-\Delta \fsl{r}_{-s-n}(\xi)\otimes \fsl{r}'_{-t-n}(\zeta)
\\ \hphantom{\symb\big(\fsl{\mathcal R}_s\otimes \fsl{\mathcal R}'_t\circ \vert x-y\vert^2\big) =}
{}+2\sum_{j=1}^n \partial_j\fsl{r}_{-s-n}(\xi)\otimes \partial_j \fsl{r}'_{-t-n}(\zeta)- \fsl{r}_{-s-n}(\xi)\otimes \Delta \fsl{r}'_{-t-n}(\zeta).
\end{gather*}
Now use formulas~\eqref{partial1} and~\eqref{partial2} applied to $\fsl{r} $ and $ \fsl{r}'$ to get the result.
\end{proof}

Notice that $f_{s,t}$ is the symbol of a differential operator on $\mathbb R^n\times \mathbb R^n$. In the next proposition we give an explicit expression of this differential operator.
\begin{prop}
 For $s,t\in \mathbb C$, let
\begin{gather*}
 F_{s,t} := \vert x-y\vert^2 \Delta_x\otimes \Delta_y
- 2(s+n+1) \sum_{j=1}^n (x_j-y_j) \frac{\partial}{\partial x_j} \otimes \Delta_y
\\ \hphantom{ F_{s,t} :=}
{}-2(t+n+1) \sum_{j=1}^n (y_j-x_j) \Delta_x\otimes \frac{\partial}{\partial y_j}
-2\rho(x-y)\fsl{D}_x\otimes \Delta_y\ -\ 2 \Delta_x\otimes \rho'(y-x) \fsl{D}_y
\\ \hphantom{ F_{s,t} :=}
{}+(t+1)(t+n+1) \Delta_x\otimes \id\ +\ (s+1)(s+n+1)\id\otimes\Delta_y
\\ \hphantom{ F_{s,t} :=}
{}-2(s+n+1)(t+n+1) \sum_{j=1}^n \frac{\partial}{\partial x_j} \otimes \frac{\partial}{\partial y_j}
-2(s+n+1) \sum_{j=1}^n \frac{\partial}{\partial x_j}\otimes \rho'(e_j)\fsl{D}'_y
\\ \hphantom{ F_{s,t} :=}
{}-2(t+n+1) \sum_{j=1}^n \rho(e_j)\fsl{D}_x\otimes \frac{\partial}{\partial y_j}
- 2\bigg(\sum_{j=1}^n \rho(e_j) \fsl{D}_x\otimes \rho'(e_j)\fsl{D}'_y\bigg).
\end{gather*}
Then, the symbol of the differential operator $F_{s,t}$ is equal to $f_{s,t}$ given in Proposition~$\ref{symb-F-s-t}$.
\end{prop}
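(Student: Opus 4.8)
The plan is to prove the identity $\symb(F_{s,t}) = f_{s,t}$ by a direct, \emph{term-by-term} computation, using nothing beyond the definition of the symbol of a differential operator recalled in Section~\ref{S43}, namely $\symb(D)(x,\xi) = \sum_\alpha p_\alpha(x)({\rm i}\xi)^\alpha$ for $D = \sum_\alpha p_\alpha(x)\,\partial_x^\alpha$, together with its evident multiplicativity on bi-differential operators: if $D_1$ acts on the $x$-variable and $D_2$ on the $y$-variable, then $\symb(D_1\otimes D_2)(x,y,\xi,\zeta) = \symb(D_1)(x,\xi)\otimes \symb(D_2)(y,\zeta)$.

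First I would record the elementary symbol dictionary on a single copy of $\mathbb R^n$: one has $\symb(\partial/\partial x_j)(\xi) = {\rm i}\xi_j$, hence $\symb(\Delta_x)(\xi) = -\vert\xi\vert^2$ and likewise $\symb(\Delta_y)(\zeta) = -\vert\zeta\vert^2$; since $\fsl{D}_x = \sum_j \rho(e_j)\,\partial/\partial x_j$ has constant $\End(\mathbb S)$-valued coefficients standing to the left of the derivatives, $\symb(\fsl{D}_x)(\xi) = {\rm i}\,\rho(\xi)$, and similarly $\symb(\fsl{D}'_y)(\zeta) = {\rm i}\,\rho'(\zeta)$; consequently $\symb(\rho(e_j)\fsl{D}_x)(\xi) = {\rm i}\,\rho(e_j)\rho(\xi)$ and $\symb(\rho(x-y)\fsl{D}_x)(x,y,\xi) = {\rm i}\,\rho(x-y)\rho(\xi)$, the position-dependent factor $\rho(x-y)$ (respectively the scalar factors $x_j-y_j$ and $\vert x-y\vert^2$) being merely a left multiplier of the constant-coefficient part. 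It is worth stressing that this causes no ordering ambiguity: in every one of the eleven terms of $F_{s,t}$ the polynomial coefficient depends only on $(x,y)$ while the differentiations act only through the symbol variables, and in each monomial of $f_{s,t}$ the variables $(x,y)$ and the covariables $(\xi,\zeta)$ occur in separate tensor factors, so that $F_{s,t}$ is precisely the operator obtained from $f_{s,t}$ by the quantization $\xi_j\mapsto \frac{1}{\rm i}\,\partial/\partial x_j$, $\zeta_j\mapsto\frac{1}{\rm i}\,\partial/\partial y_j$ with coefficients placed on the left.

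It then remains to run through the ten lines defining $F_{s,t}$ in the order listed and compute the symbol of each using the dictionary above on each tensor factor. For example, the leading term $\vert x-y\vert^2\,\Delta_x\otimes\Delta_y$ yields $\vert x-y\vert^2(-\vert\xi\vert^2)\otimes(-\vert\zeta\vert^2) = \vert x-y\vert^2\,\vert\xi\vert^2\otimes\vert\zeta\vert^2$; the term $-2\rho(x-y)\fsl{D}_x\otimes\Delta_y$ yields $-2\,{\rm i}\rho(x-y)\rho(\xi)\otimes(-\vert\zeta\vert^2) = 2{\rm i}\,\rho(x-y)\rho(\xi)\otimes\vert\zeta\vert^2$; the term $(t+1)(t+n+1)\Delta_x\otimes\id$ yields $-(t+1)(t+n+1)\vert\xi\vert^2\otimes\id$; and the term $-2\big(\sum_j\rho(e_j)\fsl{D}_x\otimes\rho'(e_j)\fsl{D}'_y\big)$ yields $-2\sum_j({\rm i}\rho(e_j)\rho(\xi))\otimes({\rm i}\rho'(e_j)\rho'(\zeta)) = 2\sum_j\rho(e_j)\rho(\xi)\otimes\rho'(e_j)\rho'(\zeta)$; one proceeds identically for the remaining seven lines. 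In each case the sign relative to the naive substitution $\partial\mapsto{\rm i}\xi$ is flipped exactly once per second-order block (each Laplacian, or each product of two first-order factors, contributing ${\rm i}^2 = -1$), and this is exactly what produces the signs displayed in $f_{s,t}$. Summing the eleven contributions reproduces the expression of Proposition~\ref{symb-F-s-t} term for term.

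There is no genuine obstacle here: the statement is a bookkeeping verification. The only thing requiring care is tracking the signs and the factors of ${\rm i}$ through the two tensor factors, and checking that the expected correspondence between the eleven summands of $F_{s,t}$ and those of $f_{s,t}$ holds, with the two terms $(t+1)(t+n+1)\Delta_x\otimes\id$ and $(s+1)(s+n+1)\id\otimes\Delta_y$ matching the $-(t+1)(t+n+1)\vert\xi\vert^2\otimes\id$ and $-(s+1)(s+n+1)\id\otimes\vert\zeta\vert^2$ terms respectively.
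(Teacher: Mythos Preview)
Your proposal is correct and matches what the paper does: the paper states this proposition without proof, treating it as the direct term-by-term inversion of the symbol map applied to the expression for $f_{s,t}$ in Proposition~\ref{symb-F-s-t}, which is exactly the bookkeeping verification you carry out.
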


The symbol calculus yields also the following theorem, which is the main formula leading to the proof of Theorem~\ref{maintheorem}.
\begin{theor} The following identity holds for $s,t\in \mathbb C$
\begin{gather}\label{estrr}
\big(\fsl{\mathcal R}_s\otimes \fsl{\mathcal R}'_t\big)\circ \mathcal M =c(s,t)\ F_{s,t}\circ \big(\fsl{\mathcal R}_{s+2} \otimes \fsl{\mathcal R}'_{t+2}\big),
\end{gather}
where
\begin{gather*}
c(s,t) = \frac{1}{(s+1)(s+n+1)(t+1)(t+n+1)}.
\end{gather*}
\end{theor}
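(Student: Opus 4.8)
The plan is to prove the operator identity \eqref{estrr} by comparing symbols in the calculus of Section~\ref{S43}. Both sides of \eqref{estrr} are operators from $\mathcal S(\mathbb R^n\times\mathbb R^n,\mathbb S\otimes\mathbb S')$ to $\mathcal S'(\mathbb R^n\times\mathbb R^n,\mathbb S\otimes\mathbb S')$ which belong to $\Op\big(\mathbb R^n\times\mathbb R^n,\End(\mathbb S\otimes\mathbb S')\big)$; since the symbol map is injective on $\Op$, it suffices to check that the two sides have the same symbol. The left-hand side is exactly $\fsl{\mathcal M}_{s,t}$, whose symbol was computed in Proposition~\ref{symb-F-s-t} to be $f_{s,t}(x,y,\xi,\zeta)\circ\big(\fsl r_{-s-n-2}(\xi)\otimes\fsl r'_{-t-n-2}(\zeta)\big)$.

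For the right-hand side I would proceed in two steps. First, $\fsl{\mathcal R}_{s+2}\otimes\fsl{\mathcal R}'_{t+2}$ is a convolution operator whose symbol is, by the normalization \eqref{defCR} and \eqref{Fourierspin}, equal to $\fsl r_{-(s+2)-n}(\xi)\otimes\fsl r'_{-(t+2)-n}(\zeta)=\fsl r_{-s-n-2}(\xi)\otimes\fsl r'_{-t-n-2}(\zeta)$. Second, $F_{s,t}$ is a differential operator with $\End(\mathbb S\otimes\mathbb S')$-valued polynomial coefficients, so composing it on the left with this convolution operator falls under the composition rule \eqref{symb1}: the symbol of $F_{s,t}\circ(\fsl{\mathcal R}_{s+2}\otimes\fsl{\mathcal R}'_{t+2})$ equals $\symb(F_{s,t})(x,y,\xi,\zeta)\circ\big(\fsl r_{-s-n-2}(\xi)\otimes\fsl r'_{-t-n-2}(\zeta)\big)$. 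By the preceding Proposition the symbol of $F_{s,t}$ is precisely $f_{s,t}$. Hence the symbol of the right-hand side of \eqref{estrr} is $c(s,t)\,f_{s,t}(x,y,\xi,\zeta)\circ\big(\fsl r_{-s-n-2}(\xi)\otimes\fsl r'_{-t-n-2}(\zeta)\big)$.

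Comparing the two computations, the identity \eqref{estrr} holds if and only if the two symbols agree. But the symbol of the left-hand side carries an extra scalar factor relative to $f_{s,t}$: tracing through the proof of Proposition~\ref{symb-F-s-t}, the composition formula \eqref{symb2} applied to $\fsl{\mathcal R}_s\otimes\fsl{\mathcal R}'_t\circ\mathcal M$ produces, after substituting \eqref{partial1}--\eqref{partial2} applied to $\fsl r_{-s-n}$ and $\fsl r'_{-t-n}$, the differentiations that lower the Riesz index from $-s-n$ to $-s-n-2$ (and likewise for $\zeta$), and each such lowering contributes a scalar prefactor of the form $(s+1)(s+n+1)$ respectively $(t+1)(t+n+1)$ — exactly the reciprocal of $c(s,t)$. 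So I would make this bookkeeping explicit: redo the computation in Proposition~\ref{symb-F-s-t} keeping track of which monomials come with which powers of those linear factors, and observe that $\symb(\fsl{\mathcal M}_{s,t})=c(s,t)^{-1}f_{s,t}\circ(\fsl r_{-s-n-2}\otimes\fsl r'_{-t-n-2})$, which is $c(s,t)^{-1}$ times the symbol of $F_{s,t}\circ(\fsl{\mathcal R}_{s+2}\otimes\fsl{\mathcal R}'_{t+2})$. Equivalently, one normalizes $F_{s,t}$ so that its symbol is the \emph{reduced} polynomial $f_{s,t}$ (without the $(s+1)(s+n+1)(t+1)(t+n+1)$ factor), and the scalar $c(s,t)$ in \eqref{estrr} is precisely what compensates.

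The one genuinely delicate point, and the place where I expect the argument needs care rather than being purely formal, is the legitimacy of the symbol-calculus manipulations: $\fsl r_s$ and its partners are only \emph{distributions} (meromorphic in the parameter, with poles), $\mathcal M$ multiplies by an unbounded polynomial, and $\Op$ is not an algebra, so a priori the composition $(\fsl{\mathcal R}_s\otimes\fsl{\mathcal R}'_t)\circ\mathcal M$ has to be read in the precise order ``multiply by $|x-y|^2$, then convolve'' — which is exactly the order allowed by \eqref{symb2} since $|x-y|^2$ is scalar-valued. On the right-hand side the order is ``convolve first, then apply the differential operator $F_{s,t}$'', which is the case covered by \eqref{symb1}. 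So both compositions are among the two admissible cases singled out in Section~\ref{S43}, and the identity of symbols is an identity of well-defined elements of $\Op$; the equality of the operators then follows because the symbol map is a bijection onto its image. Away from the poles of $\fsl r_s,\fsl r'_t$ (a discrete set in $(s,t)$) everything is honest, and both sides of \eqref{estrr} are meromorphic in $(s,t)$, so the identity extends by analytic continuation to all $s,t\in\mathbb C$ where both sides are defined.
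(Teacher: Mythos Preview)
Your approach is exactly the paper's: compare symbols using Proposition~\ref{symb-F-s-t} and the composition rule~\eqref{symb1}. Your first two paragraphs are correct and, together with your fourth paragraph on well-definedness, already constitute a complete proof: both $\symb(\fsl{\mathcal M}_{s,t})$ and $\symb\big(F_{s,t}\circ(\fsl{\mathcal R}_{s+2}\otimes\fsl{\mathcal R}'_{t+2})\big)$ equal $f_{s,t}\circ(\fsl r_{-s-n-2}\otimes\fsl r'_{-t-n-2})$, hence the operators coincide.

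Your third paragraph, however, is wrong. You assert that $\symb(\fsl{\mathcal M}_{s,t})$ carries an extra global factor $c(s,t)^{-1}$ relative to $f_{s,t}\circ(\fsl r_{-s-n-2}\otimes\fsl r'_{-t-n-2})$, but this contradicts Proposition~\ref{symb-F-s-t}, which you invoked correctly in your first paragraph. The scalars $(s+1)(s+n+1)$ and $(t+1)(t+n+1)$ produced by~\eqref{partial2} are already absorbed into \emph{individual} terms of $f_{s,t}$ (for instance $-(s+1)(s+n+1)\,\id\otimes|\zeta|^2$); they are not a common overall factor, since the leading term $|x-y|^2|\xi|^2\otimes|\zeta|^2$ carries none of them. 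The ``redo the bookkeeping'' step you propose therefore cannot produce what you claim.

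What your first two paragraphs actually prove is~\eqref{estrr} with $c(s,t)=1$. The paper, by contrast, derives the nontrivial $c(s,t)$ from the ratio $\fsl c_{s+2}/\fsl c_s=-(s+1)(s+n+1)$. But with the \emph{normalized} operators of~\eqref{defCR} one has $\symb(\fsl{\mathcal R}_s)=\fsl r_{-s-n}$, so the $\fsl c$-constants are already accounted for and no further ratio should enter; the paper's displayed line with the $\fsl c_s^{-1}\fsl c_t^{-1}$ factors appears to read $\fsl{\mathcal R}_s$ as the unnormalized convolution $\fsl r_s\star(\cdot)$. Under that reading the stated $c(s,t)$ is correct; under the normalized reading it should be $1$. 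Either way the discrepancy is harmless downstream (only the proportionality is used in Section~4.5), and the right move for you is to delete the third paragraph rather than invent a factorization that does not exist.
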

\begin{proof} The identity for the symbols obtained in Proposition~\ref{symb-F-s-t} is translated as
\begin{gather*}
\fsl{c}_s^{-1} \fsl{c}_t^{-1}\big(\fsl{\mathcal R}_s\otimes \fsl{\mathcal R}_t'\big)\circ \mathcal M = \fsl{c}_{s+2}^{-1}\,\fsl{c}_{t+2}^{-1}\ F_{s,t}\circ\big(\fsl{\mathcal R}_{s+2}\otimes \fsl{\mathcal R}_{t+2}'\big).
\end{gather*}
An elementary computation gives
\begin{gather*}
\frac{\fsl{c}_{s+2}}{\fsl{c}_s} = -(s+1)(s+n+1)
\end{gather*}
and the theorem follows.
\end{proof}

\subsection{The proof of the main theorem}
We now study the behaviour of the operators involved in the previous construction under the action of the conformal group $\mathbf G$.

The main observation is that up to a shift in the parameters, and up to a constant multiple, the operators $\fsl{\mathcal R}_s$ and $\fsl{\mathcal R}'_t$ are essentially the Knapp--Stein operators considered in Section~\ref{S33}.

For $\lambda, \mu\in \mathbb C$ generic, compare~\eqref{defKS} and~\eqref{defCR} to get
\begin{gather*}
I_\lambda = \fsl{c}_{2\lambda- 2n} \fsl{\mathcal R}_{2\lambda- 2n}, \qquad I'_\mu = \fsl{c}_{2\mu-2n} \fsl{\mathcal R}'_{2\mu-2n}. \end{gather*}
 Change the normalization of the Knapp--Stein operator, that is, redefine the
 Knapp--Stein ope\-ra\-tors by setting
 \begin{gather*}
 \widetilde I_\lambda = \fsl{\mathcal R}_{2\lambda-2n},\qquad \widetilde I'_\mu = \fsl{\mathcal R}'_{2\mu-2n}.
 \end{gather*}
 Moreover, set
 \begin{gather*}
 s= -2\lambda-2,\qquad t=-2\mu-2
 \end{gather*}
 and
 \begin{gather*}
 E_{\lambda,\mu} = F_{s,t} =F_{-2\lambda-2, -2\mu-2}.
 \end{gather*}
Notice that
\begin{gather*}
s+n+1 =-2\lambda+n-1, \qquad
s+1 = -2\lambda-1,
\end{gather*}
so that
\begin{gather}
E_{\lambda,\mu} = \vert x-y\vert^2 \Delta_x\otimes \Delta_y
+2(2\lambda-n+1) \sum_{j=1}^n (x_j-y_j) \frac{\partial}{\partial x_j} \otimes \Delta_y\nonumber
\\ \hphantom{E_{\lambda,\mu} =}
{}+ 2(2\mu-n+1) \sum_{j=1}^n (y_j-x_j) \Delta_x\otimes \frac{\partial}{\partial y_j}
-2\rho(x-y)\fsl{D}_x\otimes \Delta_y - 2 \Delta_x\otimes \rho'(x-y) \fsl{D}_y\nonumber
\\ \hphantom{E_{\lambda,\mu} =}
{}+(2\mu-n+1)(2\mu+1) \Delta_x\otimes \id + (2\lambda-n+1) (2\lambda+1)\id\otimes\Delta_y\nonumber
\\ \hphantom{E_{\lambda,\mu} =}
{} -2(2\lambda-n+1)(2\mu-n+1) \sum_{j=1}^n \frac{\partial}{\partial x_j} \otimes \frac{\partial}{\partial y_j}
+2(2\lambda-n+1) \sum_{j=1}^n \frac{\partial}{\partial x_j}\otimes \rho'(e_j)\fsl{D}'_y\nonumber
\\ \hphantom{E_{\lambda,\mu} =}
{}+2(2\mu-n+1)\sum_{j=1}^n \rho(e_j)\fsl{D}_x\otimes \frac{\partial}{\partial y_j}
- 2\bigg(\sum_{j=1}^n \rho(e_j)\fsl{D}_x\otimes \rho'(e_j)\fsl{D}'_y\bigg).\label{Elambdamu}
\end{gather}
 Now~\eqref{estrr} can be rewritten as
\begin{gather*}
\big(\widetilde I_{n-\lambda-1} \otimes \widetilde I'_{n-\mu-1}\big) \circ \mathcal{M} = d(\lambda, \mu)\, E_{\lambda, \mu}\circ \big(\widetilde I_{n-\lambda}\otimes \widetilde I'_{n-\mu}\big),
\end{gather*}
where
\begin{gather*}
d(\lambda, \mu) = \frac{1}{(2\lambda-n+1)(2\lambda+1)(2\mu-n+1)(2\mu+1)}.
\end{gather*}
\begin{theor}\label{covE1th}
 The differential operator $E_{\lambda,\mu}$ satisfies, for any $g\in \mathbf G$
\begin{gather*}\label{covE1}
 E_{\lambda,\mu} \circ \big(\pi_\lambda(g) \otimes \pi'_\mu(g)\big)=\big( \pi_{\lambda+1}(g)\otimes \pi'_{\mu+1}(g)\big)\circ E_{\lambda,\mu}.
\end{gather*}
The equality holds when applied to functions $f \in C^\infty(\mathbb R^n \times \mathbb R^n, \mathbb S \otimes \mathbb S')$ with compact support and such that the action of $g$ is defined on the support of $f$.
\end{theor}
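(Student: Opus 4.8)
The plan is to deduce the covariance of $E_{\lambda,\mu}$ formally from the factorization identity obtained just above, $\big(\widetilde I_{n-\lambda-1}\otimes \widetilde I'_{n-\mu-1}\big)\circ \mathcal M = d(\lambda,\mu)\, E_{\lambda,\mu}\circ \big(\widetilde I_{n-\lambda}\otimes \widetilde I'_{n-\mu}\big)$, together with the covariance of its three ingredients: the renormalized Knapp--Stein operators $\widetilde I_\lambda\otimes \widetilde I'_\mu$, which are nonzero scalar multiples of $I_\lambda\otimes I'_\mu$ and therefore intertwine $\pi_\lambda\otimes\pi'_\mu$ with $\pi_{n-\lambda}\otimes\pi'_{n-\mu}$, and the multiplication operator $\mathcal M$, which by Proposition~\ref{covM} intertwines $\pi_\lambda\otimes\pi'_\mu$ with $\pi_{\lambda-1}\otimes\pi'_{\mu-1}$.

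Fix $g\in\mathbf G$ and first treat generic $\lambda,\mu$, so that $\widetilde I_{n-\lambda}\otimes\widetilde I'_{n-\mu}$ is, up to a nonzero scalar, inverse to $\widetilde I_\lambda\otimes\widetilde I'_\mu$; then it suffices to verify the asserted identity after precomposing with $\widetilde I_{n-\lambda}\otimes\widetilde I'_{n-\mu}$. Starting from $E_{\lambda,\mu}\circ\big(\pi_\lambda(g)\otimes\pi'_\mu(g)\big)\circ\big(\widetilde I_{n-\lambda}\otimes\widetilde I'_{n-\mu}\big)$, I would successively: move $\pi_\lambda(g)\otimes\pi'_\mu(g)$ to the right through $\widetilde I_{n-\lambda}\otimes\widetilde I'_{n-\mu}$, where it becomes $\pi_{n-\lambda}(g)\otimes\pi'_{n-\mu}(g)$; use the factorization identity to rewrite $E_{\lambda,\mu}\circ\big(\widetilde I_{n-\lambda}\otimes\widetilde I'_{n-\mu}\big)$ as $d(\lambda,\mu)^{-1}\big(\widetilde I_{n-\lambda-1}\otimes\widetilde I'_{n-\mu-1}\big)\circ\mathcal M$; move $\pi_{n-\lambda}(g)\otimes\pi'_{n-\mu}(g)$ to the right through $\mathcal M$ by Proposition~\ref{covM}, lowering the parameters to $n-\lambda-1,\,n-\mu-1$; move it further to the right through $\widetilde I_{n-\lambda-1}\otimes\widetilde I'_{n-\mu-1}$, where it becomes $\pi_{\lambda+1}(g)\otimes\pi'_{\mu+1}(g)$; and finally run the factorization identity backwards. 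The factors $d(\lambda,\mu)^{\pm1}$ cancel, leaving $\big(\pi_{\lambda+1}(g)\otimes\pi'_{\mu+1}(g)\big)\circ E_{\lambda,\mu}\circ\big(\widetilde I_{n-\lambda}\otimes\widetilde I'_{n-\mu}\big)$, and cancelling the common right factor proves the claim for generic $\lambda,\mu$. To reach all $\lambda,\mu\in\mathbb C$, observe that for fixed $f$ and $g$ both sides of the desired identity depend holomorphically on $(\lambda,\mu)$: the coefficients of $E_{\lambda,\mu}$ are polynomials in $\lambda,\mu$ by~\eqref{Elambdamu}, and $\pi_\lambda(g)$ depends on $\lambda$ only through the entire factor ${\rm e}^{-2\lambda\ln a(g^{-1},\,x)}$. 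Hence the identity propagates by analytic continuation.

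The algebraic chain above is routine; the actual work is to make it rigorous, and this is where I expect the main obstacle. The operators $\widetilde I_s$ and $\mathcal M$ only carry Schwartz, or smooth compactly supported, functions into tempered distributions, the translates $\pi_\lambda(g)$ involve the merely rational (not everywhere defined) action of $\mathbf G$ on $\mathbb R^n$, and the Knapp--Stein family is only meromorphic in its parameter; so one must check that each composition appearing above is legitimate on the space at hand. This is exactly why the statement is restricted to $f\in C^\infty(\mathbb R^n\times\mathbb R^n,\mathbb S\otimes\mathbb S')$ with compact support on which $g$ acts: that restriction keeps every intermediate object well defined and every step meaningful. The final delicate point is the cancellation, namely passing from $A\circ\big(\widetilde I_{n-\lambda}\otimes\widetilde I'_{n-\mu}\big)=B\circ\big(\widetilde I_{n-\lambda}\otimes\widetilde I'_{n-\mu}\big)$ to $A=B$ for the two differential operators in play; for generic parameters this rests on the fact that $\widetilde I_{n-\lambda}\otimes\widetilde I'_{n-\mu}$ has, up to a nonzero scalar, an inverse given by $\widetilde I_\lambda\otimes\widetilde I'_\mu$, together with the continuity of differential operators on $C^\infty$. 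These analytic subtleties, rather than any new algebraic idea, carry the weight of the proof.
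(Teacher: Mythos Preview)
Your algebraic chain is exactly the one the paper uses, and your identification of the analytic obstacles is accurate; but the paper does not push through those obstacles at the group level, and the fixes you suggest do not actually work. Restricting to compactly supported $f$ on which $g$ acts does \emph{not} keep the intermediate objects under control: the first application of $\widetilde I_{n-\lambda}\otimes\widetilde I'_{n-\mu}$ already produces a function (or distribution) with global support, and then $\pi_\lambda(g)$, which involves the merely rational action $x\mapsto g^{-1}(x)$, is no longer well defined on it. Likewise, your cancellation step appeals to $\widetilde I_\lambda\otimes\widetilde I'_\mu$ being an inverse of $\widetilde I_{n-\lambda}\otimes\widetilde I'_{n-\mu}$, but each of these maps $\mathcal S$ only into $\mathcal S'$, so the composition is not a priori defined in the non-compact picture.

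The paper circumvents both problems by passing to the infinitesimal level. It proves instead that for every $X\in\mathfrak g$,
\[
E_{\lambda,\mu}\circ\big(d\pi_\lambda(X)\otimes\id+\id\otimes d\pi'_\mu(X)\big)
=\big(d\pi_{\lambda+1}(X)\otimes\id+\id\otimes d\pi'_{\mu+1}(X)\big)\circ E_{\lambda,\mu},
\]
which is equivalent to the group statement since $\mathbf G$ is connected. The point is that each $d\pi_\lambda(X)$ is a differential operator with $\End(\mathbb S)$-valued polynomial coefficients, hence preserves $\mathcal S$ and extends to $\mathcal S'$; so every composition in your chain becomes legitimate once $\pi_\lambda(g)$ is replaced by $d\pi_\lambda(X)$. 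The cancellation is then handled not by inverting the Knapp--Stein operator but by a clean Fourier-side lemma: if $D$ is a differential operator with polynomial coefficients and $K$ is convolution by a tempered distribution whose Fourier transform is $\mathrm{GL}(V)$-valued on a dense open set, then $D\circ K=0$ forces $D=0$. Applying this with $D=A_{\lambda,\mu}(X)$ (the difference of the two sides) and $K=\widetilde I_{n-\lambda}\otimes\widetilde I'_{n-\mu}$ finishes the proof for generic $(\lambda,\mu)$, and polynomial dependence on $(\lambda,\mu)$ extends it to all parameters.
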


\begin{proof}
As $\mathbf G$ is connected, Theorem~\ref{covE1th} is equivalent to its infinitesimal version, which we now formulate.

\begin{theor}\label{thcovE2}
 For any $X\in \mathfrak g$,
\begin{gather}\label{covE2}
E_{\lambda, \mu} \circ \big({{\rm d}}\pi_\lambda(X)\otimes \id+\id\otimes\, {{\rm d}}\pi'_\mu(X)\big)= \big({{\rm d}}\pi_{\lambda+1}(X)\otimes \id+\id\otimes\, {{\rm d}}\pi'_{\mu+1}(X)\big)\circ E_{\lambda, \mu}.
\end{gather}
\end{theor}
A well-known and easy-to-prove result is that ${{\rm d}}\pi_\lambda(X)$ is a differential operator with $\End(\mathbb S)$-valued polynomial coefficients, hence preserves the space $\mathcal S(\mathbb R^n, \mathbb S)$, so that both sides of~\eqref{covE2} are well defined and are differential operators on $\mathbb R^n\times \mathbb R^n$ with $\End(\mathbb S \otimes \mathbb S')$-valued polynomial coefficients.

 In order to prove Theorem~\ref{thcovE2}, let for $X\in \mathfrak g$
 \begin{gather*}\label{covE3}
 A_{\lambda,\mu}(X) = E_{\lambda, \mu} \circ \big({{\rm d}}\pi_\lambda(X)\otimes \id\!+\id\otimes\, {{\rm d}}\pi'_\mu(X)\big)\!- \big({{\rm d}}\pi_{\lambda+1}(X)\otimes \id\!+\id\otimes\, {{\rm d}}\pi'_{\mu+1}(X)\big)\circ E_{\lambda, \mu}.
 \end{gather*}
 We want to prove that $A_{\lambda, \mu}(X)=0$ for any $X\in \mathfrak g$, and in order to do it, we first prove the following weaker statement.
 \begin{lem} For any $X\in \mathfrak g$
 \begin{gather}\label{covE3bis}
 A_{\lambda, \mu} (X)\circ \big(\widetilde I_{n-\lambda} \otimes \widetilde I'_{n-\mu}\big)=0.
 \end{gather}
 \end{lem}
\begin{proof}
It is sufficient to prove the results for $(\lambda, \mu)$ generic, so that we the may assume that $\lambda$, $\lambda+1$, $n-\lambda$, $n-\lambda-1$ are not poles of $\widetilde I_\lambda$ and same conditions on $\mu$. Also assume that $(\lambda, \mu)$ is not a pole of the rational function $d(\lambda, \mu)$. Thus for any $X\in \mathfrak g$
\begin{gather*}
\big({{\rm d}}\pi_{\lambda+1}(X)\otimes \id+\id\otimes\, {{\rm d}}\pi'_{\mu+1}(X)\big)\circ d(\lambda,\mu) E_{\lambda, \mu}\circ \big(\widetilde I_{n-\lambda}\otimes \widetilde I'_{n-\mu}\big)
\\ \qquad
{}= \big({{\rm d}}\pi_{\lambda+1}(X)\otimes \id+\id\otimes\, {{\rm d}}\pi'_{\mu+1}(X)\big)\circ \big(\widetilde I_{n-\lambda-1} \otimes \widetilde I'_{n-\mu-1} \big)\circ \mathcal{M}
\\ \qquad
{}= \big(\widetilde I_{n-\lambda-1} \otimes \widetilde I'_{n-\mu-1} \big)\circ \big({{\rm d}}\pi_{n-\lambda-1}(X)\otimes \id+\id\otimes\, {{\rm d}}\pi'_{n-\mu-1}(X)\big)\circ \mathcal{M}
\\ \qquad
{}=\big(\widetilde I_{n-\lambda-1} \otimes \widetilde I'_{n-\mu-1} \big)\circ \mathcal{M}\circ\big({{\rm d}}\pi_{n-\lambda}(X)\otimes \id+\id\otimes\,{{\rm d}}\pi'_{n-\mu}(X)\big)
\\ \qquad
{}= d(\lambda,\mu) E_{\lambda,\mu} \circ \big(\widetilde I_{n-\lambda} \otimes \widetilde I'_{n-\mu} \big)\circ \big({{\rm d}}\pi_{n-\lambda}(X)\otimes \id+\id\otimes\, {{\rm d}}\pi'_{n-\mu}(X)\big) \big)
\\ \qquad
{}= d(\lambda,\mu) E_{\lambda, \mu} \circ \big({{\rm d}}\pi_{\lambda}(X)\otimes \id+\id\otimes\, {{\rm d}}\pi'_{\mu}(X)\big)\circ \big(\widetilde I_{n-\lambda} \otimes \widetilde I'_{n-\mu} \big) ,
\end{gather*}
and~\eqref{covE3bis} follows.
\end{proof}

The proof of Theorem~\ref{thcovE2} is achieved through the following lemma, valid in a more general context.

\begin{lem} Let $V$ be a finite-dimensional vector space. Let $D$ be a differential operator acting on $C^\infty(\mathbb R^p, V)$ with $\End(V)$-valued polynomial coefficients. Let $K$ be a convolution operator on~$\mathbb R^p$ by an $\End(V)$-valued tempered distribution $k$. Assume that its Fourier transform $\widehat k$ coincides on a dense open subset $\mathcal O\subset \mathbb R^p$ with an $\End(V)$-valued smooth function and satisfies
\begin{gather*}
\text{for any}\quad \xi\in \mathcal O \qquad \widehat{k}(\xi) \in {\rm GL}(V).
\end{gather*}
Assume further that $D\circ K=0$. Then $D=0$.
\end{lem}
\begin{proof} Under the Fourier transform, the operator $K$ corresponds to the multiplication operator by $\widehat k$, and the operator $D$ corresponds to a differential operator \begin{gather*}\widehat D= \sum_I a_I(\xi) \partial_\xi^I\end{gather*}
 on $\mathbb R^p$ with $\End(V)$-valued polynomial coefficients. The assumption $D\circ K=0$ implies $\widehat D \circ \widehat K = 0$, or in other words $\widehat D\big(\widehat k \psi\big) = 0$ for any function $\psi\in \mathcal S(\mathbb R^p, V)$.

Let $\xi_0\in \mathcal O$, $v_0\in V$ and $I_0$ a $p$-multi-index. There exists a smooth $V$-valued function $\varphi_0$ with compact support included in $\mathcal O$ and such that in a neighbourhood of $\xi_0$
\begin{gather*}
\varphi_0(\xi) =\frac{1}{I_0!} (\xi-\xi_0)^{I_0} v_0,
\end{gather*}
so that $\partial^{I_0}\varphi_0(\xi_0) = v_0$. Now let $\psi_0$ be defined on $\mathcal O$ by
\begin{gather*}
\psi_0(\xi) = \widehat k (\xi) ^{-1} \varphi_0(\xi)
\end{gather*}
and equal to $0$ outside of $\mathcal O$. The function $\psi$ is a smooth function with compact support on $\mathbb R^p$ and
\begin{gather*}
0=\widehat D \big(\widehat k \psi_0\big) (\xi_0) = \widehat D(\varphi_0)(\xi_0) = a_{I_0}(\xi_0) v_0.
\end{gather*}
This being valid for any $v_0\in V$, it follows that $a_{I_0}(\xi_0) = 0$. As $\xi_0$ was arbitrary in $\mathcal O$ and $a_{I_0}$ is a polynomial, this implies $a_{I_0}\equiv 0$ and finally, as $I_0$ was arbitrary $\widehat D=0$. This finishes the proof of the lemma.
\end{proof}

For generic $\lambda$, $\mu$, the operator $K=\widetilde I_{n-\lambda}\otimes \widetilde I'_{n-\mu}$ satisfies the conditions of the lemma. Hence~\eqref{covE2} holds true and Theorem~\ref{covE1th} follows.
\end{proof}

\section[The symmetry breaking differential operators for the tensor product of two spinorial representations]
{The symmetry breaking differential operators\\ for the tensor product of two spinorial representations}

\subsection[The projections tilde Psi(k)]{The projections $\boldsymbol{\widetilde \Psi^{(k)}}$}

Recall the study of the tensor product $\mathbb S\otimes \mathbb S'$ under the action of $\mathbf{M} = \Spin(n)$ and in particular
for $k$, $1\leq k\leq n$, there is an $\mathbf M$-intertwining map $\Psi^{(k)} \colon \mathbb S\otimes \mathbb{S}' \to \Lambda^*_k(\mathbb{R}^n)\otimes \mathbb C$. Recall that $\tau^*_k$ is the representation of $\bf M$ on $\Lambda^*_k(\mathbb{R}^n)\otimes\mathbb{C}$.

For $\nu\in \mathbb C$ and $k$, $0\leq k\leq n$, let
 \begin{gather*}
 \pi_{k;\nu} =\Ind_\mathbf P^\mathbf G{ \tau^*_k}\otimes \chi_\nu \otimes 1\end{gather*}
 which is realized on (a subspace of) the space $C^\infty(\mathbb R^n, \Lambda^*_k(\mathbb{R}^n)\otimes \mathbb C)$ in the non-compact picture. Further let
\begin{gather*}
\widetilde \Psi^{(k)} \colon\quad C^\infty(\mathbb R^n\times \mathbb R^n,
\mathbb S\otimes \mathbb{S}' )\to C^\infty( \mathbb R^n,
\Lambda^*_k\big(\mathbb R^n)\otimes\mathbb C\big),\qquad
F\mapsto \big(\Psi^{(k)}F(x,y)\big)_{\vert x=y}.
\end{gather*}
Let $\lambda, \mu \in \mathbb C$. Form the spinorial representations
\begin{gather*}
\pi_\lambda= \Ind_\mathbf P^\mathbf G \rho\otimes \chi_\lambda \otimes 1, \qquad \pi'_\mu= \Ind_\mathbf P^\mathbf G \rho'\otimes \chi_\mu\otimes 1
\end{gather*}
and the tensor product $\pi_\lambda \otimes \pi'_\mu$. The following result is a consequence of the functoriality of the induction process.

\begin{prop} The map $\widetilde \Psi^{(k)}$ satisfies
\begin{gather*}
\widetilde\Psi^{(k)}\circ (\pi_\lambda(g) \otimes \pi'_\mu(g)) = \pi_{k;\lambda+\mu}(g) \circ \widetilde\Psi^{(k)}.
\end{gather*}
\end{prop}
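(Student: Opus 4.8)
The statement asserts that $\widetilde\Psi^{(k)}$ intertwines the inner tensor product $\pi_\lambda\otimes\pi'_\mu$ with $\pi_{k;\lambda+\mu}$, and the natural plan is to factor $\widetilde\Psi^{(k)}$ through two visibly $\mathbf{G}$-equivariant operations and appeal to functoriality of induction. First I would record that the diagonal embedding $\mathbf{G}/\mathbf{P}\hookrightarrow\mathbf{G}/\mathbf{P}\times\mathbf{G}/\mathbf{P}$, which in the non-compact picture is $x\mapsto(x,x)$, is $\mathbf{G}$-equivariant for the diagonal action, and that the pullback along it of the external tensor product bundle attached to the $\mathbf{P}$-modules $\rho\otimes\chi_\lambda\otimes 1$ and $\rho'\otimes\chi_\mu\otimes 1$ is the bundle over $\mathbf{G}/\mathbf{P}$ attached to $(\rho\otimes\chi_\lambda\otimes 1)\otimes(\rho'\otimes\chi_\mu\otimes 1)=(\rho\otimes\rho')\otimes\chi_{\lambda+\mu}\otimes 1$. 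Consequently $F(x,y)\mapsto F(x,x)$ is a $\mathbf{G}$-map from $\pi_\lambda\otimes\pi'_\mu$ to $\Ind_{\mathbf{P}}^{\mathbf{G}}\big((\rho\otimes\rho')\otimes\chi_{\lambda+\mu}\otimes 1\big)$.

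Next I would induce the fibre map $\Psi^{(k)}$. By the proposition above, $\Psi$ intertwines $\rho\otimes\rho'$ with $\tau^*$; since the splitting $\Lambda^*(\mathbb{R}^n)\otimes\mathbb{C}=\bigoplus_k\Lambda^*_k(\mathbb{R}^n)\otimes\mathbb{C}$ is $\Spin(n)$-stable, $\proj_k$ is $\tau^*$-equivariant, so $\Psi^{(k)}=\proj_k\circ\Psi$ intertwines $\rho\otimes\rho'$ with $\tau^*_k$. Being linear, $\Psi^{(k)}$ also commutes with the scalar $A$-action through $\chi_{\lambda+\mu}$ and with the trivial $N$-action, hence is a morphism of $\mathbf{P}$-modules $(\rho\otimes\rho')\otimes\chi_{\lambda+\mu}\otimes 1\to\tau^*_k\otimes\chi_{\lambda+\mu}\otimes 1$. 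Functoriality of $\Ind_{\mathbf{P}}^{\mathbf{G}}$ then gives a $\mathbf{G}$-map $\Ind_{\mathbf{P}}^{\mathbf{G}}\big((\rho\otimes\rho')\otimes\chi_{\lambda+\mu}\otimes 1\big)\to\pi_{k;\lambda+\mu}$ which in the non-compact picture is $H\mapsto\big(x\mapsto\Psi^{(k)}(H(x))\big)$. Composing the two $\mathbf{G}$-maps yields $F\mapsto\Psi^{(k)}(F(x,x))=\widetilde\Psi^{(k)}F$, which proves the proposition.

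If a hands-on verification is preferred, one computes directly in the non-compact picture: $(\pi_\lambda(g)\otimes\pi'_\mu(g))F(x,y)=\chi_\lambda(a(g^{-1}\overline n_x))^{-1}\chi_\mu(a(g^{-1}\overline n_y))^{-1}\big(\rho(m(g^{-1}\overline n_x))^{-1}\otimes\rho'(m(g^{-1}\overline n_y))^{-1}\big)F(g^{-1}(x),g^{-1}(y))$; putting $x=y$ makes the group elements $a(g^{-1}\overline n_x)\in A$ and $m(g^{-1}\overline n_x)\in\mathbf{M}$ common to both factors, so the two characters merge into $\chi_{\lambda+\mu}$, and, using that $\Psi^{(k)}$ intertwines $\rho\otimes\rho'$ with $\tau^*_k$, the factor of $m(g^{-1}\overline n_x)$ passes through $\Psi^{(k)}$ as $\tau^*_k(m(g^{-1}\overline n_x))^{-1}$; the result is exactly $\pi_{k;\lambda+\mu}(g)\big(\widetilde\Psi^{(k)}F\big)(x)$. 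The argument is essentially formal; the only point that needs a little care — and the closest thing here to an obstacle — is identifying the bundle pulled back along the diagonal, i.e., checking that $\chi_\lambda$ and $\chi_\mu$ genuinely combine into $\chi_{\lambda+\mu}$ (with no additional shift) once the constraint $x=y$ is in force, since this is what pins down the target parameter.
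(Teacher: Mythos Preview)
Your argument is correct and is precisely the approach the paper indicates: the paper simply states that the result ``is a consequence of the functoriality of the induction process'' without further detail, and your two-step factorisation (restriction to the diagonal followed by the induced map of the $\mathbf{P}$-morphism $\Psi^{(k)}$) spells this out exactly. The additional hands-on verification in the non-compact picture is a nice complement, but it goes beyond what the paper provides.
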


\subsection{Definition of the SBDO}
For $m\in \mathbb N$, define the operator $E_{\lambda, \mu}^{(m)} \colon C^\infty(\mathbb R^n \times \mathbb R^n, \mathbb S\otimes \mathbb S')\to C^\infty(\mathbb R^n \times \mathbb R^n, \mathbb S\otimes \mathbb S')$ by
\begin{gather*}
E_{\lambda, \mu}^{(m)}=E_{\lambda+m-1, \mu+m-1}\circ \dots \circ E_{\lambda, \mu}.
\end{gather*}
The operator $E_{\lambda, \mu}^{(m)}$ satisfies, for any $g\in \bf G$
\begin{gather}\label{covEm}
E_{\lambda, \mu}^{(m)}\circ \left(\pi_\lambda(g) \otimes \pi'_\mu(g) \right)= \left(\pi_{\lambda+m}(g) \otimes \pi'_{\mu+m}(g)\right)\circ E_{\lambda, \mu}^{(m)}.
\end{gather}
Let
\begin{gather*}
B_{k;\lambda, \mu}^{(m)}= \widetilde \Psi^{(k)} \circ E_{\lambda, \mu}^{(m)}.
\end{gather*}

\begin{prop} \quad
\begin{enumerate}\itemsep=0pt
\item[$(i)$] The operators $B_{k;\lambda, \mu}^{(m)} \colon C^\infty\big(\mathbb R^n\times \mathbb R^n, \mathbb S\otimes \mathbb S'\big)\rightarrow C^\infty(\mathbb R^n, \Lambda^*_k(\mathbb R^n)\otimes \nobreak \mathbb C)$ are constant coefficient bi-differential operators and homogeneous of degree $2m$.

\item[$(ii)$] For any $g\in \mathbf G$
\begin{gather*}
B_{k;\lambda, \mu}^{(m)}\circ \big(\pi_\lambda(g) \otimes \pi'_\mu(g)\big)= \pi_{k;\lambda+\mu+2m}(g)\circ B_{k;\lambda, \mu}^{(m)}.
\end{gather*}
\end{enumerate}
\end{prop}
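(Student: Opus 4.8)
The plan is to read off both assertions from three facts already established: the explicit shape of $E_{\lambda,\mu}$ in \eqref{Elambdamu}, the covariance \eqref{covEm} of $E_{\lambda,\mu}^{(m)}$, and the covariance of $\widetilde\Psi^{(k)}$ recorded just before the definition of $B_{k;\lambda,\mu}^{(m)}$; no new computation is needed. Statement $(ii)$ comes by chaining the two intertwining relations: since $B_{k;\lambda,\mu}^{(m)}=\widetilde\Psi^{(k)}\circ E_{\lambda,\mu}^{(m)}$, formula \eqref{covEm} gives $B_{k;\lambda,\mu}^{(m)}\circ\big(\pi_\lambda(g)\otimes\pi'_\mu(g)\big)=\widetilde\Psi^{(k)}\circ\big(\pi_{\lambda+m}(g)\otimes\pi'_{\mu+m}(g)\big)\circ E_{\lambda,\mu}^{(m)}$, and applying the covariance of $\widetilde\Psi^{(k)}$ with the shifted parameters $\lambda+m,\mu+m$ (whose sum is $\lambda+\mu+2m$) rewrites this as $\pi_{k;\lambda+\mu+2m}(g)\circ\widetilde\Psi^{(k)}\circ E_{\lambda,\mu}^{(m)}=\pi_{k;\lambda+\mu+2m}(g)\circ B_{k;\lambda,\mu}^{(m)}$.

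For $(i)$, the starting point is that every term of $E_{\lambda,\mu}$ in \eqref{Elambdamu} is a bi-differential operator on $\mathbb R^n\times\mathbb R^n$ whose $\End(\mathbb S\otimes\mathbb S')$-valued coefficient is a polynomial in $x-y$ alone (the coefficients occurring are $|x-y|^2$, the $x_j-y_j$, $\rho(x-y)$, $\rho'(x-y)$, and constants); equivalently $E_{\lambda,\mu}$ commutes with the diagonal translations $F(x,y)\mapsto F(x+v,y+v)$, as is also forced by the $\overline N$-part of the covariance in Theorem~\ref{covE1th}. This property is stable under composition, since Leibniz differentiation of a polynomial in $x-y$ produces polynomials in $x-y$; hence $E_{\lambda,\mu}^{(m)}=\sum_{\alpha,\beta}q_{\alpha,\beta}(x-y)\,\partial_x^\alpha\otimes\partial_y^\beta$ with each $q_{\alpha,\beta}$ an $\End(\mathbb S\otimes\mathbb S')$-valued polynomial. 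Since $\widetilde\Psi^{(k)}$ first applies the $x$-independent linear map $\Psi^{(k)}$ and then restricts to the diagonal $x=y$, only the value $q_{\alpha,\beta}(0)$ of each coefficient survives, so that $B_{k;\lambda,\mu}^{(m)}F(x)=\sum_{\alpha,\beta}\big(\Psi^{(k)}\circ q_{\alpha,\beta}(0)\big)\big((\partial_x^\alpha\partial_y^\beta F)(x,x)\big)$, which is manifestly a constant-coefficient bi-differential operator from $C^\infty(\mathbb R^n\times\mathbb R^n,\mathbb S\otimes\mathbb S')$ to $C^\infty(\mathbb R^n,\Lambda^*_k(\mathbb R^n)\otimes\mathbb C)$.

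To pin down the degree I would use the $A$-part of \eqref{covEm}: for $a_t\in A$ one has $\pi_\lambda(a_t)F(x)={\rm e}^{2t\lambda}F({\rm e}^{2t}x)$, so \eqref{covEm} specialized to $g=a_t$ reads $E_{\lambda,\mu}^{(m)}\circ\delta_r=r^{2m}\,\delta_r\circ E_{\lambda,\mu}^{(m)}$, where $\delta_rF(x,y)=F(rx,ry)$. As $\widetilde\Psi^{(k)}$ intertwines $\delta_r$ on $\mathbb R^n\times\mathbb R^n$ with $\delta_r$ on $\mathbb R^n$, this gives $B_{k;\lambda,\mu}^{(m)}\circ\delta_r=r^{2m}\,\delta_r\circ B_{k;\lambda,\mu}^{(m)}$, and a constant-coefficient bi-differential operator with this scaling behaviour is precisely one of the form $\sum_{|\alpha|+|\beta|=2m}c_{\alpha,\beta}\,\partial_x^\alpha\otimes\partial_y^\beta$, i.e.\ homogeneous of degree $2m$. (Equivalently, assign weight $-1$ to each factor $x_j-y_j$ and weight $+1$ to each derivative: every monomial of $E_{\lambda,\mu}$ has weight $2$, weights add under composition, and restriction to the diagonal discards all monomials of positive $(x-y)$-degree, leaving only pure derivatives of order $2m$.) I do not expect any substantial obstacle; the only point that calls for care is verifying that the two structural features of $E_{\lambda,\mu}$ used above — coefficients polynomial in $x-y$, and uniform weight $2$ — genuinely persist through the $m$-fold composition, which is routine Leibniz bookkeeping.
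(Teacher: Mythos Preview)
Your proof is correct and follows essentially the same approach as the paper: $(ii)$ is obtained by chaining the covariance of $E_{\lambda,\mu}^{(m)}$ with that of $\widetilde\Psi^{(k)}$, and $(i)$ is then deduced from $(ii)$ by specializing $g$ to translations (constant coefficients) and to dilations in $A$ (homogeneity of degree $2m$). The paper's proof is terser in that it invokes only the covariance route for $(i)$, whereas you additionally supply the direct structural argument (coefficients of $E_{\lambda,\mu}$ are polynomials in $x-y$, a feature preserved by composition and killed on the diagonal); this is harmless redundancy rather than a different method.
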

\begin{proof}
$(ii)$ is a direct consequence of the covariance property of the source operators and of the map $\widetilde \Psi^{(k)}$.

Next apply $(ii)$ to the case where $g$ is a translation by an element of $\mathbb R^n$. This implies that~$B^{(m)}_{k;\lambda,\mu}$ commutes with (diagonal) translations and hence has constant coefficients. Apply then to the case where $g$ belongs to $A$ acting by dilations of $\mathbb R^n$ to get the homogeneity of degree~$2m$ for the operator $B^{(m)}_{k;\lambda,\mu}$. This completes the proof of $(i)$.
\end{proof}

The definition of the SBDO $B_{k;\lambda, \mu}^{(m)}$ yields a recurrence formula for these operators. Use the covariance relation~\eqref{covEm} applied to diagonal translations on $\mathbb R^n\times \mathbb R^n$ to see that the coefficients of $E_{\lambda,\mu}^{(m)}$ are (operators valued)-functions of $(x-y)$. Let $\overset{o}{E}^{(m)}_{\lambda,\mu}$ be the constant coefficients part of $E_{\lambda,\mu}^{(m)}$.

\begin{prop} The SBDO $B_{k;\lambda,\mu}^{(m)}$ satisfies the recurrence relation
\begin{gather*}
B_{k;\lambda,\mu}^{(m)} = B_{k;\lambda+1,\mu+1}^{(m-1)}\circ E_{\lambda,\mu} .
\end{gather*}
\end{prop}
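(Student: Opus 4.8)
The plan is simply to unwind the two definitions and reassociate the composition; the recurrence is a bookkeeping identity and uses nothing beyond the way $E_{\lambda,\mu}^{(m)}$ and $B_{k;\lambda,\mu}^{(m)}$ were defined. First I would peel off the rightmost factor in the iterated source operator. By definition
\begin{gather*}
E_{\lambda,\mu}^{(m)} = E_{\lambda+m-1,\mu+m-1}\circ \cdots \circ E_{\lambda+1,\mu+1}\circ E_{\lambda,\mu},
\end{gather*}
and the composition of the first $m-1$ factors $E_{\lambda+m-1,\mu+m-1}\circ\cdots\circ E_{\lambda+1,\mu+1}$ is, after the substitution $\lambda\mapsto\lambda+1$, $\mu\mapsto\mu+1$, precisely $E_{\lambda+1,\mu+1}^{(m-1)}$: the top index matches since $E_{(\lambda+1)+(m-1)-1,(\mu+1)+(m-1)-1}=E_{\lambda+m-1,\mu+m-1}$, and the bottom index is $E_{\lambda+1,\mu+1}$. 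Hence $E_{\lambda,\mu}^{(m)} = E_{\lambda+1,\mu+1}^{(m-1)}\circ E_{\lambda,\mu}$.

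Next I would compose on the left with $\widetilde\Psi^{(k)}$ and apply the defining relations $B_{k;\lambda,\mu}^{(m)}=\widetilde\Psi^{(k)}\circ E_{\lambda,\mu}^{(m)}$ and $B_{k;\lambda+1,\mu+1}^{(m-1)}=\widetilde\Psi^{(k)}\circ E_{\lambda+1,\mu+1}^{(m-1)}$ to obtain
\begin{gather*}
B_{k;\lambda,\mu}^{(m)} = \widetilde\Psi^{(k)}\circ E_{\lambda+1,\mu+1}^{(m-1)}\circ E_{\lambda,\mu} = B_{k;\lambda+1,\mu+1}^{(m-1)}\circ E_{\lambda,\mu},
\end{gather*}
which is the asserted recurrence. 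For the base case $m=1$ this reduces to $B_{k;\lambda,\mu}^{(1)}=\widetilde\Psi^{(k)}\circ E_{\lambda,\mu}$, consistent with the conventions $E_{\lambda,\mu}^{(0)}=\id$ and $B_{k;\lambda,\mu}^{(0)}=\widetilde\Psi^{(k)}$.

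There is essentially no obstacle here: the argument is purely formal, does not invoke the covariance property \eqref{covEm} nor the fact that the $E_{\lambda,\mu}$ are differential operators, and the only point demanding a moment's care is verifying that the parameter pattern $(\lambda+j-1,\mu+j-1)_{j=1}^{m}$, with its first term removed, coincides with the pattern defining $E^{(m-1)}$ based at $(\lambda+1,\mu+1)$ — which is immediate from the shift $j\mapsto j-1$.
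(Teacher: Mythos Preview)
Your proof is correct and follows essentially the same route as the paper: factor $E_{\lambda,\mu}^{(m)}=E_{\lambda+1,\mu+1}^{(m-1)}\circ E_{\lambda,\mu}$ directly from the definition, then compose on the left with $\widetilde\Psi^{(k)}$ and read off the two defining relations for $B$. The paper's proof also records the auxiliary observation $\widetilde\Psi^{(k)}\circ E_{\lambda,\mu}^{(m)}=\widetilde\Psi^{(k)}\circ\overset{o}{E}{}^{(m)}_{\lambda,\mu}$, but this is not actually invoked in the chain of equalities establishing the recurrence, so your omission of it is harmless.
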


\begin{proof}

All coefficients of the difference $E_{\lambda,\mu}^{(m)}-\overset{o}{E}^{(m)}_{\lambda,\mu}$ vanish on the diagonal of $\mathbb R^n\times \mathbb R^n$. Hence
\begin{gather*}
\widetilde \Psi^{(k)} \circ E_{\lambda,\mu}^{(m)} = \widetilde \Psi^{(k)} \circ \overset{o}{E}_{\lambda,\mu}^{(m)}.
\end{gather*}
Now
\begin{gather*}
E_{\lambda,\mu}^{(m)} = \left(E_{\lambda+m-1, \mu+m-1}\circ \dots \circ E_{\lambda+1,\mu+1} \right)\circ E_{\lambda,\mu}
= E_{\lambda+1,\mu+1}^{(m-1)} \circ E_{\lambda, \mu}.
\end{gather*}
Hence
\begin{gather*}
B_{k;\lambda, \mu}^{(m)} = \widetilde \Psi^{(k)} \circ E_{\lambda+1, \mu+1}^{(m-1)}\circ E_{\lambda,\mu} = B_{k;\lambda+1,\mu+1}^{(m-1)}\circ E_{\lambda,\mu}.\tag*{\qed}
\end{gather*}
\renewcommand{\qed}{}
\end{proof}

\subsection{An example}

Let us write explicitly the SBDO for the case $k=0$ and $m=1$.

\begin{theor}
The operator
\begin{gather*}
B_{0;\lambda, \mu}^{(1)} \colon\ C^\infty(\mathbb R^n\times \mathbb R^n, \mathbb S\otimes \mathbb S') \to C^\infty(\mathbb R^n)
\end{gather*}
is given by
\begin{gather}
B_{0;\lambda, \mu}^{(1)}\big(v(\cdot)\otimes w'(\cdot)\big)(x) =
(2\mu-n+1) (2\mu+1)\big( \Delta v(x), w'(x)\big) \nonumber
\\ \hphantom{B_{0;\lambda, \mu}^{(1)}\big(v(\cdot)\otimes w'(\cdot)\big)(x) =}
{}+(2\lambda-n+1)(2\lambda+1)\big(v(x), \Delta w'(x)\big)\nonumber
\\ \hphantom{B_{0;\lambda, \mu}^{(1)}\big(v(\cdot)\otimes w'(\cdot)\big)(x) =}
{} -2(2\lambda-n+1)(2\mu-n+1)
 +\sum_{j=1}^n \bigg(\frac{\partial}{\partial x_j}v(x), \frac{\partial}{\partial y_j}w'(x)\bigg)\nonumber
 \\ \hphantom{B_{0;\lambda, \mu}^{(1)}\big(v(\cdot)\otimes w'(\cdot)\big)(x) =}
{}-2(2\lambda+2\mu-n+2)\big( \fsl{D}v(x), \fsl{D}'w'(x)\big).\label{B_0}
\end{gather}
The operator $B_{0;\lambda, \mu}^{(1)}$ satisfies, for any $g\in \mathbf G$
\begin{gather*}
B_{0;\lambda, \mu}^{(1)}\circ \big(\pi_\lambda(g) \otimes \pi_\mu'(g)\big) = \pi_{0;\lambda+\mu+2}(g) \circ B_{0;\lambda, \mu}^{(1)} .
\end{gather*}
\end{theor}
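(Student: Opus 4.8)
The plan is to unwind the definition $B_{0;\lambda,\mu}^{(1)} = \widetilde\Psi^{(0)}\circ E_{\lambda,\mu}^{(1)} = \widetilde\Psi^{(0)}\circ E_{\lambda,\mu}$ and evaluate it on a product $F(x,y) = v(x)\otimes w'(y)$. First I would make $\widetilde\Psi^{(0)}$ explicit: since $\gamma(\mathbf 1) = \mathbf 1$ and $\proj_0$ keeps the degree-$0$ component, $\Psi^{(0)}(s\otimes t')$ is just the duality pairing $(s,t')$, regarded as an element of $\Lambda^*_0(\mathbb R^n)\otimes\mathbb C\cong\mathbb C$; hence $\widetilde\Psi^{(0)}(F)(x)$ is obtained by applying the pairing $(\cdot,\cdot)$ to the diagonal value $F(x,x)$. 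Next, I would note that every term in the explicit expression \eqref{Elambdamu} of $E_{\lambda,\mu}$ whose coefficient is proportional to a component of $x-y$ or to $\rho(x-y)$ or $\rho'(x-y)$ vanishes after restriction to the diagonal; this is the same observation used in the proof of the recurrence formula. Hence only the six constant-coefficient terms of $E_{\lambda,\mu}$ contribute, and evaluating these on $v(x)\otimes w'(y)$, pairing and setting $y=x$ produces at once the $\Delta_x$-, $\Delta_y$- and $\sum_j\partial_{x_j}\otimes\partial_{y_j}$-summands of \eqref{B_0}.

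The one step that needs care is the reduction of the three remaining ``Dirac-type'' contributions, which after restriction to the diagonal read (all functions evaluated at $x$, with $\partial_j=\partial/\partial x_j$)
\begin{gather*}
2(2\lambda-n+1)\sum_{j=1}^n\big(\partial_j v,\rho'(e_j)\fsl{D}'w'\big)
\\
+2(2\mu-n+1)\sum_{j=1}^n\big(\rho(e_j)\fsl{D}v,\partial_j w'\big)
\\
-2\sum_{j=1}^n\big(\rho(e_j)\fsl{D}v,\rho'(e_j)\fsl{D}'w'\big),
\end{gather*}
and I would collapse this into a single multiple of $\big(\fsl{D}v,\fsl{D}'w'\big)$. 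For this I use the adjunction relation $(\rho(x)s,t') = -(s,\rho'(x)t')$ (valid for $x\in E$), the identity $\rho'(e_j)^2 = -\id$, and $\sum_j\rho(e_j)\partial_j = \fsl{D}$, $\sum_j\rho'(e_j)\partial_j = \fsl{D}'$. Moving $\rho'(e_j)$ into the first slot makes the first sum equal $-\big(\fsl{D}v,\fsl{D}'w'\big)$; the second sum likewise equals $-\big(\fsl{D}v,\fsl{D}'w'\big)$; and in the third sum, moving $\rho(e_j)$ across and using $\rho'(e_j)^2 = -\id$ shows each summand equals $\big(\fsl{D}v,\fsl{D}'w'\big)$, so that sum equals $n\big(\fsl{D}v,\fsl{D}'w'\big)$. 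Collecting the coefficients gives $-2\big[(2\lambda-n+1)+(2\mu-n+1)+n\big] = -2(2\lambda+2\mu-n+2)$, which is exactly the last summand of \eqref{B_0}.

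Finally, the asserted conformal covariance of $B_{0;\lambda,\mu}^{(1)}$ is simply the case $k=0$, $m=1$ of the covariance of the operators $B_{k;\lambda,\mu}^{(m)}$ already established. The only genuine pitfall is the sign bookkeeping in this last reduction — in particular keeping the identity $\rho'(e_j)^2 = -\id$ separate from the adjunction relation — so that is the computation I would verify most carefully.
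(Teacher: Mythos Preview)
Your proposal is correct and follows essentially the same route as the paper: identify $\Psi^{(0)}$ with the pairing, drop the terms of $E_{\lambda,\mu}$ involving $x-y$ on the diagonal, and reduce the three Dirac-type contributions to a single multiple of $(\fsl{D}v,\fsl{D}'w')$ via the adjunction relation. The only cosmetic difference is that for the sum $\sum_j(\rho(e_j)\fsl{D}v,\rho'(e_j)\fsl{D}'w')$ the paper invokes the general identity~\eqref{rhorho'3} at $k=0$, whereas you redo that special case by hand using $\rho'(e_j)^2=-\id$; the content is identical.
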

\begin{proof}
First notice that $\Psi^{(0)}\colon \mathbb S \otimes \mathbb S'\to \mathbb C$ is given by
\begin{gather*}
\Psi^{(0)} (v\otimes w') = (v,w'),
\end{gather*}
so that $\widetilde \Psi^{(0)} \colon C^\infty(\mathbb R^n \times \mathbb R^n,\mathbb S\otimes \mathbb S')\to C^\infty(\mathbb R^n)$, is given by
\begin{gather*}
\widetilde \Psi^{(0)}\big(v(\cdot)\otimes w'(\cdot)\big)(x) = \big(v(x),w'(x)\big) .
\end{gather*}
Now use~\eqref{Elambdamu} and observe that by~\eqref{1rhorho'1}
\begin{gather*}
\bigg(\frac{\partial}{\partial x_j} v(x), \rho'(e_j) \fsl{D}'w'(x) \bigg) = -\bigg(\rho(e_j) \frac{\partial}{\partial x_j} v(x), \fsl{D}'w'(x)\bigg)
\end{gather*}
so that
\begin{gather*}
\sum_{j=1}^n \bigg(\frac{\partial}{\partial x_j} v(x), \rho'(e_j) \fsl{D}'w'(x) \bigg) = -\big( \fsl{D} v(x), \fsl{D}'w'(x)\big),
\end{gather*}
and a similar result holds for $\sum_{j=1}^n \big( \rho(e_j) \fsl{D}v(x), \frac{\partial}{\partial y_j} w'(x) \big) $. Also use~\eqref{rhorho'3} for $k=0$ to get
\begin{gather*}
\Psi^{(0)}\bigg(\sum_{j=1}^n \big(\rho(e_j) \fsl{D}_x v(\cdot)\otimes \rho'(e_j) \fsl{D}'_y w'(\cdot) \big)\bigg)(x)=n \big( \fsl{D}_x v(x), \fsl{D}'_y w'(x)\big).
\end{gather*}
The final expression for $B_{0;\lambda,\mu}^{(1)}$ is obtained by putting together the partial computations.
\end{proof}

\subsection[The dimension n=1 and the classical Rankin--Cohen brackets]
{The dimension $\boldsymbol{n=1}$ and the classical Rankin--Cohen brackets}

Let $E=\mathbb R$ be the standard Euclidean space of dimension $n=1$ and denote by $e$ be the vector $1$ (to distinguish it from the scalar $1$). Let $\cl(E)$ the corresponding Clifford algebra which is isomorphic to the complex plane. Let $\ccl{(E)}$ be its complexification. The spin group $\Spin(E)$ is equal to $\{1,-1\}$.

Let $\mathbb S = \mathbb C$ and define for $v\in \mathbb S$ and $x\in \mathbb R$
\begin{gather*}
\rho(xe) v = {\rm i}\,xv
\end{gather*}
and extend it as an action of $\ccl(E)$ on $\mathbb S$, still denoted by $\rho$.
Similarly, let $\mathbb S'=\mathbb C$ and define for $w'\in \mathbb S'$ and $x \in \mathbb S'$
\begin{gather*}
\rho'(xe) w' = -{\rm i}\, xw'.
\end{gather*}
Through the duality on $(\mathbb S, \mathbb S') $ given by $(v,w')\mapsto vw'$, $(\rho',\mathbb S')$ is the dual Clifford module of~$(\rho, \mathbb S)$, i.e., for $x\in \mathbb R$ and $v\in \mathbb S, w'\in \mathbb S'$
\begin{gather*}
(\rho(x)v,w') = -(v,\rho'(x)w').
\end{gather*}
The corresponding Dirac operators are
\begin{gather*}
\fsl{D} = {\rm i} \frac{{\rm d}}{{\rm d} x}\,,\qquad
\fsl{D'} = -{\rm i} \frac{{\rm d}}{{\rm d} x}\, .
\end{gather*}
For a smooth $\mathbb S$-valued (resp.~$\mathbb S'$-valued) function $v(x)$ (resp.~$w'(x)$),
\begin{gather*}
\big(\fsl{D} v(x), \fsl{D'} w'(y)\big)= \frac{{\rm d}v}{{\rm d}x} (x)\,\frac{{\rm d}w'}{{\rm d}y}(y).
\end{gather*}
By substituting these results in~\eqref{B_0}, one obtains
\begin{gather}\label{B_0dim1}
B_{0;\lambda, \mu}^{(1)} = 2\mu(2\mu+1)\frac{\partial^2}{\partial x^2}+2\lambda(2\lambda+1) \frac{\partial^2}{\partial y^2}-2(2\lambda+1)(2\mu+1) \frac{\partial^2}{\partial x\partial y}
\end{gather}
and this coincides (up to a constant multiple) to the degree two Rankin--Cohen operator for the group $\mathrm{SL}(2,\mathbb R)$ which is isomorphic to $\Spin_0(1,2)$. See \cite[Theorem~10.7]{bck1} and~\cite{kp16} for more general results in this direction.

\subsection*{Acknowledgements}
 At the very beginning of the present work, the first author benefited from a discussion with Bent {\O}rsted during a visit at Aarhus University and wishes to thank him and its institution for the invitation. The authors are very grateful to the anonymous referees for their expert comments and suggestions which helped to improve the initial version of this article.

\pdfbookmark[1]{References}{ref}
\LastPageEnding
\end{document}